\author{Bryce Clarke}
\thanks{}
\address{Inria Saclay, Palaiseau, France}
\title{Lifting twisted coreflections against delta lenses}
\keywords{algebraic weak factorisation system, coreflection, double category, lens, lifting, opfibration}
\newtheorem{theorem}{Theorem}
\newtheorem{proposition}[theorem]{Proposition}
\newtheorem{lemma}[theorem]{Lemma}
\newtheorem{corollary}[theorem]{Corollary}
\theoremstyle{definition} 
\newtheorem{definition}[theorem]{Definition}
\newtheorem{example}[theorem]{Example}
\newtheorem{construction}[theorem]{Construction}
\theoremstyle{remark}
\newtheorem{remark}[theorem]{Remark}
\newtheorem{notation}[theorem]{Notation}
\newcommand{\Cat}{\mathrm{\mathcal{C}at}}
\newcommand{\CAT}{\mathrm{\mathcal{C}AT}}
\newcommand{\rlp}{\mathrm{\mathcal{R}LP}}
\newcommand{\llp}{\mathrm{\mathcal{L}LP}}
\newcommand{\Lens}{\mathrm{\mathcal{L}ens}}
\newcommand{\TwCoref}{\mathrm{\mathcal{T}wCoref}}
\newcommand{\SOpf}{\mathrm{\mathcal{S}Opf}}
\newcommand{\Coref}{\mathrm{\mathcal{C}oref}}
\newcommand{\DOpf}{\mathrm{\mathcal{D}Opf}}
\newcommand{\IFun}{\mathrm{\mathcal{I}Fun}}
\newcommand{\C}{\mathcal{C}}
\newcommand{\D}{\mathcal{D}}
\renewcommand{\L}{\mathcal{L}}
\newcommand{\R}{\mathcal{R}}
\newcommand{\CC}{\mathbb{C}}
\newcommand{\DD}{\mathbb{D}}
\newcommand{\RR}{\mathbb{R}}
\newcommand{\LL}{\mathbb{L}}
\newcommand{\JJ}{\mathbb{J}}
\newcommand{\VV}{\mathbb{V}}
\newcommand{\SQ}{\mathrm{\mathbb{S}q}}
\newcommand{\RLP}{\mathrm{\mathbb{R}LP}}
\newcommand{\LLP}{\mathrm{\mathbb{L}LP}}
\newcommand{\LENS}{\mathrm{\mathbb{L}ens}}
\newcommand{\TWCOREF}{\mathrm{\mathbb{T}wCoref}}
\newcommand{\SOPF}{\mathrm{\mathbb{S}Opf}}
\newcommand{\COREF}{\mathrm{\mathbb{C}oref}}
\newcommand{\DOPF}{\mathrm{\mathbb{D}Opf}}
\newcommand{\IFUN}{\mathrm{\mathbb{I}Fun}}
\DeclareMathOperator{\dom}{dom}
\DeclareMathOperator{\cod}{cod}
\DeclareMathOperator{\id}{id}
\DeclareMathOperator{\lens}{lens}
\DeclareMathOperator{\dopf}{dopf}
\DeclareMathOperator{\sopf}{sopf}
\DeclareMathOperator{\can}{can}
\newcommand{\one}{\mathbf{1}}
\newcommand{\two}{\mathbf{2}}
\newcommand{\three}{\mathbf{3}}
\newcommand{\qbar}{\overline{q}}
\newcommand{\pto}{}
\newcommand{\pgets}{}
\DeclareRobustCommand{\pto}{\mathrel{\mathpalette\p@to@gets\to}}
\DeclareRobustCommand{\pgets}{\mathrel{\mathpalette\p@to@gets\gets}}
\newcommand{\p@to@gets}[2]{%
  \ooalign{\hidewidth$\m@th#1\mapstochar\mkern5mu$\hidewidth\cr$\m@th#1\to$\cr}%
}
\renewcommand{\nrightarrow}{\pto}
\begin{document}

\maketitle
\begin{abstract}
Delta lenses are functors equipped with a suitable choice of lifts, generalising the notion of split opfibration.
In recent work, delta lenses were characterised as the right class of an algebraic weak factorisation system.
In this paper, we show that this algebraic weak factorisation system is cofibrantly generated by a small double category, and characterise the left class as split coreflections with a certain property; we call these twisted coreflections.
We demonstrate that every twisted coreflection arises as a pushout of an initial functor from a discrete category along a bijective-on-objects functor.
Throughout the article, we take advantage of a reformulation of algebraic weak factorisation systems, due to Bourke, based on double-categorical lifting operations. 
\end{abstract}


\section*{Introduction}

Delta lenses were introduced in 2011 by Diskin, Xiong, and Czarnecki \cite{DiskinXiongCzarnecki2011} as a framework for bidirectional transformations \cite{AbouSalehCheneyGibbonsMcKinnaStevens2018}. 
Johnson and Rosebrugh \cite{JohnsonRosebrugh2013} initiated the study of delta lenses using category theory, and there has since been a growing body of research about their properties and structure \cite{AhmanUustalu2017,Chollet2022,Clarke2020,Clarke2020b,Clarke2021,Clarke2022,DiMeglio2022,DiMeglio2023,JohnsonRosebrugh2016}.

One of the motivations for examining delta lenses is their close relationship with split (Grothendieck) opfibrations. 
Both delta lenses and split opfibrations are defined as functors equipped with a functorial choice of lifts, the key difference being that split opfibrations require these lifts to satisfy a universal property.
Given that delta lenses directly generalise split opfibrations, it is often interesting and fruitful to explore the connections between them, and discover new ways in which the theory of one informs the theory of the other. 

The notion of an algebraic weak factorisation system (\textsc{awfs}), first introduced by Grandis and Tholen \cite{GrandisTholen2006} and later refined by Garner \cite{Garner2009}, generalises the notion of an orthogonal factorisation system (\textsc{ofs}) on a category.
In the definition of an \textsc{awfs} on a category $\C$, the left and right classes of morphisms are determined by the categories of $L$-coalgebras and $R$-algebras for a suitable comonad-monad pair $(L, R)$ defined on the arrow category $\C^{\two}$.
An \textsc{ofs} may be understood as an \textsc{awfs} in which the comonad and monad are idempotent.  

A leading example of an algebraic weak factorisation system is the \textsc{awfs} on $\Cat$ whose $L$-coalgebras are the split coreflections (functors equipped with a right-adjoint-left-inverse) and whose $R$-algebras are the split opfibrations \cite[Section~4.4]{GrandisTholen2006}.
Motivated, in part, by this example, we defined an \textsc{awfs} on $\Cat$ whose $R$-algebras are the delta lenses \cite{Clarke2023}. 
However, while this \textsc{awfs} resolved several aspects of the theory of delta lenses, a clear understanding of the corresponding $L$-coalgebras remained elusive until now. 

\subsection*{Twisted coreflections}
The primary contribution of this work is a characterisation of the $L$-coalgebras corresponding to the \textsc{awfs} on $\Cat$ whose $R$-algebras are delta lenses.
We show that an $L$-coalgebra is a split coreflection with a certain unfamiliar property (Proposition~\ref{proposition:twcoref-LLP-lens} and Corollary~\ref{corollary:comonadicity}); we call such a split coreflection a \emph{twisted coreflection}. 

A twisted coreflection $(f \dashv q, \varepsilon)$ consists of a split coreflection 
\begin{equation*}
    \begin{tikzcd}[column sep = large]
        A 
        \arrow[r, hook, shift right = 3, "f"']
        \arrow[r, phantom, "\dashv"{rotate=90}]
        & 
        B 
        \arrow[l, shift right = 3, "q"']
    \end{tikzcd}
    \qquad \qquad
    \varepsilon \colon fq \Rightarrow 1_{B}
    \qquad \qquad
    qf = 1_{A}
\end{equation*}
such that if the image of a morphism $u \colon x \rightarrow y$ in $B$ under the right adjoint $q$ is \emph{not} an identity morphism (i.e. $qu \neq 1$), then there exists a unique morphism $\hat{u} \colon x \rightarrow fqx$ such that $\hat{u} \circ \varepsilon_{x} = 1_{fqx}$ and $u = \varepsilon_{y} \circ fqu \circ \hat{u}$, as depicted in the naturality square below. 
\begin{equation*}
    \begin{tikzcd}[column sep = large]
        fqx 
        \arrow[r, "fqu \, \neq \, 1"]
        \arrow[d, shift right = 2, "\varepsilon_{x}"']
        & 
        fqy 
        \arrow[d, "\varepsilon_{y}"]
        \\
        x 
        \arrow[u, shift right = 2, dashed, "\exists! \, \hat{u}"']
        \arrow[r, "u"']
        & 
        y
    \end{tikzcd}
\end{equation*}
The name ``twisted coreflection'' was chosen for the reason that certain naturality squares, as shown above, yield morphisms in the so-called \emph{twisted arrow category} of $B$ \cite{Linton1965}.

At first glance, the definition of a twisted coreflection appears to be quite unusual, however we show that there is a natural characterisation via pushouts (Theorem~\ref{theorem:twisted-coreflection}). 
Let $\iota_{A} \colon A_{0} \rightarrow A$ denote the identity-on-objects inclusion of the discrete category $A_{0}$ into~$A$.
Given a split coreflection $(f \dashv q, \varepsilon)$, we may construct the following pair of commutative diagrams in $\Cat$. 
A twisted coreflection is precisely a split coreflection such that the right-hand diagram below is a pushout, that is, such that $A_{0} \times_{A} B$ is the pushout complement~\cite{LackSobocinski2004} of the pair $(\iota_{A}, f)$ in $\Cat$.
There is also a split coreflection $(f' \dashv q', \varepsilon')$.
\begin{equation*}
    \begin{tikzcd}
        A_{0}
        \arrow[r, "\iota_{A}"]
        & 
        A 
        \arrow[ld, phantom, "\urcorner" very near end]
        \\
        A_{0} \times_{A} B 
        \arrow[r, "\pi"']
        \arrow[u, "q'"]
        & 
        B 
        \arrow[u, "q"']
    \end{tikzcd}
    \qquad \qquad \qquad 
    \begin{tikzcd}
        A_{0}
        \arrow[r, "\iota_{A}"]
        \arrow[d, "{f' \, = \, \langle 1, \, f\iota_{A} \rangle}"', hook]
        & 
        A 
        \arrow[d, "f", hook]
        \\
        A_{0} \times_{A} B
        \arrow[r, "\pi"']
        & 
        B
    \end{tikzcd}
\end{equation*}

This characterisation of twisted coreflections is built upon an explicit construction of the pushout of a fully faithful functor from a discrete category along a bijective-on-objects functor (Construction~\ref{construction:pushout}). 
These pushouts are especially well-behaved as every morphism may be decomposed into at most three generators. 
The construction may be also seen as a special case of the coequaliser of a pair of functors from a discrete category \cite[Section~3]{BorceuxCampaniniGranTholen2023}. 

What are the examples of twisted coreflections? 
Given a category $A$, for each object $a \in A$, choose a category $F_{a}$ with an initial object $0_{a} \in F_{a}$. 
Let $X = \sum_{a \in A_{0}} F_{a}$, and let $f \colon A_{0} \rightarrow X$ denote the \emph{initial} functor that selects the initial object in each connected component of $X$, that is, $fa = 0_{a}$. 
Then taking the pushout of $f$ along $\iota_{A}$ glues each category $F_{a}$ to $A$ via the identification $a \sim 0_{a}$, yielding a category $B$ and a twisted coreflection from $A$ to $B$ (Proposition~\ref{proposition:twisted-coreflection}). 
Remarkably, \emph{every} twisted coreflection arises in this way, that is, as a pushout of an initial functor from a discrete category along an identity-on-objects inclusion. 

\subsection*{Double categories and lifting}
Another aim of this work is to place the \textsc{awfs} of twisted coreflections and delta lenses naturally into the setting of double categories, where the notion of lifting a twisted coreflection against a delta lens is the central focus. 
Our motivation comes from the desire to characterise twisted coreflections as precisely those functors which lift against delta lenses, rather than just as coalgebras for a comonad. 

For each \textsc{awfs} $(L, R)$ on a category $\C$, there exists a pair of thin double categories $L$-$\mathrm{\mathbb{C}oalg}$ and $R$-$\mathrm{\mathbb{A}lg}$ whose objects and horizontal morphisms come from $\C$, and whose vertical morphisms are the $L$-coalgebras and $R$-algebras, respectively \cite{Riehl2011}. 
The \textsc{awfs} determines a \emph{lifting operation} \cite{BourkeGarner2016a} on the cospan 
\[
    L\text{-}\mathrm{\mathbb{C}oalg}
    \longrightarrow 
    \SQ(\C)
    \longleftarrow 
    R\text{-}\mathrm{\mathbb{A}lg}
\]
of forgetful double functors to the double category $\SQ(\C)$ of commutative squares  in $\C$.
The lifting operation associates to each commutative square
\begin{equation*}
    \begin{tikzcd}[column sep = large]
        A 
        \arrow[r, "s"]
        \arrow[d, "f"']
        & 
        B 
        \arrow[d, "g"]
        \\
        C 
        \arrow[r, "t"']
        \arrow[ru, dotted, "{\varphi_{f, \, g}(s, \, t)}" description]
        & 
        D
    \end{tikzcd}
\end{equation*}
in $\C$ such that $f$ is an $L$-coalgebra and $g$ is an $R$-algebra, a canonical diagonal lift $\varphi_{f, \, g}(s, t)$ such that $\varphi_{f, \, g}(s, t) \circ f = s$ and $g \circ \varphi_{f, \, g}(s, t) = t$. 
These lifts are compatible with the horizontal and vertical structure of $L$-$\mathrm{\mathbb{C}oalg}$ and $R$-$\mathrm{\mathbb{A}lg}$, and provide a structured version of the orthogonality property of left class against the right class in an \textsc{ofs} \cite{FreydKelly1972}. 

Recently, Bourke \cite{Bourke2023} demonstrated that an \textsc{awfs} can be defined entirely in terms of a pair of double categories $\LL$ and $\RR$ over $\SQ(\C)$ equipped with a lifting operation that satisfies two axioms; this formulation is called a \emph{lifting} \textsc{awfs}. 
A key benefit of this approach is that it uses lifting as the foundation for an \textsc{awfs}, rather than a suitable comonad-monad pair, thus providing a clear parallel with the definition of an \textsc{ofs}. 

Adopting this approach, we introduce the thin double categories $\TWCOREF$ and $\LENS$ over $\SQ(\Cat)$ whose vertical morphisms are twisted coreflections and delta lenses, respectively.
\begin{equation*}
    \begin{tikzcd}
    \TWCOREF
    \arrow[r, "U"] 
    &
    \SQ(\Cat)
    &
    \arrow[l, "V"']
    \LENS 
    \end{tikzcd}
\end{equation*}
In the main theorem of the paper, we show that this cospan admits a lifting operation that determines an \textsc{awfs} on $\Cat$ (Theorem~\ref{theorem:main}).
The two axioms of an \textsc{awfs} placed on the lifting operation ensure that every functor factorises as a cofree twisted coreflection followed by a free delta lens, and that twisted coreflections are precisely the functors that lift against delta lenses, and vice versa. 

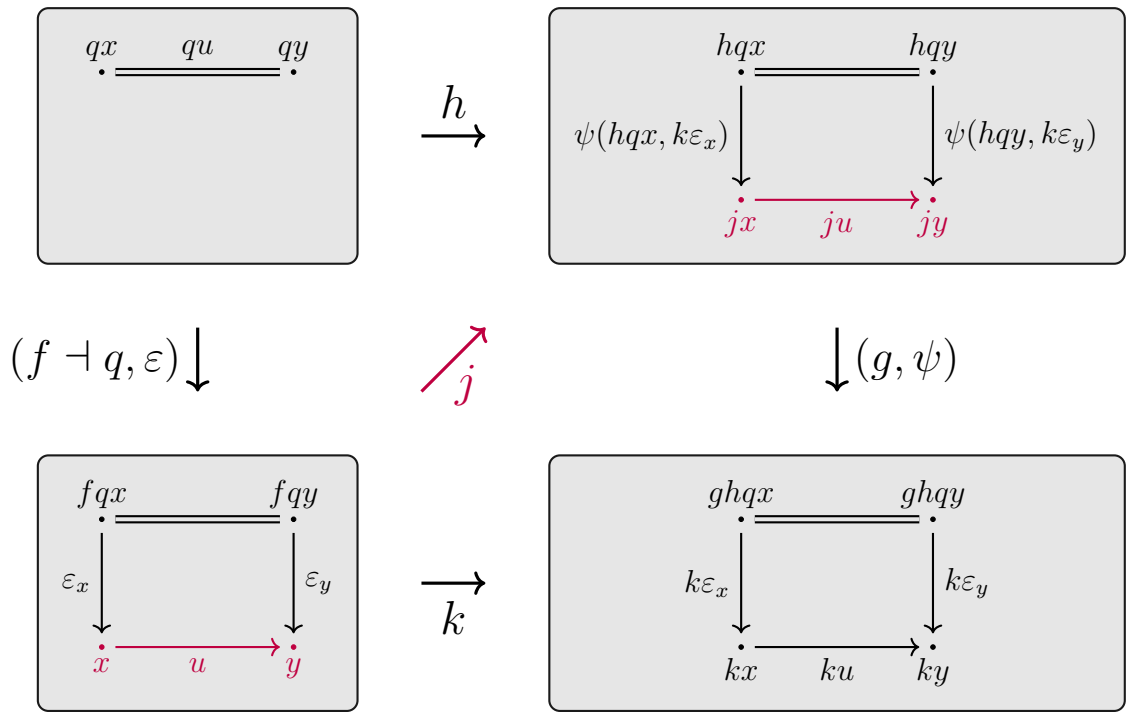
\begin{figure}[p]
    \begin{center}
    \begin{tikzpicture}[scale=0.85]
    \filldraw[fill=black!10!white, draw=black!90!white, thick, rounded corners] (0,-2) -- (0,2) -- (5,2) -- (5,-2) -- cycle;
    \filldraw[fill=black!10!white, draw=black!90!white, thick, rounded corners] (0,-9) -- (0,-5) -- (5,-5) -- (5,-9) -- cycle;
    \filldraw[fill=black!10!white, draw=black!90!white, thick, rounded corners] (8,-2) -- (8,2) -- (17,2) -- (17,-2) -- cycle;
    \filldraw[fill=black!10!white, draw=black!90!white, thick, rounded corners] (8,-9) -- (8,-5) -- (17,-5) -- (17,-9) -- cycle;
    \draw[->, very thick, black] (6,0) -- node[above, scale=1.5] {$h$} (7,0);
    \draw[->, very thick, black] (6,-7) -- node[below, scale=1.5] {$k$} (7,-7);
    \draw[->, very thick, black] (2.5,-3) -- node[left, scale=1.4] {$(f \dashv q, \varepsilon)$} (2.5,-4);
    \draw[->, very thick, black] (12.5,-3) -- node[right, scale=1.4] {$(g, \psi)$} (12.5,-4);
    \draw[->, very thick, purple] (6, -4) -- node[below, scale = 1.5, pos = 0.7] {$j$} (7, -3);
    \filldraw[black] (1, 1) circle [radius=1pt];
    \filldraw[black] (4, 1) circle [radius=1pt];
    \draw[thick, shorten <= 1ex, shorten >= 1ex, double equal sign distance, double = black!10!white] (1, 1) -- node[above] {$qu$} (4, 1);
    \node[above] at (1, 1) {$qx$};
    \node[above] at (4, 1) {$qy$};
    \filldraw[black] (1, -6) circle [radius=1pt];
    \filldraw[black] (4, -6) circle [radius=1pt];
    \filldraw[purple] (1, -8) circle [radius=1pt];
    \filldraw[purple] (4, -8) circle [radius=1pt];
    \draw[->, thick, shorten <= 1ex, shorten >= 1ex] (1, -6) -- node[left] {$\varepsilon_{x}$} (1, -8);
    \draw[->, thick, shorten <= 1ex, shorten >= 1ex] (4, -6) -- node[right] {$\varepsilon_{y}$} (4, -8);
    \draw[->, thick, shorten <= 1ex, shorten >= 1ex, purple] (1, -8) -- node[below] {$u$} (4, -8);
    \draw[thick, shorten <= 1ex, shorten >= 1ex, double equal sign distance, double = black!10!white] (1, -6) -- (4, -6);
    \node[above] at (1, -6) {$fqx$};
    \node[above] at (4, -6) {$fqy$};
    \node[below, purple] at (1, -8) {$x$};
    \node[below, purple] at (4, -8) {$y$};
    \filldraw[black] (11, 1) circle [radius=1pt];
    \filldraw[black] (14, 1) circle [radius=1pt];
    \filldraw[purple] (11, -1) circle [radius=1pt];
    \filldraw[purple] (14, -1) circle [radius=1pt];
    \draw[thick, shorten <= 1ex, shorten >= 1ex, double equal sign distance, double = black!10!white] (11, 1) -- (14, 1);
    \draw[->, thick, shorten <= 1ex, shorten >= 1ex] (11, 1) -- node[left] {$\psi(hqx, k\varepsilon_{x})$} (11, -1);
    \draw[->, thick, shorten <= 1ex, shorten >= 1ex] (14, 1) -- node[right] {$\psi(hqy, k\varepsilon_{y})$}(14, -1);
    \draw[->, thick, shorten <= 1ex, shorten >= 1ex, purple] (11, -1) -- node[below, purple] {$ju$}(14, -1);
    \node[above] at (11, 1) {$hqx$};
    \node[above] at (14, 1) {$hqy$};
    \node[below, purple] at (11, -1) {$jx$};
    \node[below, purple] at (14, -1) {$jy$};
    \filldraw[black] (11, -6) circle [radius=1pt];
    \filldraw[black] (14, -6) circle [radius=1pt];
    \filldraw[black] (11, -8) circle [radius=1pt];
    \filldraw[black] (14, -8) circle [radius=1pt];
    \draw[thick, shorten <= 1ex, shorten >= 1ex, double equal sign distance, double = black!10!white] (11, -6) -- (14, -6);
    \draw[->, thick, shorten <= 1ex, shorten >= 1ex] (11, -6) -- node[left] {$k\varepsilon_{x}$} (11, -8);
    \draw[->, thick, shorten <= 1ex, shorten >= 1ex] (14, -6) -- node[right] {$k\varepsilon_{y}$} (14, -8);
    \draw[->, thick, shorten <= 1ex, shorten >= 1ex] (11, -8) -- node[below] {$ku$} (14, -8);
    \node[above] at (11, -6) {$ghqx$};
    \node[above] at (14, -6) {$ghqy$};
    \node[below] at (11, -8) {$kx$};
    \node[below] at (14, -8) {$ky$};
    \end{tikzpicture}
    \end{center}
    \caption{Lifting a twisted coreflection $(f \dashv q, \varepsilon)$ against a delta lens~$(g, \psi)$. 
    If we have $q(u \colon x \rightarrow y) = 1$, then $ju = \psi(jx, ku)$.}
    \label{figure:qu-equals-id}
    \end{figure}
    
    \begin{figure}[p]
    \begin{center}
    \begin{tikzpicture}[scale=0.85]
    \filldraw[fill=black!10!white, draw=black!90!white, thick, rounded corners] (0,-2) -- (0,2) -- (5,2) -- (5,-2) -- cycle;
    \filldraw[fill=black!10!white, draw=black!90!white, thick, rounded corners] (0,-9) -- (0,-5) -- (5,-5) -- (5,-9) -- cycle;
    \filldraw[fill=black!10!white, draw=black!90!white, thick, rounded corners] (8,-2) -- (8,2) -- (17,2) -- (17,-2) -- cycle;
    \filldraw[fill=black!10!white, draw=black!90!white, thick, rounded corners] (8,-9) -- (8,-5) -- (17,-5) -- (17,-9) -- cycle;
    \draw[->, very thick, black] (6,0) -- node[above, scale=1.5] {$h$} (7,0);
    \draw[->, very thick, black] (6,-7) -- node[below, scale=1.5] {$k$} (7,-7);
    \draw[->, very thick, black] (2.5,-3) -- node[left, scale=1.4] {$(f \dashv q, \varepsilon)$} (2.5,-4);
    \draw[->, very thick, black] (12.5,-3) -- node[right, scale=1.4] {$(g, \psi)$} (12.5,-4);
    \draw[->, very thick, purple] (6, -4) -- node[below, scale = 1.5, pos = 0.7] {$j$} (7, -3);
    \filldraw[black] (1, 1) circle [radius=1pt];
    \filldraw[black] (4, 1) circle [radius=1pt];
    \draw[->, thick, shorten <= 1ex, shorten >= 1ex] (1, 1) -- node[above] {$qu$} (4, 1);
    \node[above] at (1, 1) {$qx$};
    \node[above] at (4, 1) {$qy$};
    \filldraw[black] (1, -6) circle [radius=1pt];
    \filldraw[black] (4, -6) circle [radius=1pt];
    \filldraw[purple] (1, -8) circle [radius=1pt];
    \filldraw[purple] (4, -8) circle [radius=1pt];
    \draw[->, thick, shorten <= 1ex, shorten >= 1ex, xshift = -1ex] (1, -6) -- node[left] {$\varepsilon_{x}$} (1, -8);
    \draw[<-, dashed, thick, shorten <= 1ex, shorten >= 1ex, xshift = 1ex] (1, -6) -- node[right] {$\exists! \, \hat{u}$} (1, -8);
    \draw[->, thick, shorten <= 1ex, shorten >= 1ex] (4, -6) -- node[right] {$\varepsilon_{y}$}(4, -8);
    \draw[->, thick, shorten <= 1ex, shorten >= 1ex, purple] (1, -8) -- node[below] {$u$} (4, -8);
    \draw[->, thick, shorten <= 1ex, shorten >= 1ex] (1, -6) -- node[above] {$fqu$} (4, -6);
    \node[above, yshift=2pt] at (1, -6) {$fqx$};
    \node[above, yshift=2pt] at (4, -6) {$fqy$};
    \node[below, purple] at (1, -8) {$x$};
    \node[below, purple] at (4, -8) {$y$};
    \filldraw[black] (11, 1) circle [radius=1pt];
    \filldraw[black] (14, 1) circle [radius=1pt];
    \filldraw[purple] (11, -1) circle [radius=1pt];
    \filldraw[purple] (14, -1) circle [radius=1pt];
    \draw[->, thick, shorten <= 1ex, shorten >= 1ex] (11, 1) -- node[above] {$hqu$} (14, 1);
    \draw[->, thick, shorten <= 1ex, shorten >= 1ex, xshift = -1ex] (11, 1) -- node[left] {$\psi(hqx, k\varepsilon_{x})$} (11, -1);
    \draw[<-, thick, shorten <= 1ex, shorten >= 1ex, xshift = 1ex] (11, 1) -- node[right] {$\psi(jx, k \hat{u})$}(11, -1);
    \draw[->, thick, shorten <= 1ex, shorten >= 1ex] (14, 1) -- node[right] {$\psi(hqy, k\varepsilon_{y})$} (14, -1);
    \draw[->, thick, shorten <= 1ex, shorten >= 1ex, purple] (11, -1) -- node[below, purple] {$ju$} (14, -1);
    \node[above] at (11, 1) {$hqx$};
    \node[above] at (14, 1) {$hqy$};
    \node[below, purple] at (11, -1) {$jx$};
    \node[below, purple] at (14, -1) {$jy$};
    \filldraw[black] (11, -6) circle [radius=1pt];
    \filldraw[black] (14, -6) circle [radius=1pt];
    \filldraw[black] (11, -8) circle [radius=1pt];
    \filldraw[black] (14, -8) circle [radius=1pt];
    \draw[->, thick, shorten <= 1ex, shorten >= 1ex] (11, -6) --  node[above] {$ghqu$} (14, -6);
    \draw[->, thick, shorten <= 1ex, shorten >= 1ex, xshift = -1ex] (11, -6) --  node[left] {$k\varepsilon_{x}$} (11, -8);
    \draw[<-, thick, shorten <= 1ex, shorten >= 1ex, xshift = 1ex] (11, -6) --  node[right] {$k \hat{u}$} (11, -8);
    \draw[->, thick, shorten <= 1ex, shorten >= 1ex] (14, -6) --  node[right] {$k\varepsilon_{y}$} (14, -8);
    \draw[->, thick, shorten <= 1ex, shorten >= 1ex] (11, -8) -- node[below] {$ku$} (14, -8);
    \node[above] at (11, -6) {$ghqx$};
    \node[above] at (14, -6) {$ghqy$};
    \node[below] at (11, -8) {$kx$};
    \node[below] at (14, -8) {$ky$};
    \end{tikzpicture}
    \end{center}
    \caption{Lifting a twisted coreflection $(f \dashv q, \varepsilon)$ against a delta lens~$(g, \psi)$.
    If we have $q(u \colon x \rightarrow y) \neq 1$, then $ju = \psi(hqy, k\varepsilon_{y}) \circ hqu \circ \psi(jx, k \hat{u})$.}
    \label{figure:qu-neq-id}
    \end{figure}
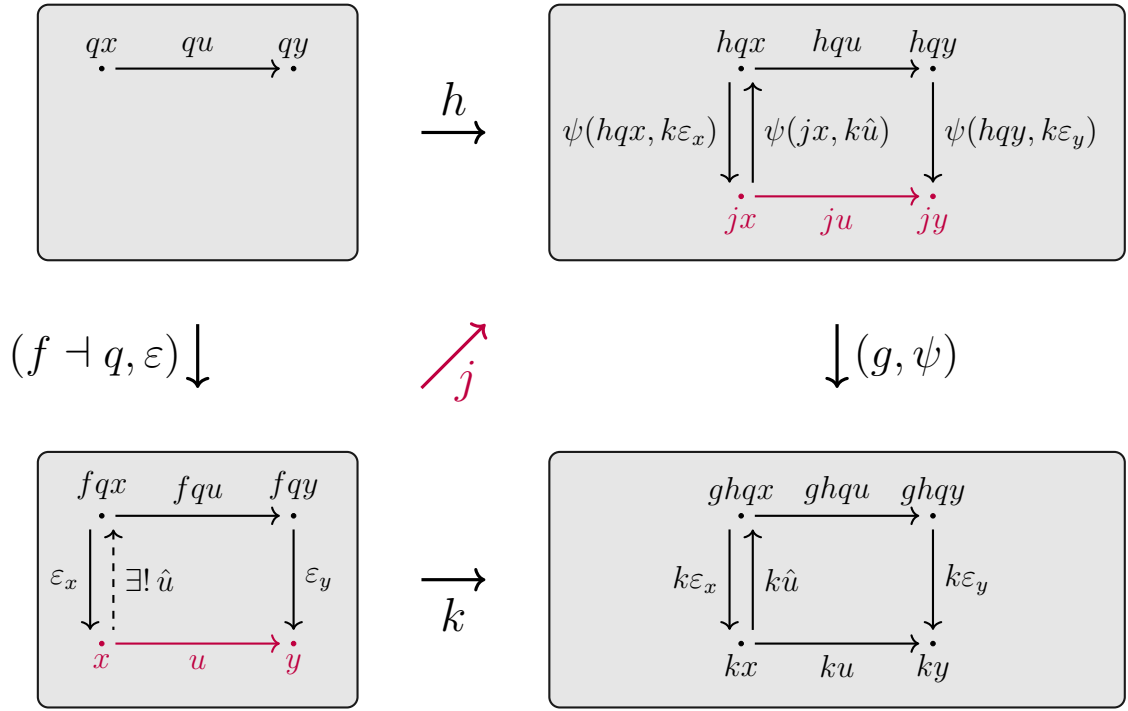    
    
Although it is possible to use the basic definitions of twisted coreflection (Definition~\ref{definition:twisted-coreflection}) and delta lens (Definition~\ref{definition:delta-lens}) to construct the lifting operation explicitly, as illustrated in Figure~\ref{figure:qu-equals-id} and Figure~\ref{figure:qu-neq-id}, checking functoriality of the lift as well as the horizontal and vertical compatibilities is quite tedious. 
Instead, we use the ``diagrammatic'' presentations of twisted coreflections (Proposition~\ref{proposition:twisted-coreflection}) 
and delta lenses (Lemma~\ref{lemma:diagrammatic-lens}) 
to construct the lifting operation via basic universal properties (Proposition~\ref{proposition:lifting-square}).

\subsection*{Cofibrant generation by a double category}

From a global perspective, delta lenses are exactly the functors that admit coherent chosen lifts against twisted coreflections; this may be summarised concisely as an isomorphism of double categories $\LENS \cong \RLP(\TWCOREF)$ in the notation of Section~\ref{subsec:double-categories-of-lifts}. 
However, from a local perspective, a delta lens is a functor that admits a chosen lift against the functor $\delta_{1} \colon \one \rightarrow \two$, as shown below, subject to two axioms. 
How do we reconcile these perspectives? 
\begin{equation*}
        \begin{tikzcd}[/tikz/column 1/.append style={anchor=base west}]
            \{0\} 
            \arrow[r, "a"]
            \arrow[d, "\delta_{1}"', end anchor = 139]
            & 
            A 
            \arrow[d, "f"]
            \\
            \{ 0 \rightarrow 1 \} 
            \arrow[r, "u"']
            \arrow[ru, dotted, "{\varphi(a, u)}"{description, yshift=-2pt}, start anchor = north]
            & 
            B
        \end{tikzcd}
\end{equation*}

Garner \cite{Garner2009} introduced the notion of an \textsc{awfs} being \emph{cofibrantly generated by a small category}, and this was later extended to \emph{cofibrant generation by a small double category} \cite{BourkeGarner2016a}. 
In other words, each morphism in the right class $\RR$ of the \textsc{awfs} is generated by a coherent choice of lifts against vertical morphisms in $\JJ$, that is, $\RR \cong \RLP(\JJ)$. 

In Theorem~\ref{theorem:cofibrant-generation}, we show that $\LENS \cong \RLP(\JJ_{\lens})$ for a small double category $\JJ_{\lens}$, thus unifying the global and local perspectives and providing a rare example of a cofibrantly generated \textsc{awfs} where the left class is fully understood \cite[Example~6]{Bourke2023}.

\subsection*{Delta lenses vs.~split opfibrations}

In applications of delta lenses to bidirectional transformations in computer science, a central tension is the notion of \emph{least-change} or \emph{universal} updating \cite{AnjorinKoLeblebici2021,CheneyGibbonsMcKinnaStevens2017,JohnsonRosebrughWood2012}. 
While the chosen lifts of a delta lens are not guaranteed to be universal in any sense, the chosen lifts of a split opfibration must be opcartesian, a good candidate for what it means to be ``least-change''. 
In this paper, we consider another way of comparing delta lenses and split opfibrations: the class of functors that they lift against. These are the twisted coreflections and split coreflections, respectively. 
Although we do not examine the potential interpretations of twisted coreflections in applications, we hope that this will be pursued in future work. 

\subsection*{Outline}
In Section~\ref{sec:background}, we review the relevant background material on double categories and Bourke's approach to algebraic weak factorisation systems \cite{Bourke2023}. 
In Section~\ref{sec:delta-lenses}, we define delta lenses and construct the double category $\LENS$ of categories, functors, and delta lenses. 
In Section~\ref{sec:twisted-coreflections}, we introduce the notion of twisted coreflection, including several alternative characterisations, and construct the double category $\TWCOREF$ of categories, functors, and twisted coreflections. 
Finally, in Section~\ref{sec:awfs-twisted-coreflections-delta-lenses}, we demonstrate that twisted coreflections lift against delta lenses, and form an \textsc{awfs} on $\Cat$. 
In Section~\ref{sec:future-work}, we outline directions for future work. 

\subsection*{Notation}
Let $\Cat$ denote the category of small categories and functors, and let $\CAT$ denote the category of locally small categories and functors.
Let $\mathbf{\Delta}$ denote the full subcategory of $\Cat$ spanned by the non-empty finite ordinals $\one$, $\two$, $\three$, \ldots, $\mathbf{n}$, and let $\delta_{i}$ and $\sigma_{i}$ denote the face and degeneracy maps, respectively.
Composition is often denoted by juxtaposition, however $g \circ f$ is also used for extra clarity or emphasis. 

\section{Double categories and algebraic weak factorisation systems}
\label{sec:background}

In this section, we recall the concepts required to state the definition of an algebraic weak factorisation system (\textsc{awfs}).  
Rather than using the original formulation of an \textsc{awfs} due to Grandis and Tholen \cite[Definition~2.4]{GrandisTholen2006}, we instead use the equivalent double-categorical approach recently introduced by Bourke~\cite[Definition~3]{Bourke2023}. 

We begin with a brief overview of double categories (see Grandis and Paré \cite{GrandisPare1999} for a detailed account), followed by the definition of a double-categorical lifting operation which first appeared in Bourke and Garner \cite[Section~6.1]{BourkeGarner2016a}. 
We then construct, from a double functor $W \colon \JJ \rightarrow \SQ(\C)$, the double category $\RLP(\JJ)$ of right lifts against $\JJ$, and the double category $\LLP(\JJ)$ of left lifts against $\JJ$.
We conclude with the definition of an \textsc{awfs}.

\subsection{Double categories}
\label{subsec:double-categories}

In this subsection, we recall the definitions of double category and double functor, and establish our notation for these concepts. 

A \emph{double category} $\DD = \langle \D_{0}, \D_{1} \rangle$ is an internal category in $\CAT$ as depicted below. 
\begin{equation*}
    \begin{tikzcd}[column sep = large]
        \D_{0} 
        \arrow[r, "\id" description]
        &
        \D_{1}
        \arrow[l, "\dom"', shift right = 3]
        \arrow[l, "\cod", shift left = 3]
        &
        \D_{1} \times_{\D_{0}} \D_{1} = \D_{2}
        \arrow[l, "\odot"']
    \end{tikzcd}
\end{equation*}
The objects and morphisms of $\D_{0}$ are called the \emph{objects} and \emph{horizontal morphisms} of $\DD$, while the objects and morphisms of $\D_{1}$ are called the \emph{vertical morphisms} and \emph{cells} of $\DD$.

A typical cell $\alpha$ in a double category is denoted as below, with \emph{boundary} consisting of the objects $A$, $B$, $C$, and $D$, the horizontal morphisms $h$ and $k$, and the vertical morphisms $f$ and $g$. 
A double category is called \emph{thin} if each cell is determined by its boundary; in this case, we use $(h, k) \colon f \rightarrow g$ to denote a typical cell between vertical morphisms. 
\begin{equation*}
    \begin{tikzcd}
        A 
        \arrow[r, "h"]
        \arrow[rd, phantom, "\alpha"]
        \arrow[d, "f"', proarrow]
        &
        C  
        \arrow[d, "g", proarrow]
        \\
        B 
        \arrow[r, "k"']
        & 
        D
    \end{tikzcd}
\end{equation*}

For each category $\C$ there is a thin double category $\SQ(\C) = \langle \C, \C^{\two} \rangle$, called the \emph{double category of squares}, whose objects are those of $\C$, whose horizontal and vertical morphisms are given by the morphisms of $\C$, and whose cells are the commutative squares in $\C$. 

A double functor $F \colon \CC \rightarrow \DD$ consists of a pair of functors $F = \langle F_{0}, F_{1} \rangle$ such that the following diagram in $\CAT$ commutes. 
\begin{equation*}
    \begin{tikzcd}[column sep = large, row sep = large]
        \C_{0} 
        \arrow[d, "F_{0}"']
        \arrow[r, "\id" description]
        &
        \C_{1}
        \arrow[d, "F_{1}"]
        \arrow[l, "\dom"', shift right = 3]
        \arrow[l, "\cod", shift left = 3]
        &
        \C_{2}
        \arrow[l, "\odot"']
        \arrow[d, "F_{2} = F_{1} \times F_{1}"]
        \\
        \D_{0} 
        \arrow[r, "\id" description]
        &
        \D_{1}
        \arrow[l, "\dom"', shift right = 3]
        \arrow[l, "\cod", shift left = 3]
        &
        \D_{2}
        \arrow[l, "\odot"']
    \end{tikzcd}
\end{equation*}
A double functor $F$ will be called \emph{concrete} if $F_{0}$ is the identity and $F_{1}$ is faithful. 
If $\DD$ admits a concrete double functor to $\SQ(\D_{0})$, then it is a thin double category.

The notions of (strict) double category and (strict) double functor may be weakened to those of \emph{pseudo double category} and \emph{pseudo double functor}, respectively. 
In a pseudo double category, vertical composition is only unital and associative up to isomorphism, as in a bicategory. 
Similarly, a pseudo double functor only preserves vertical identities and composites up to isomorphism.
Further details may be found in Grandis and Paré~\cite{GrandisPare1999}.

\subsection{Double-categorical lifting operations}
\label{subsec:lifting-operations}

In this subsection, we recall the notion of a lifting operation, following closely the exposition of Bourke \cite[Section~2.2]{Bourke2023}.

Suppose $\LL = \langle \L_{0}, \L_{1} \rangle$ and $\RR = \langle \R_{0}, \R_{1} \rangle$ are thin double categories. 
Given a cospan of double functors
\begin{equation}
\label{equation:cospan-double-functors}
    \begin{tikzcd}
        \LL 
        \arrow[r, "U"]
        &
        \SQ(\C)
        &
        \RR 
        \arrow[l, "V"']
    \end{tikzcd}
\end{equation}
a \emph{$(\LL, \RR)$-lifting operation} $\varphi$ consists of a family of functions $\varphi_{j, k}$ indexed by vertical arrows $j \colon A \nrightarrow B$ in $\LL$ and $k \colon X \nrightarrow Y$ in $\RR$, that assign to each commuting square
\begin{equation*}
    \begin{tikzcd}[column sep = large]
        UA 
        \arrow[r, "s"]
        \arrow[d, "Uj"']
        &
        VX
        \arrow[d, "Vk"]
        \\
       UB 
        \arrow[r, "t"']
        \arrow[ru, dotted, "{\varphi_{j, k}(s,\, t)}" description]
        &
        VY
    \end{tikzcd}
\end{equation*}
a diagonal filler $\varphi_{j, k}(s, t) \colon UB \rightarrow VX$, as shown above, making both triangles commute. 
These diagonal fillers are required to satisfy: 
\begin{itemize}[leftmargin=*, label={$\diamond$}]
    \item \emph{Horizontal compatibility}: the diagonal fillers are natural in the cells of $\LL$ and $\RR$. 
    \begin{equation*}
        \begin{tikzcd}
            UA 
            \arrow[r, "Ur_{0}"]
            \arrow[d, "Ui"']
            &
            UC 
            \arrow[r, "s"]
            \arrow[d, "Uj"']
            &
            VX
            \arrow[d, "Vk"]
            \\
            UB 
            \arrow[r, "Ur_{1}"']
            &
            UD 
            \arrow[ru, dotted, "\varphi_{j, \, k}" description]
            \arrow[r, "t"']
            &
            VY
        \end{tikzcd}
        \qquad = \qquad
        \begin{tikzcd}[column sep = large]
            UA 
            \arrow[r, "{s \, \circ \, Ur_{0}}"]
            \arrow[d, "Ui"']
            &
            VX
            \arrow[d, "Vk"]
            \\
            UB 
            \arrow[ru, dotted, "\varphi_{i, \, k}" description]
            \arrow[r, "{t \, \circ \, Ur_{1}}"']
            & 
            VY
        \end{tikzcd}
    \end{equation*}
    Naturality in $\LL$ says that given a morphism $(r_{0}, r_{1}) \colon i \rightarrow j$ in $\L_{1}$, we have the equality of diagonals as depicted above; this means that $\varphi_{j, \, k}(s, t) \circ Ur_{1} = \varphi_{i, \, k}(s \circ Ur_{0}, t \circ Ur_{1})$. 
    Naturality in $\RR$ gives a corresponding condition on the right. 
    \item \emph{Vertical compatibility}: the diagonal fillers respect vertical composition in $\LL$ and $\RR$. 
    \begin{equation*}
        \begin{tikzcd}[column sep = large]
            UA 
            \arrow[r, "s"]
            \arrow[d, "Ui"']
            &
            VX
            \arrow[dd, "Vk"]
            \\
            UB 
            \arrow[ru, dotted, "\varphi_{i, k}" description]
            \arrow[d, "Uj"']
            & 
            \\
            UC 
            \arrow[ruu, dotted, "\varphi_{j, k}" description]
            \arrow[r, "t"']
            &
            VY
        \end{tikzcd}
        \qquad = \qquad
        \begin{tikzcd}[column sep = large]
            UA 
            \arrow[r, "s"]
            \arrow[d, "Ui"']
            &
            VX
            \arrow[dd, "Vk"]
            \\
            UB 
            \arrow[d, "Uj"']
            & 
            \\
            UC 
            \arrow[ruu, dotted, "\varphi_{j \, \circ \, i, \, k}" description]
            \arrow[r, "t"']
            &
            VY
        \end{tikzcd}
    \end{equation*}
    Respecting vertical composition in $\LL$ says that given a composable pair of vertical morphisms $i \colon A \nrightarrow B$ and $j \colon B \nrightarrow C$ in $\LL$, we have the equality of the main diagonals as depicted above; this means that $\varphi_{j \, \circ \, i,\, k}(s, t) = \varphi_{j, \, k}(\varphi_{i, \, k}(s, t \circ Uj), t)$. 
    Respecting the composition in $\RR$ gives a corresponding condition but with a composable pair of vertical morphisms in $\RR$. 
\end{itemize}

The definition of a $(\LL, \RR)$-lifting operation also makes sense, as stated, when $\LL$ and $\RR$ are \emph{pseudo} double categories. 
Note that \emph{any} double functor into $\SQ(\C)$ is strict (not pseudo), and thus the double functors $U$ and $V$ in a lifting operation are necessarily strict, even if $\LL$ and $\RR$ are pseudo double categories. 

Given an $(\LL, \RR)$-lifting operation $\varphi$ and (pseudo) double functors $F \colon \LL' \rightarrow \LL$ and $G \colon \RR' \rightarrow \RR$, there is an induced $(\LL', \RR')$-lifting operation $\varphi_{F,\, G}$ with $(\varphi_{F,\, G})_{j,\, k} = \varphi_{Fj,\, Gk}$ for each vertical morphism $j$ in $\LL'$ and vertical morphism $k$ in $\RR'$. 

A \emph{lifting structure} $(\LL, \varphi, \RR)$ consists of a pair of concrete double functors $U \colon \LL \rightarrow \SQ(\C)$ and $V \colon \RR \rightarrow \SQ(\C)$ equipped with a $(\LL, \RR)$-lifting operation $\varphi$; this forms the basic data of an algebraic weak factorisation system (see Section~\ref{subsec:awfs}).
The double functors $U$ and $V$ are left implicit in the notation for a lifting structure. 

\subsection{Double categories of lifts}
\label{subsec:double-categories-of-lifts}

In this subsection, we recall the double categories $\RLP(\JJ)$ and $\LLP(\JJ)$ given the data of a double functor $W \colon \JJ \rightarrow \SQ(\C)$. 
We call these the \emph{double category of right lifts} and the \emph{double category of left lifts}, respectively (names for these double categories do not seem to appear in the literature). 
Originally the notation $\JJ^{\pitchfork\mskip-9.2mu\pitchfork}$ for $\RLP(\JJ)$ and ${}^{\pitchfork\mskip-9.2mu\pitchfork}\JJ$ for $\LLP(\JJ)$ was used by Bourke and Garner \cite[Section~6.1]{BourkeGarner2016a}, however we instead follow the notation\footnote{To avoid confusion, we note that there is a minor typographical error in the final paragraph of \cite[p.6]{Bourke2023} in which $\mathbf{RLP}(\LL)$ should be used instead of $\mathbf{LLP}(\LL)$, and $\mathbf{LLP}(\RR)$ should be used instead of $\mathbf{RLP}(\RR)$; all other occurrences in the paper remain correct.} later introduced by Bourke \cite[Section~2.2]{Bourke2023}.

The \emph{double category of right lifts} against $\JJ$, denoted $\RLP(\JJ) = \langle \C, \rlp(\JJ) \rangle$, is defined as follows.  
The objects and horizontal morphisms are given by the objects and morphisms of $\C$. 
A vertical morphism consists of a pair $(f, \varphi)$ where $f \colon A \rightarrow B$ is a morphism in $\C$, and $\varphi$ is a $(\JJ, \VV\two)$-lifting operation on the cospan 
\begin{equation*}
    \begin{tikzcd}
        \JJ 
        \arrow[r, "W"]
        &
        \SQ(\C)
        &
        \VV\two 
        \arrow[l, "f"']
    \end{tikzcd}
\end{equation*}
where $\VV\two$ is the free double category containing a vertical morphism. 
Therefore, a component of a vertical morphism $(f, \varphi) \colon A \nrightarrow B$ may be depicted as below. 
\begin{equation*}
    \begin{tikzcd}[column sep = large]
        WX
        \arrow[r, "s"]
        \arrow[d, "Wj"']
        &
        A 
        \arrow[d, "f"]
        \\
        WY
        \arrow[r, "t"']
        \arrow[ru, dotted, "{\varphi_{j}(s, \, t)}" description]
        & 
        B
    \end{tikzcd}
\end{equation*}
A cell $(f, \varphi) \rightarrow (g, \psi)$ consists of a commutative square $(h, k) \colon f \rightarrow g$ in $\C$ that commutes with the lifting operations, in the sense that we have equality of the diagonals as depicted below; this means that $h \circ \varphi_{j}(s, t) = \psi_{j}(hs, kt)$. 
\begin{equation*}
    \begin{tikzcd}
        WX
        \arrow[d, "Wj"']
        \arrow[r, "s"]
        & 
        A 
        \arrow[r, "h"]
        \arrow[d, "f"]
        &
        C 
        \arrow[d, "g"]
        \\
        WY
        \arrow[r, "t"']
        \arrow[ru, dotted, "{\varphi_{j}}" description]
        &
        B 
        \arrow[r, "k"']
        &
        D 
    \end{tikzcd}
    \qquad = \qquad
    \begin{tikzcd}
        WX
        \arrow[r, "hs"]
        \arrow[d, "Wj"']
        &
        C 
        \arrow[d, "g"]
        \\
        WY
        \arrow[ru, dotted, "{\psi_{j}}" description]
        \arrow[r, "kt"']
        &
        D 
    \end{tikzcd}
\end{equation*} 
Finally, given a composable pair of vertical morphisms $(f, \varphi) \colon A \nrightarrow B$ and $(g, \psi) \colon B \nrightarrow C$, their composite $(g \circ f, \theta) \colon A \nrightarrow C$ has lifting operation $\theta_{j}(s, t) = \varphi_{j}(s, \psi_{j}(fs, t))$ as depicted below.  
\begin{equation*}
    \begin{tikzcd}
        WX
        \arrow[r, "s"]
        \arrow[dd, "Wj"']
        &
        A 
        \arrow[d, "f"]
        \\
        &
        B 
        \arrow[d, "g"]
        \\
        WY
        \arrow[r, "t"']
        \arrow[ru, dotted, "\psi_{j}" description]
        \arrow[ruu, dotted, "\varphi_{j}" description]
        &
        C 
    \end{tikzcd}
\end{equation*}

This completes the description of the double category $\RLP(\JJ)$.
There is a concrete double functor $\RLP(\JJ) \rightarrow \SQ(\C)$ that assigns a vertical morphism $(f, \varphi)$ to $f$, and
there is a canonical lifting structure $(\JJ, \can, \RLP(\JJ))$.
The double category $\RLP(\JJ)$ plays an important role is characterising several double categories of interest (see Section~\ref{subsec:cofibrant-generation}).

The \emph{double category of left lifts} against $\JJ$, denoted $\LLP(\JJ) = \langle \C, \llp(\JJ) \rangle$, is defined in a dual way. 
A vertical morphism consists of a pair $(f, \varphi)$ where $f \colon A \rightarrow B$ is a morphism in $\C$, and $\varphi$ is a $(\VV\two, \JJ)$-lifting operation on the cospan below.
\begin{equation*}
    \begin{tikzcd}
        \VV\two 
        \arrow[r, "f"]
        &
        \SQ(\C)
        &
        \JJ
        \arrow[l, "W"']
    \end{tikzcd}
\end{equation*}
Therefore, a component of a vertical morphism $(f, \varphi)$ may be depicted as below. 
\begin{equation*}
    \begin{tikzcd}[column sep = large]
        A
        \arrow[d, "f"']
        \arrow[r, "s"]
        &
        X
        \arrow[d, "Wk"]
        \\
        B 
        \arrow[ru, dotted, "{\varphi_{k}(s, \, t)}" description]
        \arrow[r, "t"']
        &
        Y
    \end{tikzcd}
\end{equation*}
There is a concrete double functor $\LLP(\JJ) \rightarrow \SQ(\C)$ given by $(f, \varphi) \mapsto f$, and there is a canonical lifting structure $(\LLP(\JJ), \can, \JJ)$. 

Given a lifting structure $(\LL, \varphi, \RR)$ on the cospan of double functors \eqref{equation:cospan-double-functors}, 
there are canonically induced double functors 
\[
    \varphi_{l} \colon \LL \longrightarrow \LLP(\RR) 
    \qquad \qquad
    \varphi_{r} \colon \RR \longrightarrow \RLP(\LL)
\]
where on vertical morphisms we have $\varphi_{l}(j) = (Uj, \varphi)$ and $\varphi_{r}(k) = (Vk, \varphi)$.

\subsection{Algebraic weak factorisation systems}
\label{subsec:awfs} 

In this subsection, we introduce the reformulation of an algebraic weak factorisation system (\textsc{awfs}) due to Bourke \cite{Bourke2023}. 
We require that $\LL$ and $\RR$ are strict, rather than pseudo double categories. 

An \emph{algebraic weak factorisation system} on the category $\C$ is a lifting structure $(\LL, \varphi, \RR)$ on a cospan of concrete double functors 
\begin{equation*}
    \begin{tikzcd}
        \LL 
        \arrow[r, "U"]
        &
        \SQ(\C)
        &
        \RR 
        \arrow[l, "V"']
    \end{tikzcd}
\end{equation*}
such that the following two axioms hold:
\begin{enumerate}[leftmargin=*]
    \item \emph{Axiom of lifting}: the induced double functors $\varphi_{l} \colon \LL \rightarrow \LLP(\RR)$ and $\varphi_{r} \colon \RR \rightarrow \RLP(\LL)$ are invertible; 
    \item \emph{Axiom of factorisation}: each morphism $f \colon A \rightarrow B$ in $\C$ admits a factorisation 
    \begin{equation*}
        \begin{tikzcd}
            A 
            \arrow[r, "U_{1}f_{1}"]
            &
            X
            \arrow[r, "V_{1}f_{2}"]
            &
            B
        \end{tikzcd}
    \end{equation*}
    such that $(U_{1}f_{1}, 1_{B}) \colon f \rightarrow V_{1}f_{2}$ is a universal arrow from $f$ to $V_{1}$, and such that $(1_{A}, V_{1}f_{2}) \colon U_{1}f_{1} \rightarrow f$ is a universal arrow from $U_{1}$ to $f$.
\end{enumerate}

Bourke \cite[Proposition~17]{Bourke2023} shows that it is equivalent for the \emph{axiom of factorisation} to require that either $(U_{1}f_{1}, 1_{B})$ is a universal arrow from $f$ to $V_{1}$ \emph{or} that $(1_{A}, V_{1}f_{2})$ is a universal arrow from $V_{1}$ to $f$; each condition implies the other.  

An \emph{orthogonal factorisation system} is an algebraic weak factorisation system $(\LL, \varphi, \RR)$ on $\C$ such that the underlying functors $U_{1} \colon \L_{1} \rightarrow \C^{\two}$ and $V_{1} \colon \R_{1} \rightarrow \C^{\two}$ are fully faithful. 

A morphism of algebraic weak factorisation systems $(F, G) \colon (\LL, \varphi, \RR) \rightarrow (\LL', \varphi', \RR')$ on a category $\C$ consists of a commutative diagram of double functors
\begin{equation*}
    \begin{tikzcd}
        \LL 
        \arrow[r, "U"]
        \arrow[d, "F"']
        & 
        \SQ(\C)
        & 
        \RR 
        \arrow[l, "V"']
        \arrow[from=d, "G"']
        \\
        \LL'
        \arrow[ru, "U'"']
        & 
        & 
        \RR' 
        \arrow[lu, "V'"]
    \end{tikzcd}
\end{equation*}
such that $\varphi_{j, Gk} = \varphi'_{Fj, k}$ for all vertical morphisms $j \in \LL$ and $k \in \RR'$.

\section{The double category of delta lenses}
\label{sec:delta-lenses}

In this section, we recall the notion of delta lens \cite{DiskinXiongCzarnecki2011,JohnsonRosebrugh2013}, and construct a thin double category $\LENS$ of categories, functors, and delta lenses \cite{Clarke2022}.
In Theorem~\ref{theorem:cofibrant-generation}, we show that the double category of delta lenses is isomorphic to the double category $\RLP(\JJ)$ of right lifts for a small double category $\JJ$. 
We also prove that $\LENS$ has \emph{tabulators}, and use this to show that a delta lens is equivalent to a commutative diagram of functors
\begin{equation*}
    \begin{tikzcd} 
        X 
        \arrow[r, "\varphi"]
        \arrow[rd, "f\varphi"']
        & 
        A 
        \arrow[d, "f"]
        \\
        & 
        B
    \end{tikzcd}
\end{equation*}
where $\varphi$ is bijective-on-objects and $f\varphi$ is a discrete opfibration. 

\subsection{Delta lenses and examples}
\label{subsec:delta-lenses-examples}

In this subsection, we recall the definition of delta lens, consider some examples, and construct the thin double category $\LENS$ of delta lenses. 

\begin{definition}
\label{definition:delta-lens}
A \emph{delta lens} $(f, \varphi) \colon A \nrightarrow B$ is a functor $f \colon A \rightarrow B$ equipped with a choice 
\[
    (a \in A, u \colon fa \rightarrow b \in B) 
    \qquad \longmapsto \qquad
    \varphi(a, u) \colon a \rightarrow a' \in A     
\]
of lifts such that the following axioms hold: 
\begin{enumerate}[(DL1)]
\itemsep=1ex
    \item \quad $f\varphi(a, u) = u$; \label{DL1}
    \item \quad $\varphi(a, 1_{fa}) = 1_{a}$; \label{DL2}
    \item \quad $\varphi(a, v \circ u) = \varphi(a', v) \circ \varphi(a, u)$. \label{DL3}
\end{enumerate}
\end{definition}
The composite of delta lenses $(f, \varphi) \colon A \nrightarrow B$ and $(g, \psi) \colon B \nrightarrow C$ is given by the pair $(gf, \theta) \colon A \nrightarrow C$ where $\theta(a, u) = \varphi(a, \psi(fa, u))$. 
Categories, functors, and delta lenses form a thin double category 
$\LENS = \langle \Cat, \Lens \rangle$ whose cells are commutative squares of functors 
\begin{equation*}
    \begin{tikzcd}
    A
    \arrow[r, "h"]
    \arrow[d, proarrow, "{(f,\, \varphi)}"']
    &
    C
    \arrow[d, proarrow, "{(g,\, \psi)}"]
    \\
    B
    \arrow[r, "k"']
    &
    D
    \end{tikzcd}
\end{equation*}
that preserve the chosen lifts, that is, such that $h\varphi(a, u) = \psi(ha, ku)$. 
There is a concrete double functor $V \colon \LENS \rightarrow \SQ(\Cat)$ that sends each delta lens to its underlying functor. 

\begin{example}
\label{example:discrete-opfibration}
A functor $f \colon A \rightarrow B$ is a \emph{discrete opfibration} for each object $a \in A$ and morphism $u \colon fa \rightarrow b$ in $B$, there exists a unique morphism $w \colon a \rightarrow a'$ in $A$ such that $fw = u$. 
Therefore, each discrete opfibration admits a unique delta lens structure (in fact, it is a split opfibration). 
Conversely, the underlying functor of a delta lens $(f, \varphi) \colon A \nrightarrow B$ is a discrete opfibration if $\varphi(a, fw) = w$ holds for all $w \colon a \rightarrow a'$ in $A$. 
The discrete opfibrations are precisely the horizontal morphisms in $\LENS$ that have a \emph{companion} \cite{GrandisPare2004}.
\end{example}

\begin{example}
\label{example:split-opfibration}
A \emph{split opfibration} is a delta lens such that chosen lifts are \emph{opcartesian}. 
\end{example}

Let $\DOPF = \langle \Cat, \DOpf \rangle$ and $\SOPF = \langle \Cat, \SOpf \rangle$ denote the restrictions of the double category $\LENS$ determined by the discrete opfibrations and split opfibrations, respectively. 

\subsection{Generating the double category of delta lenses}
\label{subsec:cofibrant-generation}

In this subsection, we show that there is a small double category $\JJ$ (a category internal to $\Cat$) such that the double category $\LENS$ of delta lenses is isomorphic to the double category $\RLP(\JJ)$ of right lifts. 
We also demonstrate analogous statements for the double categories $\DOPF$ and $\SOPF$. 
Our proofs mirror closely that given for \textsc{lali}s by Bourke and Garner \cite[Proposition~19]{BourkeGarner2016a}. 

\begin{theorem}
\label{theorem:cofibrant-generation}
There is an isomorphism $\LENS \cong \RLP(\JJ)$ for a small double category $\JJ$. 
\end{theorem}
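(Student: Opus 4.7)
The plan is to construct $\JJ$ as a small thin double category whose only generating non-identity vertical morphism is $\delta_{1} \colon \one \to \two$, together with an auxiliary vertical morphism $j' \colon \two \to \three$ (whose underlying functor is a face map) and their vertical composite $j' \odot \delta_{1} \colon \one \to \three$. The horizontal morphisms of $\JJ$ are drawn from the face and degeneracy maps among $\one$, $\two$, $\three$, and its cells are chosen so that the three axioms of a delta lens correspond exactly to the three features of a lifting operation: (DL1) will come for free from commutativity of the diagonal-lift triangle; (DL2) from naturality with respect to a cell $\delta_{1} \to 1_{\one}$ with horizontal edges $(1_{\one}, \sigma_{0})$, exploiting the simplicial identity $\sigma_{0} \delta_{1} = 1_{\one}$; and (DL3) from vertical compatibility between $\delta_{1}$ and $j'$, combined with naturality against a cell whose bottom edge is the face $\delta_{1} \colon \two \to \three$ witnessing the composite morphism in $B$.

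I would then define a double functor $\Phi \colon \LENS \to \RLP(\JJ)$ sending a delta lens $(f, \varphi)$ to $(f, \tilde{\varphi})$, where $\tilde{\varphi}_{\delta_{1}}(a, u) = \varphi(a, u)$, interpreting $a \in A$ as a functor $\one \to A$ and $u \colon fa \to b$ as a functor $\two \to B$; the values of $\tilde{\varphi}$ on the other generating vertical morphisms of $\JJ$ are forced by vertical compatibility. Conversely, given $(f, \psi) \in \RLP(\JJ)$, setting $\varphi(a, u) := \psi_{\delta_{1}}(a, u)$ yields a delta lens: (DL1) comes from the lift triangles, and (DL2), (DL3) from the naturality and vertical compatibility conditions imposed by the distinguished cells and composites in $\JJ$. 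A routine verification shows these assignments are mutually inverse double functors, which amounts to checking that a cell in $\LENS$ (a square of functors preserving chosen lifts) corresponds exactly to a cell in $\RLP(\JJ)$ (a square compatible with the lifting operations).

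The main obstacle is calibrating $\JJ$ so that its cells and composites encode precisely (DL1)--(DL3) and no more: too sparse, and the lifting operation is under-constrained; too rich, and spurious axioms beyond those of a delta lens are imposed. My guide here is the analogous generating double category for left adjoint right inverse functors constructed by Bourke and Garner \cite[Proposition~19]{BourkeGarner2016a}, whose proof strategy this theorem is stated to mirror; the delta-lens variant amounts to a careful recalibration of the cells of that $\JJ$. Once $\JJ$ is pinned down, the corresponding presentations $\DOPF \cong \RLP(\JJ')$ and $\SOPF \cong \RLP(\JJ'')$ flagged at the start of this subsection should follow by enlarging $\JJ$ with further cells enforcing, respectively, uniqueness and opcartesianness of the chosen lifts.
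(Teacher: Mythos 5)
Your construction is essentially the paper's: the same objects $\one$, $\two$, $\three$, the same two generating vertical morphisms (the paper calls them $i$ and $j$, with $Wi = \delta_{1} \colon \one \to \two$ and $Wj = \delta_{2} \colon \two \to \three$), and your cells for (DL2) and (DL3) coincide with the paper's generating cells (b) and (c): cell (b) has horizontal edges $(1_{\one}, \sigma_{0})$ exactly as you propose, and cell (c) has bottom edge $\delta_{1} \colon \two \to \three$ with right-hand vertical the composite $j \circ i$.

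There is, however, one concrete gap. You assert that ``the values of $\tilde{\varphi}$ on the other generating vertical morphisms of $\JJ$ are forced by vertical compatibility.'' They are not. Vertical compatibility determines the lift against the \emph{composite} $j' \odot \delta_{1}$ from the lifts against $\delta_{1}$ and $j'$ separately; it says nothing about how the lift against the generator $j'$ relates to the lift against $\delta_{1}$. As your $\JJ$ stands, a vertical morphism of $\RLP(\JJ)$ is a functor $f$ equipped with a choice of lifts $\varphi(a,u)$ \emph{and} an independent choice of lifts $\gamma(w,v)$ against $Wj' = \delta_{2}$, which is strictly more data than a delta lens, so the comparison with $\LENS$ would fail to be an isomorphism on vertical morphisms. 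The paper closes this with a further generating cell (its cell (a)), a square from $i$ to $j$ with both horizontal edges given by $\delta_{0}$; horizontal naturality with respect to it forces $\gamma(w,v) = \varphi(a', v)$ where $a'$ is the codomain of $w$. You do flag ``calibrating $\JJ$'' as the main obstacle and point to Bourke--Garner's Proposition~19 as the template, which would indeed supply such a cell, but the mechanism you name for pinning down the lift against $j'$ --- vertical compatibility --- cannot do the job on its own.
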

\begin{proof}
We define a small double category $\JJ_{\lens}$ and a double functor $W \colon \JJ_{\lens} \rightarrow \SQ(\Cat)$ such that $\RLP(\JJ_{\lens}) \cong \LENS$. 
The objects of $\JJ_{\lens}$ are the ordinals $\one$, $\two$, and $\three$, and its horizontal morphisms are order-preserving maps; the double functor $W$ acts on these via the inclusion $\mathbf{\Delta} \hookrightarrow \Cat$.
The vertical morphisms are freely generated by morphisms $i \colon \one \rightarrow \two$ and $j \colon \two \rightarrow \three$. 
The functor $W$ acts on vertical morphisms by $Wi = \delta_{1} \colon \one \rightarrow \two$ and $Wj = \delta_{2} \colon \two \rightarrow \three$.
The cells of $\JJ$ are freely generated by the following three cells. 
\begin{equation*}
    \begin{tikzcd}
        \one
        \arrow[r, "\delta_{0}"]
        \arrow[d, "i"'{scale=1.2}]
        \arrow[rd, phantom, "\textrm{(a)}"]
        &
        \two
        \arrow[d, "j"{scale=1.2}]
        \\
        \two 
        \arrow[r, "\delta_{0}"']
        &
        \three
    \end{tikzcd}
    \qquad \qquad
    \begin{tikzcd}
        \one 
        \arrow[r, equals]
        \arrow[d, "i"'{scale=1.2}]
        \arrow[rd, phantom, "\textrm{(b)}"]
        &
        \one
        \arrow[d, "\id"]
        \\
        \two
        \arrow[r, "!"']
        &
        \one
    \end{tikzcd}
    \qquad \qquad
    \begin{tikzcd}
        \one 
        \arrow[r, equal]
        \arrow[dd, "i"'{scale=1.2}]
        \arrow[rdd, phantom, "\textrm{(c)}"]
        &
        \one
        \arrow[d, "i"{scale=1.2}]
        \\
        &
        \two
        \arrow[d, "j"{scale=1.2}]
        \\
        \two
        \arrow[r, "\delta_{1}"']
        & 
        \three
    \end{tikzcd}
\end{equation*}
Given a functor $f \colon A \rightarrow B$, we see that: 
\begin{itemize}[leftmargin=*, label={$\diamond$}]
    \item To equip $f$ with a $Wi$-lifting operation is to give, for every $a \in A$ and $u \colon fa \rightarrow b$ in $B$, a morphism $\varphi(a, u) \colon a \rightarrow a'$ such that $f\varphi(a, u) = u$, that is, \ref{DL1} is satisfied. 
    \item To equip $f$ with a $Wj$-lifting operation is to give, for each $w \colon a \rightarrow a'$ in $A$ and $v \colon fa' \rightarrow b$ in $B$, a morphism $\gamma(w, v) \colon a' \rightarrow a''$ such that $f\gamma(w, v) = v$; compatibility with (a) forces $\gamma(w, v) = \varphi(a', v)$. 
    \item Compatibility with (b) and (c) forces that \ref{DL2} and \ref{DL3}, respectively, are satisfied.
\end{itemize}
Therefore, a vertical morphism in $\RLP(\JJ_{\lens})$ is precisely a delta lens. 
The diagram below shows the composition of delta lenses coincides with vertical composition in $\RLP(\JJ_{\lens})$.
\begin{equation*}
    \begin{tikzcd}[column sep = huge]
        \one
        \arrow[r, "a"]
        \arrow[dd, "Wi"']
        &
        A 
        \arrow[d, "{(f, \, \varphi)}"]
        \\
        &
        B 
        \arrow[d, "{(g, \, \psi)}"]
        \\
        \two
        \arrow[r, "u"']
        \arrow[ru, dotted, "{\psi(fa, \, u)}"{xshift=5pt, description}]
        \arrow[ruu, dotted, "{\varphi(a, \, \psi(fa, \, u))}"{yshift=10pt, description}]
        & 
        C
    \end{tikzcd}
\end{equation*}
Finally, the following diagram demonstrates that the cells in $\RLP(\JJ_{\lens})$ and $\LENS$ agree. 
\begin{equation*}
    \begin{tikzcd}
        \one
        \arrow[d, "Wi"']
        \arrow[r, "a"]
        &
        A 
        \arrow[d, "{(f, \, \varphi)}"]
        \arrow[r, "h"]
        &
        C 
        \arrow[d, "{(g, \, \psi)}"]
        \\
        \two 
        \arrow[r, "u"']
        \arrow[ru, dotted, "{\varphi(a, \, u)}" description]
        &
        B 
        \arrow[r, "k"']
        &
        D
    \end{tikzcd}
    \qquad = \qquad
    \begin{tikzcd}[column sep = large]
        \one
        \arrow[r, "ha"]
        \arrow[d, "Wi"']
        &
        C 
        \arrow[d, "{(g, \, \psi)}"]
        \\
        \two
        \arrow[r, "ku"']
        \arrow[ru, dotted, "{\psi(ha, \, ku)}" description]
        &
        D
    \end{tikzcd}
\end{equation*}
This completes the description of the isomorphism $\RLP(\JJ_{\lens}) \cong \LENS$.
\end{proof}

Since delta lenses are closely related to both discrete opfibrations, and split opfibrations, it is natural to wonder if the double categories $\DOPF$ and $\SOPF$ are also isomorphic to the double category $\RLP(\JJ)$ of right lifts for an appropriate small double category $\JJ$. 
We now show that this is the case. 
While we expect that following results are known to the experts, we could not find any explicit reference in the literature. 

\begin{proposition}
\label{proposition:generating-discrete-opfibrations}
There exists an isomorphism $\DOPF \cong \RLP(\JJ)$ for a small double category $\JJ$. 
\end{proposition}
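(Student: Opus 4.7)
The plan is to follow the strategy of Theorem~\ref{theorem:cofibrant-generation}, exploiting the observation that a discrete opfibration is precisely a functor admitting a $\delta_{1}$-lifting operation whose lifts are \emph{unique}. Once existence of lifts (condition~\ref{DL1}) and uniqueness are imposed, the axioms \ref{DL2} and \ref{DL3} follow automatically, since $1_{a}$ is a lift of $1_{fa}$ starting at $a$, and $\varphi(a', v) \circ \varphi(a, u)$ is a lift of $v \circ u$ starting at $a$, so by uniqueness each equals the corresponding chosen lift. Consequently, the generating double category $\JJ_{\dopf}$ for $\DOPF$ can be substantially simpler than $\JJ_{\lens}$.

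Concretely, I would take $\JJ_{\dopf}$ to have objects $\one$ and $\two$, with $\delta_{1} \colon \one \to \two$ (together with the identities) among its horizontal morphisms, a single generating vertical morphism $i \colon \one \to \two$ with $Wi = \delta_{1}$, and a single generating cell
\begin{equation*}
    \begin{tikzcd}
        \one \arrow[r, "\delta_{1}"] \arrow[d, "i"'{scale=1.2}] & \two \arrow[d, equals] \\
        \two \arrow[r, equals] & \two
    \end{tikzcd}
\end{equation*}
between $i$ and the vertical identity on $\two$, sent by $W$ to the evident commuting square in $\Cat$. For a functor $f \colon A \to B$, equipping $f$ with a $Wi$-lifting operation yields \ref{DL1}; the horizontal naturality of this lifting operation against the generating cell above then forces, using that the lift of any square against an identity vertical morphism is trivially the top horizontal morphism, the equation $\varphi(w \cdot \delta_{1}, fw) = w$ for every functor $w \colon \two \to A$. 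This is exactly the uniqueness characterising discrete opfibrations among delta lenses (Example~\ref{example:discrete-opfibration}).

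The remainder of the proof — that vertical composition and cells of $\RLP(\JJ_{\dopf})$ agree with those of $\DOPF$ — proceeds by essentially the same diagrammatic argument as in Theorem~\ref{theorem:cofibrant-generation}; indeed the horizontal morphism $\delta_{1}$ is precisely what is needed to extract the source object when identifying composites of lifts and images under functors. The main obstacle is verifying that the compatibility condition imposed by the single generating cell is exactly uniqueness and adds no extraneous constraints; once this is confirmed, the remaining bookkeeping is routine.
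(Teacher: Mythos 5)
Your proposal is correct, but it uses a smaller generating double category than the one in the paper's proof. The paper's proof defines $\JJ_{\dopf}$ by adjoining your single cell (there labelled (d)) to the \emph{full} generating data of $\JJ_{\lens}$ from Theorem~\ref{theorem:cofibrant-generation}, so that a vertical morphism of $\RLP(\JJ_{\dopf})$ is manifestly a delta lens whose underlying functor is a discrete opfibration; the identification with $\DOPF$ then only uses that a discrete opfibration carries a unique delta lens structure. Your route instead strips the generators down to the objects $\one$ and $\two$, the single vertical morphism $i$ with $Wi = \delta_{1}$, and the single cell, and recovers \ref{DL2} and \ref{DL3} from uniqueness of lifts. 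The key points, which you identify correctly, are that the lift against the vertical identity on $\two$ is forced to equal the top edge of the square, so horizontal naturality against your cell forces $\varphi(a, fw) = w$; that this equation yields uniqueness (two lifts $w, w'$ of $u$ at $a$ satisfy $w = \varphi(a, fw) = \varphi(a, u) = \varphi(a, fw') = w'$); and that uniqueness makes \ref{DL2} and \ref{DL3} automatic. This is precisely the content of Remark~\ref{remark:generating-discrete-opfibrations}, which the paper records immediately after its proof as an alternative generator $\JJ'$ with $\DOPF \cong \RLP(\JJ')$. What your version buys is a minimal presentation making transparent that being a discrete opfibration is a property detected by one vertical generator and one cell; what the paper's version buys is uniformity with $\JJ_{\lens}$ and $\JJ_{\sopf}$, exhibiting $\DOPF$ as cut out of $\LENS$ by a single extra cell.
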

\begin{proof}
Consider a thin double category $\JJ_{\dopf}$ freely generated by the same data as $\JJ_{\lens}$ in the proof of Theorem~\ref{theorem:cofibrant-generation}, but with the following additional cell. 
\begin{equation*}
    \begin{tikzcd}
        \one
        \arrow[r, "\delta_{1}"]
        \arrow[d, "Wi"'{scale=1.2}]
        \arrow[rd, phantom, "\textrm{(d)}"]
        &
        \two
        \arrow[d, "\id"]
        \\
        \two
        \arrow[r, equals]
        &
        \two
    \end{tikzcd}
\end{equation*}
Given a delta lens $(f, \varphi) \colon A \nrightarrow B$, compatibility with (d) requires that $\varphi(a, fw) = w$, which implies that the underlying functor is a discrete opfibration (see Example~\ref{example:discrete-opfibration}).
Therefore, we have an isomorphism $\DOPF \cong \RLP(\JJ_{\dopf})$ as required. 
\end{proof}

\begin{remark}
\label{remark:generating-discrete-opfibrations}
Consider the small double category $\JJ'$ obtained by restricting $\JJ_{\dopf}$ to the pair of objects $\one$ and $\two$, the vertical morphism $i \colon \one \rightarrow \two$, and the cell (d).
We see that to equip a functor $f \colon A \rightarrow B$ with a $Wi$-lifting operation compatible with (d) is precisely to say that it is a discrete opfibration (see Example~\ref{example:discrete-opfibration}).
Thus, we also have $\DOPF \cong \RLP(\JJ')$.
\end{remark}

\begin{proposition}
\label{proposition:generating-split-opfibrations}
There exists an isomorphism $\SOPF \cong \RLP(\JJ)$ for a small double category $\JJ$. 
\end{proposition}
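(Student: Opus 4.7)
The plan is to mimic the strategies of Theorem~\ref{theorem:cofibrant-generation} and Proposition~\ref{proposition:generating-discrete-opfibrations}, constructing a small thin double category $\JJ_{\sopf}$ together with a double functor $W \colon \JJ_{\sopf} \rightarrow \SQ(\Cat)$ such that $\RLP(\JJ_{\sopf}) \cong \SOPF$. Since a split opfibration is a delta lens whose chosen lifts are opcartesian (Example~\ref{example:split-opfibration}), $\JJ_{\sopf}$ should extend $\JJ_{\lens}$ by just enough extra data to force opcartesianness of the $\varphi(a,u)$.

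Concretely, I would introduce a new object representing the walking span $Z$ (with objects $\{0, 1, 2\}$ and two morphisms $0 \rightarrow 1$ and $0 \rightarrow 2$ out of a common source), together with a generating vertical morphism whose image under $W$ is the bijective-on-objects inclusion $Z \hookrightarrow \three$ into the walking commutative triangle. A lift against this vertical morphism corresponds to filling a commutative triangle in $B$ whose two outer legs have been lifted to $A$. I would then add cells binding this new generator to the existing $i \colon \one \rightarrow \two$ so that the first leg of the span in $A$ is forced to be a chosen $\varphi$-lift. Compatibility with these cells would then force the chosen lift $\varphi(a, u) \colon a \rightarrow a'$ to be opcartesian: for every $w \colon a \rightarrow a''$ in $A$ and every $v \colon b \rightarrow fa''$ in $B$ with $fw = v \circ u$, there exists a unique $\tilde{w} \colon a' \rightarrow a''$ with $\tilde{w} \circ \varphi(a, u) = w$ and $f\tilde{w} = v$. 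The verification that $\RLP(\JJ_{\sopf}) \cong \SOPF$ then proceeds exactly as in the proofs of Theorem~\ref{theorem:cofibrant-generation} and Proposition~\ref{proposition:generating-discrete-opfibrations}: each generating cell enforces a specific axiom, and the composition and cell compatibility are handled by the same diagrammatic arguments.

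The main obstacle is that opcartesianness is an existence-\emph{and}-uniqueness condition, whereas cell~(d) in the discrete opfibration case was simply an equation $\varphi(a, fw) = w$. Existence of $\tilde{w}$ is automatic from the new lifting operation, and uniqueness follows from the fact that a lifting operation is by definition a function (so any two candidate values must agree). The delicate point is choosing the binding cells so that the lift produced by the new $Z \hookrightarrow \three$ lifting operation is genuinely identified with the opcartesian factorisation of $w$ through $\varphi(a,u)$, which requires coherence both with the $i$-lifting operation producing $\varphi$ and with the vertical composition governed by cells~(a)–(c). Once the cells are correctly arranged, the argument is otherwise routine.
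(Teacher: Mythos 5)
Your overall strategy---extend $\JJ_{\lens}$ by a generating vertical morphism together with binding cells that force opcartesianness of the chosen lifts---is the right one and matches the paper's, but two steps fail as stated. First, the generator you choose is too strong. A vertical morphism of $\RLP(\JJ)$ must supply a filler for \emph{every} commutative square against \emph{every} generator; cells only impose equations among the chosen fillers and cannot restrict which squares require fillers. So ``binding the new generator to $i$ so that the first leg of the span is a chosen $\varphi$-lift'' does not exempt the remaining squares: lifting against $Z \hookrightarrow \three$ demands, for an \emph{arbitrary} span $(w_{1}, w_{2})$ in $A$ lying over a commutative triangle $(u, v)$ in $B$, a morphism $d$ with $d \circ w_{1} = w_{2}$ and $fd = v$. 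This already fails for the split opfibration $! \colon \two \rightarrow \one$ (take $w_{1}$ the non-identity morphism and $w_{2} = 1_{0}$; a filler would be a retraction of $w_{1}$), so the $\RLP$ of your double category would omit some split opfibrations. The paper avoids this by taking a generator $k$ with $Wk = \delta_{1} \colon \two \rightarrow \three$, so that a lifting square consists of a single morphism $w$ of $A$ together with a factorisation of $fw$, and a filler is a factorisation of $w$ itself; the requirement that its first factor be $\varphi(a, u)$ is then an \emph{equation on the filler}, imposed by a cell, rather than a restriction on which squares are admissible.

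Second, your uniqueness argument is not valid: that a lifting operation is a function only means one filler has been \emph{chosen}; it does not preclude other morphisms satisfying the same two equations, which is precisely what opcartesianness must rule out. Uniqueness has to be forced by an additional cell whose codomain is the identity vertical morphism on $\three$ (the paper's cell~(f), with top $\delta_{1}$ and bottom the identity). Horizontal compatibility with that cell says that any functor $s \colon \three \rightarrow A$ lying over the given triangle and restricting to $w$ along $\delta_{1}$ must equal the chosen filler, so the filler is the unique such factorisation. Without a cell of this kind, your $\RLP(\JJ_{\sopf})$ would still contain delta lenses whose chosen lifts admit non-unique factorisations, i.e.\ are not opcartesian.
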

\begin{proof}
Consider the thin double category $\JJ_{\sopf}$ with the same data as $\JJ_{\lens}$ in the proof of Theorem~\ref{theorem:cofibrant-generation}, but with an additional vertical morphism $k \colon \two \rightarrow \three$ such that $Wk = \delta_{1}$, and the following two additional cells. 
\begin{equation*}
    \begin{tikzcd}
        \one
        \arrow[r, "\delta_{1}"]
        \arrow[d, "i"'{scale=1.2}]
        \arrow[rd, phantom, "\textrm{(e)}"]
        &
        \two 
        \arrow[d, "k"{scale=1.2}]
        \\
        \two 
        \arrow[r, "d_{2}"']
        &
        \three
    \end{tikzcd}
    \qquad \qquad
    \begin{tikzcd}
        \two
        \arrow[r, "\delta_{1}"]
        \arrow[d, "k"'{scale=1.2}]
        \arrow[rd, phantom, "\textrm{(f)}"]
        &
        \three
        \arrow[d, "\id"]
        \\
        \three
        \arrow[r, equals]
        &
        \three
    \end{tikzcd}
\end{equation*}
Given a delta lens $(f, \varphi) \colon A \nrightarrow B$, to equip with a lifting operation against $Wk \colon \two \rightarrow \three$ is to give, for each morphism $w \colon a \rightarrow a'$ in $A$ and each pair of morphisms $(u, v) \colon fa \rightarrow b \rightarrow fa'$ in $B$ such that $v \circ u = fw$, a composable pair of morphisms $\psi(w, u, v) \colon a \rightarrow a''$ and $\theta(w, u, v) \colon a'' \rightarrow a'$ such that $\theta(w, u, v) \circ \psi(w, u, v) = w$, as well as  $f\psi(w, u, v) = u$ and $f\theta(w, u, v) = v$. 
Compatibility with (e) requires that $\psi(w, u, v) = \varphi(a, u)$, while compatibility with (f) requires that the morphisms $\theta(w, u, v)$ are unique such that the equations hold---that is, the morphisms $\varphi(a, u)$ are opcartesian and $(f, \varphi)$ is a split opfibration (see Example~\ref{example:split-opfibration}).
Therefore, we have $\SOPF \cong \RLP(\JJ_{\sopf})$ as required. 
\end{proof}

\begin{remark}
Since $\Cat$ is a locally presentable category, these characterisations of the double categories $\LENS$, $\DOPF$ and $\SOPF$ imply the existence a corresponding \textsc{awfs} \cite{BourkeGarner2016a} given by the lifting structure $(\LLP(\RLP(\JJ)), \can, \RLP(\JJ))$ on $\Cat$ \cite{Bourke2023}. 
An \textsc{awfs} of this form is said to be \emph{cofibrantly generated} by a small double category $\JJ$. 
\end{remark}

\subsection{A diagrammatic approach to delta lenses}
\label{subsec:diagrammatic-delta-lenses}

In this subsection, we recall that delta lenses are equivalent to certain commutative diagrams in $\Cat$ \cite{Clarke2020}. 
We call this the \emph{diagrammatic approach} to delta lenses, in contrast to the \emph{axiomatic approach} in Definition~\ref{definition:delta-lens}.
We show that the double category $\LENS$ has tabulators (Proposition~\ref{proposition:tabulators}), which instils a universal property on the diagrammatic presentation of a delta lens. 
In Proposition~\ref{proposition:LENS-equivalence} show that the equivalence between the axiomatic and diagrammatic approaches extends to a formal equivalence of double categories $\LENS \simeq \LENS_{\mathrm{d}}$. 

The notion of a \emph{tabulator} was first introduced by Grandis and Paré \cite{GrandisPare1999} as a certain kind of double-categorical limit. 
Here we use a slightly weaker definition (sometimes called a $1$-tabulator) which states that a double category $\DD = \langle \D_{0}, \D_{1} \rangle$ has tabulators if the functor $\id \colon \D_{0} \rightarrow \D_{1}$ has a right adjoint \cite{Niefield2012}. 
To prove that $\LENS$ has tabulators, we will use the following basic lemma. 

\begin{lemma}
\label{lemma:boo-dopf-jointly-monic}
If $f \colon X \rightarrow A$ is bijective-on-objects and $g \colon X \rightarrow B$ is a discrete opfibration, then $\langle f, g \rangle \colon X \rightarrow A \times B$ is a monomorphism in $\Cat$. 
\end{lemma}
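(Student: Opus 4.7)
The plan is to verify directly that $\langle f, g \rangle$ is injective on both objects and morphisms, which is equivalent to being a monomorphism in $\Cat$ (monomorphisms in $\Cat$ are precisely the injective-on-objects-and-morphisms functors, since the free category on a single arrow $\two$ and on a single object $\one$ together detect equality).

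Injectivity on objects is immediate from the hypothesis on $f$: if $\langle f, g \rangle(x) = \langle f, g \rangle(x')$, then in particular $fx = fx'$, so $x = x'$ since $f$ is bijective on objects.

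For injectivity on morphisms, I would take a pair $w_1 \colon x_1 \to x_2$ and $w_2 \colon x_1' \to x_2'$ in $X$ with $\langle f, g \rangle(w_1) = \langle f, g \rangle(w_2)$. Comparing domains and codomains and applying injectivity on objects gives $x_1 = x_1'$ and $x_2 = x_2'$. Then both $w_1$ and $w_2$ are morphisms $x_1 \to x_2$ in $X$ whose images under $g$ coincide, so they are two lifts of the same morphism $u := gw_1 = gw_2 \colon gx_1 \to gx_2$ in $B$ starting at $x_1$. By the defining property of discrete opfibrations (Example~\ref{example:discrete-opfibration}), such a lift is unique, forcing $w_1 = w_2$.

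There is no real obstacle: the argument is essentially a bookkeeping check and uses only the two hypotheses in their most basic form, one for objects and one for morphisms. The only subtlety worth flagging is that bijectivity on objects of $f$, rather than mere injectivity, is not actually needed for this lemma; injectivity on objects would suffice. However, the stronger hypothesis matches the intended application elsewhere in the paper (where $\langle f, g \rangle$ will arise as a factorisation involving a bijective-on-objects functor) and so is stated for convenience.
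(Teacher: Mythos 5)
Your proof is correct; the paper states this lemma without proof, and your direct verification (monos in $\Cat$ are the functors injective on objects and on morphisms; $f$ gives injectivity on objects, and then the unique-lifting property of the discrete opfibration $g$ gives injectivity on morphisms once domains are identified) is exactly the intended argument. Your side remark that injectivity on objects of $f$ suffices is also accurate.
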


\begin{proposition}
\label{proposition:tabulators}
The double category $\LENS$ has tabulators. 
\end{proposition}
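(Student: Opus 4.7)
The plan is to exhibit a right adjoint $T \colon \Lens \to \Cat$ to $\id \colon \Cat \to \Lens$ by extracting, from each delta lens $(f, \psi) \colon A \nrightarrow B$, the category of chosen lifts. Concretely, define $T(f, \psi) = X$ to be the subcategory of $A$ with the same objects as $A$ and whose morphisms are exactly those of the form $\psi(a, u)$ for some $a \in A$ and $u \colon fa \to b$ in $B$. Axiom \ref{DL2} guarantees that $X$ contains the identities, and axiom \ref{DL3} guarantees that $X$ is closed under composition, so $X$ is a genuine subcategory. Write $\varphi \colon X \hookrightarrow A$ for the inclusion, which is by construction bijective-on-objects; the composite $f\varphi \colon X \to B$ is a discrete opfibration by \ref{DL1}, since every morphism out of $a \in X$ is uniquely of the form $\psi(a, u)$ and satisfies $f\psi(a,u) = u$.

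Next I would verify the universal property. A cell $(h, k) \colon \id_C \to (f, \psi)$ in $\LENS$ is a commutative square $fh = k$ together with the lift-preservation identity $h u = \psi(hc, k u)$ for every morphism $u \colon c \to c'$ in $C$. Substituting $k = fh$, this condition becomes $hu = \psi(hc, fhu)$, which says exactly that every morphism in the image of $h$ is a chosen lift, i.e.\ lies in $X$. Hence $h$ factors through $\varphi$ as $\varphi h'$ for a (necessarily unique) functor $h' \colon C \to X$; this uniqueness is precisely where Lemma~\ref{lemma:boo-dopf-jointly-monic} is used, since $\langle \varphi, f\varphi \rangle \colon X \to A \times B$ is a monomorphism, and $(h, k) = (\varphi h', f\varphi h')$ determines $h'$.

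Conversely, given any functor $h' \colon C \to X$, setting $h := \varphi h'$ and $k := f\varphi h'$ produces a commutative square, and by construction every morphism in the image of $h$ is a chosen lift, so the lens-compatibility condition $hu = \psi(hc, fhu)$ is automatic. Thus $(h, k)$ is a cell $\id_C \to (f, \psi)$. These two assignments are inverse to each other and natural in $C$, yielding a bijection
\[
\Lens(\id_C, (f, \psi)) \;\cong\; \Cat(C, X),
\]
which establishes the desired right adjoint and shows that $\LENS$ admits tabulators.

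I anticipate the main obstacle to be administrative rather than conceptual: one must carefully disentangle the two roles played by the lift-preservation equation (it forces $h$ to land in $X$, and it is automatically satisfied once $h$ does land in $X$), and invoke Lemma~\ref{lemma:boo-dopf-jointly-monic} at the right moment to secure uniqueness of the factorisation. Naturality in $C$ and in the horizontal structure of $\LENS$ is then immediate from the fact that the inclusion $\varphi \colon X \hookrightarrow A$ is a faithful functor.
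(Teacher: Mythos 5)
Your proposal is correct and follows essentially the same route as the paper: the tabulator is the wide subcategory of chosen lifts (identities by \ref{DL2}, composition by \ref{DL3}, discrete opfibration structure by \ref{DL1}), and uniqueness of the induced functor is secured by Lemma~\ref{lemma:boo-dopf-jointly-monic}. Your explicit check of the converse direction (that any $h' \colon C \to X$ yields a cell) is a welcome bit of extra care that the paper leaves implicit.
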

\begin{proof}
Given a delta lens $(f, \varphi) \colon A \rightarrow B$, we must construct a cell in $\LENS$ with the following boundary. 
\begin{equation*}
    \begin{tikzcd}
        \Lambda(f, \varphi)
        \arrow[r, "\pi_{A}"]
        \arrow[d, proarrow, "\id"']
        & 
        A 
        \arrow[d, "{(f, \, \varphi)}", proarrow]
        \\
        \Lambda(f, \varphi)
        \arrow[r, "\pi_{B}"']
        & 
        B
    \end{tikzcd}
\end{equation*}
Let $\Lambda(f, \varphi)$ be wide subcategory of $A$ determined by the chosen lifts, that is, morphisms of the form $\varphi(a, u)$. 
Identities the composition are well-defined by the axioms of a delta lens. 
The functor $\pi_{A} \colon \Lambda(f, \varphi) \rightarrow A$ is the identity-on-objects inclusion of the wide subcategory, 
and we let $\pi_{B} = f\pi_{A}$; this clearly defines a cell in $\LENS$ with the required boundary. 
Moreover, the functor $\pi_{B}$ is a discrete opfibration, since for each object $a \in \Lambda(f, \varphi)$ and morphism $u \colon fa \rightarrow b$ in $B$ (where $f a = \pi_{B} a$), there exists a unique morphism $\varphi(a, u) \colon a \rightarrow a'$ in $\Lambda(f, \varphi)$ such that $\pi_{B}\varphi(a, u) = u$. 

Given a cell in $\LENS$ as below, we must now construct a unique functor $j \colon X \rightarrow \Lambda(f, \varphi)$ such that $\pi_{A}j = h$ (which implies that $\pi_{B}j = k$). 
\begin{equation*}
    \begin{tikzcd}
        X
        \arrow[r, "h"]
        \arrow[d, proarrow, "\id"']
        & 
        A 
        \arrow[d, "{(f, \, \varphi)}", proarrow]
        \\
        X
        \arrow[r, "k"']
        & 
        B
    \end{tikzcd}
\end{equation*}
By the definition of a cell in $\LENS$, we have that $fh = k$ and $\varphi(hx, ku) = hu$ for all morphisms $u \colon x \rightarrow y$ in $X$. 
We define $j \colon X \rightarrow \Lambda(f, \varphi)$ by $jx = hx$ on objects and $j(u \colon x \rightarrow y) = \varphi(ha, ku)$ on morphisms. 
Clearly $\pi_{A}j = h$, and the uniqueness of $j$ follows by Lemma~\ref{lemma:boo-dopf-jointly-monic}, since $\pi_{A}$ is identity-on-objects and $\pi_{B}$ is a discrete opfibration.
\end{proof}

Therefore, tabulator in $\LENS$ constructs for each delta lens $(f, \varphi) \colon A \nrightarrow B$ an identity-on-objects functor $\pi_{A} \colon \Lambda(f, \varphi) \rightarrow A$ such that $f\pi_{A}$ is a discrete opfibration. 

\begin{lemma}
\label{lemma:diagrammatic-lens}
Given a commutative diagram of functors 
\begin{equation*}
    \begin{tikzcd}
        X 
        \arrow[rd, "f\psi"']
        \arrow[r, "\psi"]
        &
        A 
        \arrow[d, "f"]
        \\
        &
        B
    \end{tikzcd}
\end{equation*}
such that $\psi$ is bijective-on-objects and $f\psi$ is a discrete opfibration, there exists a delta lens $(f, \varphi) \colon A \nrightarrow B$ together with an isomorphism $X \cong \Lambda(f, \varphi)$. 
\end{lemma}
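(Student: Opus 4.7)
The plan is to use the two properties of $\psi$ and $f\psi$ to transport the ``unique lift'' structure of the discrete opfibration $f\psi$ across $\psi$, obtaining a (generally non-unique) functorial choice of lifts on $f$. First, for each object $a \in A$, since $\psi$ is bijective-on-objects, there is a unique object $\psi^{-1}(a) \in X$; and for each morphism $u \colon fa \to b$ in $B$, since $f\psi$ is a discrete opfibration, there is a unique morphism $\tilde{u} \colon \psi^{-1}(a) \to x'$ in $X$ with $(f\psi)\tilde{u} = u$. Define $\varphi(a, u) \coloneqq \psi\tilde{u}$.

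Next I would verify the three axioms of a delta lens. \ref{DL1} is immediate from commutativity of the triangle: $f\varphi(a,u) = f\psi\tilde{u} = u$. For \ref{DL2}, observe that $1_{\psi^{-1}(a)}$ is the unique lift of $1_{fa}$, so $\varphi(a, 1_{fa}) = \psi 1_{\psi^{-1}(a)} = 1_a$. For \ref{DL3}, the key observation is that discrete opfibrations are stable under composition of lifts: the composite $\tilde{v} \circ \tilde{u}$ is a lift of $v \circ u$ with the correct domain, hence by uniqueness it \emph{is} the chosen lift; applying $\psi$ gives \ref{DL3}. This step is where all the work lies, but it is routine once the construction of $\varphi$ is in place.

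Finally I would construct the isomorphism $X \cong \Lambda(f, \varphi)$. Recall from the proof of Proposition~\ref{proposition:tabulators} that $\Lambda(f, \varphi)$ is the wide subcategory of $A$ on morphisms of the form $\varphi(a,u)$. Define $j \colon X \to \Lambda(f, \varphi)$ by $jx = \psi x$ on objects and $jw = \psi w$ on morphisms; this lands in $\Lambda(f, \varphi)$ because each morphism $w \colon x \to x'$ of $X$ is itself the unique lift of $(f\psi)w$ along the discrete opfibration starting at $x$, so $\psi w = \varphi(\psi x, (f\psi)w)$. Bijectivity on objects is immediate. For bijectivity on morphisms, surjectivity is by construction, and injectivity follows because if $\psi w = \psi w'$ with $w, w' \colon x \to x'$, then $(f\psi)w = (f\psi)w'$, and uniqueness of lifts in the discrete opfibration $f\psi$ forces $w = w'$. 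The main subtlety to keep in mind is that $\psi$ itself need not be faithful on an arbitrary hom-set, but faithfulness follows from the combination of being bijective on objects with $f\psi$ a discrete opfibration, which is exactly the content of Lemma~\ref{lemma:boo-dopf-jointly-monic}.
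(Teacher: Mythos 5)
Your proposal is correct and follows essentially the same route as the paper: define $\varphi(a,u)$ by transporting the unique lifts of the discrete opfibration $f\psi$ across the object-bijection $\psi$, verify \ref{DL1}--\ref{DL3} via uniqueness of those lifts and functoriality of $\psi$, and identify $X$ with $\Lambda(f,\varphi)$ via $\psi$ (the paper obtains the comparison functor from the tabulator universal property of Proposition~\ref{proposition:tabulators}, whereas you write it down and check invertibility directly, but it is the same functor).
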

\begin{proof}
Since $f\psi \colon X \rightarrow B$ is a discrete opfibration, it admits a unique delta lens structure; we denote its unique choice of lifts by
$
    (x \in X, u \colon f\psi x \rightarrow b \in B) 
     \longmapsto  
    \theta(x, u) \colon x \rightarrow x'
$.

Since $\psi$ is bijective-on-objects, we may then define a delta lens $(f, \varphi) \colon A \nrightarrow B$ where $\varphi(a, u) = \psi\theta(\psi^{-1}a, u)$. 
The axiom \ref{DL1} of a delta lens holds by construction. 
By uniqueness of lifts of the discrete opfibration $f\psi$ and functoriality of $\psi$, we also have that the axioms \ref{DL2} and \ref{DL3} are satisfied.
Finally, since $\varphi(\psi x, f\psi u) = \psi \theta(x, f\psi u) = \psi u$ for all morphisms $u \colon x \rightarrow y$ in $X$, we may apply Proposition~\ref{proposition:tabulators} to obtain a unique functor $j \colon X \rightarrow \Lambda(f, \varphi)$ which is easily shown to be invertible.
\end{proof}

Together Proposition~\ref{proposition:tabulators} and Lemma~\ref{lemma:diagrammatic-lens} imply that delta lenses are \emph{the same} as certain commutative triangles in $\Cat$; 
we now extend this to an equivalence of double categories. 

We define a \emph{diagrammatic delta lens} $(f, \varphi) \colon A \nrightarrow B$ to be a commutative diagram in $\Cat$, as on the left of \eqref{equation:diagrammatic-lens}, such that $\varphi$ is bijective-on-objects and $f\varphi$ is a discrete opfibration. 
The composite of diagrammatic delta lenses $(f, \varphi) \colon A \nrightarrow B$ and $(g, \psi) \colon B \rightarrow C$ is given by $(gf, \varphi \pi_{X}) \colon A \nrightarrow C$, as in the middle of \eqref{equation:diagrammatic-lens}, where $Z$ is the pullback of $f\varphi$ along $\psi$; this is well-defined since bijective-on-objects functors and discrete opfibrations are stable under pullback.
Categories, functors, and diagrammatic delta lenses form a thin (pseudo) double category $\LENS_{\mathrm{d}}$, in which a cell $(h, k) \colon (f, \varphi) \rightarrow (g, \psi)$ is given by a commutative diagram as on the right of \eqref{equation:diagrammatic-lens}, where the functor $j \colon X \rightarrow Y$ is unique, if it exists, by Lemma~\ref{lemma:boo-dopf-jointly-monic}. 
\begin{equation}
\label{equation:diagrammatic-lens}
    \begin{tikzcd}
        X 
        \arrow[rd, "f\varphi"']
        \arrow[r, "\varphi"]
        &
        A 
        \arrow[d, "f"]
        \\
        &
        B
    \end{tikzcd}
    \qquad \qquad
    \begin{tikzcd}
        Z  
        \arrow[r, "\pi_{X}"]
        \arrow[rd, "\pi_{Y}"']
        \arrow[rrd, phantom, "\lrcorner"{pos=0.2}]
        & 
        X 
        \arrow[r, "\varphi"]
        \arrow[rd, "f\varphi"'{pos=0.25}]
        & 
        A 
        \arrow[d, "f"]
        \\
        & 
        Y 
        \arrow[r, "\psi"]
        \arrow[rd, "g\psi"']
        & 
        B 
        \arrow[d, "g"]
        \\
        & 
        & 
        C 
    \end{tikzcd}
    \qquad \qquad \qquad
    \begin{tikzcd}
        X 
        \arrow[r, "j"]
        \arrow[d, "\varphi"']
        &
        Y 
        \arrow[d, "\psi"]
        \\
        A 
        \arrow[r, "h"]
        \arrow[d, "f"']
        & 
        C 
        \arrow[d, "g"]
        \\
        B 
        \arrow[r, "k"']
        & 
        D
    \end{tikzcd}
\end{equation}

\begin{proposition}
\label{proposition:LENS-equivalence}
There is an equivalence of double categories $\LENS \simeq \LENS_{\mathrm{d}}$. 
\end{proposition}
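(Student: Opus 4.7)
The plan is to construct a pair of pseudo double functors
\[
    \Lambda \colon \LENS \longrightarrow \LENS_{\mathrm{d}}
    \qquad \text{and} \qquad
    \Gamma \colon \LENS_{\mathrm{d}} \longrightarrow \LENS
\]
which are the identity on objects and horizontal morphisms, and exhibit mutually inverse bijections (up to canonical isomorphism) on vertical morphisms and cells, then conclude that they form an equivalence of double categories. Since both $\LENS$ and $\LENS_{\mathrm{d}}$ are thin and concrete over $\SQ(\Cat)$, the main content is at the level of vertical morphisms and their composition.

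First, I would define $\Lambda$ on a vertical morphism $(f,\varphi)\colon A\nrightarrow B$ by the tabulator construction of Proposition~\ref{proposition:tabulators}, sending it to the diagrammatic delta lens $\pi_{A}\colon \Lambda(f,\varphi) \to A$ with $f\pi_{A}$ a discrete opfibration (noting that $\pi_{A}$ is identity-on-objects, hence bijective-on-objects). Conversely, I would define $\Gamma$ on a diagrammatic delta lens $\varphi\colon X\to A$ with $f\varphi$ a discrete opfibration via Lemma~\ref{lemma:diagrammatic-lens}, producing a delta lens $(f,\varphi^{\sharp})\colon A\nrightarrow B$ with chosen lifts $\varphi^{\sharp}(a,u)=\varphi\theta(\varphi^{-1}a,u)$. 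The key compatibility is already supplied by Lemma~\ref{lemma:diagrammatic-lens}: we obtain a canonical isomorphism $X\cong \Lambda(f,\varphi^{\sharp})$, and by the uniqueness clause of Proposition~\ref{proposition:tabulators}, $\Lambda(f,\varphi)$ recovers $(f,\varphi)$ up to equality, since its chosen lifts are precisely $\pi_{A}\theta = \varphi$ composed with the identity-on-objects bijection.

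Next, I would check cell compatibility. A cell in $\LENS$ with boundary $(h,k)\colon(f,\varphi)\to(g,\psi)$ is a commutative square preserving chosen lifts; applying $\Lambda$ induces a unique functor $j\colon \Lambda(f,\varphi)\to \Lambda(g,\psi)$ by the universal property of tabulators, yielding a cell in $\LENS_{\mathrm{d}}$. Conversely, a cell in $\LENS_{\mathrm{d}}$ provides a functor $j\colon X\to Y$ which, translated through the $\Gamma$-construction, recovers the lift-preservation equation $h\varphi^{\sharp}(a,u)=\psi^{\sharp}(ha,ku)$. Thinness on both sides ensures that these bijections on cells are strictly compatible with horizontal composition and with identities.

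The main obstacle is vertical composition, because $\LENS$ is strict while $\LENS_{\mathrm{d}}$ is only pseudo: composition in $\LENS_{\mathrm{d}}$ uses a pullback, which is determined only up to canonical isomorphism. Given composable delta lenses $(f,\varphi)$ and $(g,\psi)$ with composite $(gf,\theta)$ where $\theta(a,u)=\varphi(a,\psi(fa,u))$, I would compare $\Lambda(gf,\theta)$ with the pullback composite of $\Lambda(f,\varphi)$ and $\Lambda(g,\psi)$ in $\LENS_{\mathrm{d}}$. Both are identity-on-objects functors into $A$ whose composite with $gf$ is a discrete opfibration; a morphism in the pullback is a compatible pair $(\varphi(a,\psi(fa,u)), \psi(fa,u))$, which matches precisely a morphism of $\Lambda(gf,\theta)$ via the defining formula for $\theta$. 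By Lemma~\ref{lemma:boo-dopf-jointly-monic}, the comparison is a unique isomorphism over $A$, and these comparison isomorphisms assemble into the coherence data of a pseudo double functor. A symmetric argument handles $\Gamma$, and the resulting pseudo-inverses give the equivalence $\LENS\simeq \LENS_{\mathrm{d}}$.
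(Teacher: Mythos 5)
Your proposal is correct and follows essentially the same route as the paper's (sketched) proof: one direction via Lemma~\ref{lemma:diagrammatic-lens}, the other via the tabulator construction of Proposition~\ref{proposition:tabulators}, with the two double functors checked to be mutually inverse up to natural isomorphism. Your treatment of vertical composition and the comparison isomorphisms simply fills in details the paper leaves implicit (the paper also notes that the functor $\LENS_{\mathrm{d}} \to \LENS$ can be taken strict, while only the tabulator direction is pseudo).
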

\begin{proof}[(Sketch)]
By Lemma~\ref{lemma:diagrammatic-lens}, we may construct a (strict) double functor from $\LENS_{\mathrm{d}}$ to $\LENS$. 
By Proposition~\ref{proposition:tabulators}, we may construct a (pseudo) double functor from $\LENS$ to $\LENS_{\mathrm{d}}$ using tabulators. 
We may then check that these double functors are mutually inverse, up to natural isomorphism. 
\end{proof}

\section{The double category of twisted coreflections}
\label{sec:twisted-coreflections}

In this section, we introduce a special kind of split coreflection that we call a \emph{twisted coreflection}.
We begin by recalling basic facts about initial functors and split coreflections. 
We then define twisted coreflections, and construct the double category $\TWCOREF$ of categories, functors, and twisted coreflections. 
In Theorem~\ref{theorem:twisted-coreflection}, we prove that a split coreflection is a twisted coreflection if and only if it satisfies a certain pushout condition. 
Moreover, we show that twisted coreflection is equivalent to a pushout square in $\Cat$ 
\begin{equation*}
    \begin{tikzcd}
        A_{0}
        \arrow[r, "\iota_{A}"]
        \arrow[d, "f'"']
        \arrow[rd, phantom, "\ulcorner" very near end]
        &
        A 
        \arrow[d, "f"]
        \\
        X 
        \arrow[r, "\pi"']
        & 
        B 
    \end{tikzcd}
\end{equation*}
where $A_{0}$ is a discrete category, $\iota_{A}$ is identity-on-objects, and $f'$ is an initial functor. 

\subsection{Initial functors and split coreflections}
\label{subsec:split-coreflections}  
In this subsection, we recall the definition of an initial functor and a split coreflection, and collect some useful results. 

\begin{definition}
\label{definition:initial-functor}
A functor $f \colon A \rightarrow B$ is called \emph{initial} if, for each object $b \in B$, the comma category $f / b$ is connected. 
\end{definition}

Initial functors are closed under composition and stable under pushout. 
We define $\IFUN = \langle \Cat, \IFun \rangle$ to be the double category obtained from $\SQ(\Cat)$ by restricting the vertical morphisms to initial functors. 

\begin{definition}
\label{definition:split-coreflection}
A \emph{split coreflection} $(f \dashv q, \varepsilon) \colon A \nrightarrow B$ is a functor $f \colon A \rightarrow B$ equipped with a functor $q \colon B \rightarrow A$ and a natural transformation $\varepsilon \colon fq \Rightarrow 1_{B}$ such that $qf = 1_{A}$, $q \cdot \varepsilon = 1_{q}$, and $\varepsilon \cdot f = 1_{f}$. 
\end{definition}

In other words, a split coreflection is a coreflective adjunction $f \dashv q$ in which the unit is required to be an identity natural transformation. 
A split coreflection is also commonly called a \emph{left-adjoint-right-inverse} (or \textsc{lari}) in the literature. 
The underlying left adjoint of a split coreflection is both an initial functor and fully faithful. 

The composite of split coreflections $(f \dashv q, \varepsilon) \colon A \nrightarrow B$ and $(g \dashv p, \zeta) \colon B \nrightarrow C$ is given by the triple $(gf \dashv pq, \theta) \colon A \nrightarrow C$ where the component of $\theta \colon gfqp \Rightarrow 1_{C}$ at an object $x \in C$ is given by the morphism $\zeta_{x} \circ g\varepsilon_{px}$. 
Categories, functors, and split coreflections form a thin double category $\COREF = \langle \Cat, \Coref \rangle$ in which a cell with boundary
\begin{equation*}
    \begin{tikzcd}
    A
    \arrow[r, "h"]
    \arrow[d, proarrow, "{(f \, \dashv \, q, \, \varepsilon)}"']
    &
    C
    \arrow[d, proarrow, "{(g \, \dashv \, p, \, \zeta)}"]
    \\
    B
    \arrow[r, "k"']
    &
    D
    \end{tikzcd}
    \qquad = \qquad 
    \begin{tikzcd}[column sep = large]
        A
        \arrow[r, "h"]
        \arrow[d, hook, shift right = 3, "f"']
        \arrow[d, phantom, "\dashv"]
        & 
        C 
        \arrow[d, hook, shift right = 3, "g"']
        \arrow[d, phantom, "\dashv"]
        \\
        B 
        \arrow[r, "k"']
        \arrow[u, shift right = 3, "q"']
        &
        D 
        \arrow[u, shift right = 3, "p"']
    \end{tikzcd}
\end{equation*}
exists if $kf = gh$, $hq = pk$, and $k \cdot \varepsilon = \zeta \cdot k$. 
There is a concrete double functor $\COREF \rightarrow \SQ(\Cat)$, that factors through the double category $\IFUN$, that sends each split coreflection to its underlying left adjoint. 

In any category, split monomorphisms are stable under pushout and split epimorphisms are stable under pullback.
An analogous result also holds for split coreflections in $\Cat$.

\begin{lemma}
\label{lemma:coreflection-stability}
The functor $\dom \colon \Coref \rightarrow \Cat$ is a fibration and an opfibration.  
\end{lemma}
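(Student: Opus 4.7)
The plan is to construct cartesian lifts via pullback in $\Cat$ and cocartesian lifts via pushout in $\Cat$, transporting the coreflection data across each universal property.

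For the fibration structure, given a functor $h \colon A \to C$ and a split coreflection $(g \dashv p, \zeta) \colon C \nrightarrow D$, I would form the pullback $B = A \times_{C} D$ of $p$ along $h$, with projections $q \colon B \to A$ and $k \colon B \to D$. Since $pgh = h$, the pair $(1_A, gh)$ induces a section $f \colon A \to B$ with $qf = 1_A$ and $kf = gh$, and the components $\varepsilon_{(a,d)} := (1_a, \zeta_d)$ are well-typed because $p\zeta_d = 1_{pd} = 1_{ha}$; naturality of this family reduces to naturality of $\zeta$, and the triangle identities $\varepsilon f = 1_f$ and $q\varepsilon = 1_q$ descend from the corresponding identities for $\zeta$. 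The square $(h, k) \colon (f \dashv q, \varepsilon) \to (g \dashv p, \zeta)$ is then a cell by construction, and its cartesianness follows from the universal property of the pullback: any competing cell whose top map factors as $hr$ produces a unique comparison $s \colon B'' \to B$ determined by $qs = rq''$ and $ks = k''$.

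For the opfibration structure, given $h \colon A \to C$ and $(f \dashv q, \varepsilon) \colon A \nrightarrow B$, I would form the pushout $D$ of $f$ along $h$ in $\Cat$, with injections $g \colon C \to D$ and $k \colon B \to D$. Since $hqf = h$, the universal property induces a retraction $p \colon D \to C$ satisfying $pg = 1_C$ and $pk = hq$. To build the counit $\zeta \colon gp \Rightarrow 1_{D}$, I would invoke the same pushout property inside the arrow category $\Cat^{\two}$: the identity transformation $1_{g}$ and the whiskering $k\varepsilon \colon kfq \Rightarrow k$ (noting that $kfq = ghq = gpk$) agree after precomposition with $f$ since $\varepsilon f = 1_{f}$, so they glue to a unique natural transformation $\zeta$ satisfying $\zeta g = 1_{g}$ and $\zeta k = k\varepsilon$. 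The identity $p\zeta = 1_{p}$ then follows by restricting along $g$ and along $k$, using $q\varepsilon = 1_{q}$ in the latter case; the cocartesian property is a final application of the pushout.

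The main obstacle is producing the counit $\zeta$ in the opfibration case: since the pushout category $D$ has no convenient description of its morphisms, defining $\zeta$ on generators and verifying naturality by hand is painful. Realising $\zeta$ as a value of the pushout's universal property inside $\Cat^{\two}$ circumvents this difficulty and reduces every verification to checking the compatibility of two cones on the generators $g$ and $k$.
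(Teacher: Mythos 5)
Your proposal is correct and follows essentially the same route as the paper's (sketched) proof: cartesian lifts are obtained by pulling back the right adjoint, opcartesian lifts by pushing out the left adjoint, and the counit over the pushout is produced from the two-dimensional universal property (which you realise concretely by gluing the cocone $1_{g}$, $k\varepsilon$ into the arrow category of the pushout). The componentwise description of the counit in the pullback case and the verification $1_{gh} = k(\varepsilon \cdot f)$ for the compatibility of the pushout cocone are exactly the checks the paper leaves implicit.
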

\begin{proof}[(Sketch)]
Given a split coreflection $(f \dashv q, \varepsilon) \colon A \nrightarrow B$ and functors $h \colon A \rightarrow C$ and $k \colon D \rightarrow A$, we may construct the following morphisms in $\Coref$, where $B +_{A} C$ is the pushout of $h$ along $f$, and $D \times_{A} B$ is the pullback of $k$ along $q$. 
\begin{equation}
\label{equation:pullback-pushout}
    \begin{tikzcd}[column sep = large]
        A
        \arrow[r, "h"]
        \arrow[d, hook, shift right = 3, "f"']
        \arrow[d, phantom, "\dashv"]
        \arrow[rd, phantom, "\ulcorner" very near end]
        & 
        C 
        \arrow[d, hook, shift right = 3, "\varpi_{C}"']
        \arrow[d, phantom, "\dashv"]
        \\
        B 
        \arrow[r, "\varpi_{B}"']
        \arrow[u, shift right = 3, "q"']
        &
        B +_{A} C  
        \arrow[u, shift right = 3, "{[hq, \, 1_{C}]}"']
    \end{tikzcd}
    \qquad \qquad 
    \begin{tikzcd}[column sep = large]
        D
        \arrow[r, "k"]
        \arrow[d, hook, shift right = 3, "{\langle 1_{D}, \, fk \rangle}"']
        \arrow[d, phantom, "\dashv"]
        & 
        A 
        \arrow[d, hook, shift right = 3, "f"']
        \arrow[d, phantom, "\dashv"]
        \\
        D \times_{A} B 
        \arrow[r, "\pi_{B}"']
        \arrow[u, shift right = 3, "\pi_{D}"']
        \arrow[ru, phantom, "\urcorner" very near start]
        &
        B 
        \arrow[u, shift right = 3, "q"']
    \end{tikzcd}
\end{equation}
Constructing the corresponding counits of the split coreflections $\varpi_{C} \dashv [hq, 1_{C}]$ and $\langle 1_{D}, fk \rangle \dashv \pi_{D}$ involves using the $2$-dimensional universal property of the pushout and pullback in $\Cat$, respectively. 
Showing that these morphisms are opcartesian and cartesian lifts, respectively, for the functor $\dom \colon \Coref \rightarrow \Cat$ also uses these universal properties.
\end{proof}

\begin{remark}
\label{remark:pushouts-are-pullbacks}
Bijective-on-objects functors and fully faithful functors are stable under pullbacks and pushouts along arbitrary functors in $\Cat$. 
Moreover, given a commutative square $k f = g h$ in $\Cat$, if $f$ and $g$ are bijective-on-objects and $h$ and $k$ are fully faithful, then the square is a pullback. 

Therefore, if the functor $h \colon A \rightarrow C$ in \eqref{equation:pullback-pushout} is bijective-on-objects, then the functor $\varpi_{B} \colon B \rightarrow B +_{A} C$ is also bijective-on-objects, and the diagram $hq = [hq, 1_{C}] \varpi_{B}$ is also a pullback square by pullback pasting. 
In other words, opcartesian lifts (or pushouts) of a split coreflection along a bijective-on-objects functor are also cartesian lifts (or pullbacks). 
\end{remark}

\begin{definition}
\label{definition:discrete-category-comonad}
Let $(-)_{0} \colon \Cat \rightarrow \Cat$ denote the \emph{discrete category comonad}, which sends each category~$A$ to the corresponding discrete category $A_{0}$ with the same set of objects, and whose counit component is given by the identity-on-objects functor $\iota_{A} \colon A_{0} \rightarrow A$. 
\end{definition}

A category $X$ has a chosen initial object in each \emph{connected component} if there is a split coreflection from a discrete category. 
The following result tells us that it is equivalent to ask for an initial functor from a discrete category. 

\begin{lemma}
\label{lemma:intial-vs-coreflection}
Let $A_{0}$ be a discrete category. 
A functor $f \colon A_{0} \rightarrow X$ initial if and only if $(f \dashv q, \varepsilon) \colon A_{0} \nrightarrow X$ is a split coreflection.
\end{lemma}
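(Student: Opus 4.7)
The plan is to exploit the fact that $A_{0}$ is discrete to make the comma category $f/b$ essentially trivial. First I would observe that for any functor $f \colon A_{0} \rightarrow X$ out of a discrete category, a morphism in $f/b$ from $(a, u)$ to $(a', u')$ is a morphism $a \rightarrow a'$ in $A_{0}$ satisfying $u' \circ f(\text{--}) = u$, hence must be an identity and force $a = a'$, $u = u'$. Consequently $f/b$ has only identity morphisms, so $f/b$ is connected if and only if it contains exactly one object. Thus \emph{$f$ is initial if and only if for every $b \in X$ there exists a unique pair $(a, u \colon fa \rightarrow b)$}.

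For the forward implication, assume $f$ is initial. For each $b \in X$, let $qb \in A_{0}$ and $\varepsilon_{b} \colon f(qb) \rightarrow b$ be the unique object of $f/b$. Given a morphism $v \colon b \rightarrow b'$ in $X$, the pair $(qb, \, v \circ \varepsilon_{b})$ belongs to $f/b'$, so uniqueness forces $qb = qb'$ and $v \circ \varepsilon_{b} = \varepsilon_{b'}$; I would then define $q$ on morphisms by $qv = 1_{qb}$, making $q$ a functor and $\varepsilon$ a natural transformation by construction. Applying the same uniqueness to the object $(a, 1_{fa}) \in f/fa$ yields both $q(fa) = a$ and $\varepsilon_{fa} = 1_{fa}$, which give the equations $qf = 1_{A_{0}}$ and $\varepsilon \cdot f = 1_{f}$; the remaining triangle identity $q \cdot \varepsilon = 1_{q}$ is immediate from the fact that $q$ sends every morphism of $X$ to an identity.

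For the reverse implication, assume $(f \dashv q, \varepsilon)$ is a split coreflection and fix $b \in X$. Then $(qb, \varepsilon_{b})$ is an object of $f/b$, so it suffices to show uniqueness. Given any $(a, u \colon fa \rightarrow b) \in f/b$, applying $q$ produces a morphism $qu \colon qfa = a \rightarrow qb$ in the discrete category $A_{0}$, forcing $a = qb$ and $qu = 1_{a}$. Naturality of $\varepsilon$ at $u$ combined with $\varepsilon_{fa} = 1_{fa}$ and $fqu = f(1_{a}) = 1_{fa}$ then yields $u = \varepsilon_{b}$, so $f/b$ has a unique object and is therefore connected.

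There is no real obstacle here; the only thing to be careful about is checking that $q$ as defined in the forward direction is genuinely functorial, which relies on the observation that $q$ must be constant on connected components of $X$ (forced by uniqueness inside each $f/b$) and hence sends every morphism to an identity of $A_{0}$.
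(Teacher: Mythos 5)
Your proof is correct, and since the paper states this lemma without proof you have simply supplied the missing argument. The two directions are handled cleanly: the key observation that $f/b$ is a discrete category (because $A_{0}$ is discrete) reduces initiality to the existence of a unique object in each comma category, which is exactly the data and equations of a split coreflection. The forward direction correctly extracts $q$ and $\varepsilon$ from that unique object, with the functoriality and naturality checks falling out of uniqueness as you note; the reverse direction correctly combines $qf = 1_{A_{0}}$, discreteness of $A_{0}$, naturality of $\varepsilon$, and the triangle identity $\varepsilon \cdot f = 1_{f}$ to pin down the unique object of $f/b$. The footnote-style remark at the end, that $q$ is necessarily constant on connected components of $X$, is the right intuition and matches how the paper later uses this lemma (the fibres of $q$ are the connected components of $X$).
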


This lemma implies that each initial functor from a discrete category is fully faithful. 
We may also extend the result to a statement about the morphisms between initial functors. 

\begin{lemma}
\label{lemma:morphisms-of-initial-functors}
Let $A_{0}$ and $C_{0}$ be discrete categories. 
There is a bijective correspondence between cells in $\IFUN$ on the left below and cells in $\COREF$ on the right below. 
\begin{equation*}
    \begin{tikzcd}
        A_{0}
        \arrow[r, "h_{0}"]
        \arrow[d, hook, "f"']
        & 
        C_{0}
        \arrow[d, hook, "g"]
        \\
        B 
        \arrow[r, "k"']
        & 
        D
    \end{tikzcd}
    \qquad \leftrightsquigarrow \qquad
    \begin{tikzcd}[column sep = large]
        A_{0}
        \arrow[r, "h_{0}"]
        \arrow[d, hook, shift right = 3, "f"']
        \arrow[d, phantom, "\dashv"]
        & 
        C_{0}
        \arrow[d, hook, shift right = 3, "g"']
        \arrow[d, phantom, "\dashv"]
        \\
        B 
        \arrow[r, "k"']
        \arrow[u, shift right = 3, "q"']
        &
        D 
        \arrow[u, shift right = 3, "p"']
    \end{tikzcd}
\end{equation*}
\end{lemma}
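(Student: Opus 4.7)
The plan is to exploit the thinness of both $\IFUN$ and $\COREF$, so that the bijection of cells reduces to checking which equations must hold on a given boundary. By Lemma~\ref{lemma:intial-vs-coreflection}, the vertical data on both sides are in canonical bijection; what remains is to show that the $\IFUN$-cell condition $kf = gh_0$ implies, for free, the two additional $\COREF$-cell conditions $h_0 q = pk$ and $k \cdot \varepsilon = \zeta \cdot k$. The reverse direction is trivial since every $\COREF$ cell contains $kf = gh_0$ as part of its data.

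The key preparatory observation I would establish is that for a split coreflection $(g \dashv p, \zeta) \colon C_0 \nrightarrow D$ with $C_0$ discrete and any object $d \in D$, the object $gpd$ is the unique object in the image of $g$ lying in the connected component of $d$, and $\zeta_d \colon gpd \rightarrow d$ is the unique morphism from $gpd$ to $d$. This follows directly from the hom-set adjunction $\mathrm{Hom}_{D}(gc, d) \cong \mathrm{Hom}_{C_0}(c, pd)$, whose right-hand side is a singleton when $c = pd$ and empty otherwise; as an immediate byproduct, $g$ is injective on objects (since $pg = 1_{C_0}$).

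With this tool in hand, the first condition follows by an object-level calculation: because $C_0$ is discrete, it suffices to check $h_0 qb = pkb$ for every $b \in B$. Applying $k$ to the counit $\varepsilon_{b} \colon fqb \rightarrow b$ and using $kf = gh_0$ yields a morphism $g(h_0 qb) \rightarrow kb$ in $D$, which by the uniqueness principle above pins down $h_0 qb = pkb$. The second condition then follows at once: both $k\varepsilon_{b}$ and $\zeta_{kb}$ are morphisms $gpkb \rightarrow kb$ in $D$, hence coincide by the same uniqueness argument.

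The main obstacle is conceptual rather than computational: one must recognise that a split coreflection from a discrete category carries no genuinely new data beyond its underlying initial functor, since the right adjoint and counit are pointwise forced by the initial-object property of $g$ in each connected component of $D$. Once this principle is internalised, the lemma reduces to a short string of uniqueness arguments in $D$, and the bijective correspondence of cells is essentially automatic.
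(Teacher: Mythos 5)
The paper states Lemma~\ref{lemma:morphisms-of-initial-functors} without proof, so there is no official argument to compare against; your proposal is a sound, self-contained argument filling that gap, and it uses exactly the elementary means one would expect.

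Your argument is correct. The reduction to a comparison of cell-conditions is exactly right: both double categories are thin, and by Lemma~\ref{lemma:intial-vs-coreflection} the vertical boundary data are already in canonical bijection (each initial $g \colon C_0 \hookrightarrow D$ carries a \emph{unique} split coreflection structure, determined pointwise since $g/d$ is a connected discrete category), so the lemma amounts to showing that the single $\IFUN$-equation $kf = gh_0$ forces the two extra $\COREF$-equations. The key uniqueness principle you isolate---that for $C_0$ discrete the adjunction isomorphism $\mathrm{Hom}_D(gc, d) \cong \mathrm{Hom}_{C_0}(c, pd)$ makes $\mathrm{Hom}_D(gc, d)$ a singleton precisely when $c = pd$ and empty otherwise, with $\zeta_d$ the unique inhabitant---is exactly what does the work. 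From there, hitting $\varepsilon_b \colon fqb \rightarrow b$ with $k$ produces a morphism $g(h_0 q b) \rightarrow kb$, which forces $h_0 q b = p k b$ objectwise (and hence as functors, since the codomain $C_0$ is discrete), and then both $k\varepsilon_b$ and $\zeta_{kb}$ live in the same one-element hom-set $\mathrm{Hom}_D(gpkb, kb)$ and must coincide. The reverse direction is, as you say, just the forgetful map. One small remark: you could state the "uniqueness principle" a little more economically as the two facts you actually use (emptiness/singleton-ness of the relevant hom-sets) rather than phrasing it via connected components, but the content is the same and nothing is missing.
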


\subsection{Twisted coreflections}
\label{subsec:twisted-coreflections}

In this section, we introduce the new notion of a twisted coreflection as a split coreflection with a certain property. 
We demonstrate that twisted coreflections are closed under composition, and construct the thin double category $\TWCOREF$ of categories, functors, and twisted coreflections. 

\begin{definition}
\label{definition:twisted-coreflection}
A \emph{twisted coreflection} is a split coreflection $(f \dashv q, \varepsilon) \colon A \nrightarrow B$ such that if $q(u \colon x \rightarrow y) \neq 1$, then there exists a unique morphism $\qbar u \colon x \rightarrow fqx$ such that $\qbar u \circ \varepsilon_{x} = 1_{fqx}$ and $u = \varepsilon_{y} \circ fqu \circ \qbar u$. 
\end{definition}

The key difference between twisted coreflections and split coreflections is the treatment of naturality squares.
The two types of naturality square for a twisted coreflection $(f \dashv q, \varepsilon)$ are depicted below: $qu \neq 1$ on the left, and $qu = 1$ on the right.\footnote{Note that, since $f$ is a split monomorphism in $\Cat$, the inequalities $qu \neq 1$ and $fqu \neq 1$ are equivalent.}
\begin{equation*}
    \begin{tikzcd}
        fqx 
        \arrow[r, "fqu"]
        \arrow[d, shift right = 2, "\varepsilon_{x}"']
        &
        fqy 
        \arrow[d, "\varepsilon_{y}"]
        \\
        x 
        \arrow[r, "u"']
        \arrow[u, dashed, shift right = 2, "\exists! \, \qbar u"']
        & 
        y
    \end{tikzcd}
    \qquad \qquad
    \begin{tikzcd}
        fqx 
        \arrow[r, equal]
        \arrow[d, "\varepsilon_{x}"']
        &
        fqy 
        \arrow[d, "\varepsilon_{y}"]
        \\
        x 
        \arrow[r, "u"']
        & 
        y
    \end{tikzcd}
\end{equation*}

Unlike split coreflections, which occur frequently in category theory, it is difficult to find naturally occurring examples of twisted coreflections. 
In Section~\ref{subsec:diagrammatic-twisted-coreflections}, we provide a method for constructing \emph{every} example of a twisted coreflection. 
For the moment we demonstrate a few simple examples. 

\begin{example}
A split coreflection $(f \dashv q, \varepsilon) \colon A_{0} \nrightarrow X$ is a twisted coreflection, since the right adjoint sends every morphism to an identity morphism (see also Lemma~\ref{lemma:intial-vs-coreflection}). 
\end{example}

\begin{example}
The functor $\delta_{2} \colon \two \rightarrow \three$ admits a unique twisted coreflection structure, with right adjoint given by $\sigma_{1} \colon \three \rightarrow \two$ where $\sigma_{1}(0) = 0$ and $\sigma_{1}(1) = \sigma_{1}(2) = 1$.
Therefore, every vertical morphism in $\JJ_{\lens}$ admits a twisted coreflection structure (see Theorem~\ref{theorem:cofibrant-generation}).
\end{example}

\begin{example}
Consider the full embedding of the interval $\two$ into the category $B$ generated by the directed graph as illustrated.
    \begin{equation*}
        \begin{tikzcd}
            \two
            \arrow[d, hook]
            &
            & 
            \bullet
            \arrow[d, phantom, "\vdots"]
            \arrow[r]
            &
            \bullet
            \arrow[d, phantom, "\vdots"]
            \\
            B
            &
            \bullet
            \arrow[r, shift right = 1, "r"']
            &
            \bullet
            \arrow[l, shift right = 1, "s"']
            \arrow[r, "u"]
            &
            \bullet
        \end{tikzcd}
        \qquad 
        \text{where} 
        \quad 
        r \circ s = 1
    \end{equation*} 
This functor admits the structure of a twisted coreflection uniquely. 
The morphisms $u$ and $ur$ in $B$ are sent by the right adjoint to the non-identity morphism in $\two$, and we can easily check that the conditions for a twisted coreflection hold.
\end{example}

\begin{proposition}
\label{proposition:twisted-coreflections-compose}
Twisted coreflections are closed under composition.
\end{proposition}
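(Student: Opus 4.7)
The strategy is to verify Definition~\ref{definition:twisted-coreflection} directly for the composite split coreflection. Let $(f \dashv q, \varepsilon) \colon A \nrightarrow B$ and $(g \dashv p, \zeta) \colon B \nrightarrow C$ be twisted coreflections, with composite split coreflection $(gf \dashv qp, \theta) \colon A \nrightarrow C$ whose counit is $\theta_{x} = \zeta_{x} \circ g\varepsilon_{px}$. Given $u \colon x \rightarrow y$ in $C$ with $qpu \neq 1$, I must produce a unique morphism $\qbar u \colon x \rightarrow gfqpx$ satisfying $\qbar u \circ \theta_{x} = 1_{gfqpx}$ and $u = \theta_{y} \circ gfqpu \circ \qbar u$.

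For existence, the key observation is that $qpu \neq 1$ forces $pu \neq 1$ in $B$ (since $q$ is a functor), so both twisted coreflection hypotheses apply. Applying the property of $(g \dashv p, \zeta)$ to $u$ yields a unique $\qbar u^{\zeta} \colon x \rightarrow gpx$ with $\qbar u^{\zeta} \circ \zeta_{x} = 1$ and $u = \zeta_{y} \circ gpu \circ \qbar u^{\zeta}$; applying the property of $(f \dashv q, \varepsilon)$ to $pu$ yields a unique $\overline{pu} \colon px \rightarrow fqpx$ with $\overline{pu} \circ \varepsilon_{px} = 1$ and $pu = \varepsilon_{py} \circ fqpu \circ \overline{pu}$. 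I then propose $\qbar u = g\overline{pu} \circ \qbar u^{\zeta}$ and verify both required equations by direct substitution, using the triangle identities $p \cdot \zeta = 1_{p}$, $q \cdot \varepsilon = 1_{q}$, and $pg = 1_{B}$ together with the two factorisations above.

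For uniqueness, suppose $\bar v \colon x \rightarrow gfqpx$ is any candidate satisfying the two conditions. Applying $p$ to both conditions and simplifying via the triangle identities yields $p\bar v \circ \varepsilon_{px} = 1$ and $pu = \varepsilon_{py} \circ fqpu \circ p\bar v$, so the uniqueness clause of the twisted coreflection $(f \dashv q, \varepsilon)$ forces $p\bar v = \overline{pu}$. Naturality of $\zeta$ at $\bar v$, combined with the identity $\zeta_{gb} = 1_{gb}$ (a consequence of $p \cdot \zeta = 1_{p}$ together with $g$ being fully faithful), gives $\bar v \circ \zeta_{x} = g\overline{pu}$. Invoking the twisted coreflection factorisation for $(g \dashv p, \zeta)$ on $\bar v$ then writes $\bar v = g\overline{pu} \circ \bar w$ for a unique retraction $\bar w$ of $\zeta_{x}$; substituting into the second condition on $\bar v$ and cancelling using the factorisation of $pu$ shows that $\bar w$ satisfies the defining equations of $\qbar u^{\zeta}$, forcing $\bar w = \qbar u^{\zeta}$ and hence $\bar v = \qbar u$.

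The main obstacle is organising the book-keeping cleanly, as several naturality squares, triangle identities, and twisted factorisations must be interleaved in the uniqueness argument. A minor edge case arises when $\overline{pu} = 1_{fqpx}$, so that the twisted factorisation of $\bar v$ via $(g \dashv p, \zeta)$ is not directly invoked; in that situation, ordinary naturality of $\zeta$ already yields $\bar v \circ \zeta_{x} = 1$, making $\bar v$ itself a retraction of $\zeta_{x}$ which satisfies the defining property of $\qbar u^{\zeta}$, recovering the same conclusion $\bar v = \qbar u^{\zeta} = \qbar u$.
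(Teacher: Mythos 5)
Your proposal is correct and follows essentially the same route as the paper: the candidate lift $g\overline{pu}\circ\qbar u^{\zeta}$ is exactly the paper's $g\qbar pu\circ\overline{p}u$, and the uniqueness argument proceeds identically by first applying $p$ to pin down $p\bar v$ via the uniqueness clause for $(f\dashv q,\varepsilon)$ and then factoring $\bar v$ through $(g\dashv p,\zeta)$. Your explicit treatment of the edge case $\overline{pu}=1_{fqpx}$ (where $p\bar v=1$ and the second twisted-coreflection clause cannot be invoked directly) is a welcome extra, as the paper's proof passes over this case silently.
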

\begin{proof}
Given a pair of twisted coreflections $(f \dashv q, \varepsilon) \colon A \nrightarrow B$ and $(g \dashv p, \zeta) \colon B \nrightarrow C$, we want to show that the composite $(gf \dashv pq, \theta) \colon A \nrightarrow C$ of their underlying split coreflections (as defined in Section~\ref{subsec:split-coreflections}) is also a twisted coreflection. 
There are two parts of the proof: existence and uniqueness. 

Given a morphism $u \colon x \rightarrow y$ in $C$ such that $qpu \neq 1$, and thus $pu \neq 1$, there exists a morphism $g\qbar pu \circ \overline{p}u \colon x \rightarrow gfqpx$ that satisfies the required conditions, as shown in the diagram below. 
It remains to be shown that this is the \emph{unique} morphism such that the required conditions hold. 
\begin{equation}
\label{equation:proof1}
    \begin{tikzcd}[column sep = large]
        gfqpx 
        \arrow[r, "gfqpu"]
        \arrow[d, shift right = 2, "g\varepsilon_{px}"']
        &
        gfqpy
        \arrow[d, "g\varepsilon_{py}"]
        \\
        gpx 
        \arrow[r, "gpu"]
        \arrow[d, shift right = 2, "\zeta_{x}"']
        \arrow[u, dashed, shift right = 2, "g \qbar pu"']
        & 
        gpy 
        \arrow[d, "\zeta_{y}"]
        \\
        x 
        \arrow[r, "u"']
        \arrow[u, dashed, shift right = 2, "\overline{p}u"']
        &
        y
    \end{tikzcd}
\end{equation}

Thus consider a morphism $v \colon x \rightarrow gfqpx$ such that the equations $v \circ (\zeta_{x} \circ g\varepsilon_{px}) = 1$ and $u = (\zeta_{y} \circ g\varepsilon_{py}) \circ gfqpu \circ v$ are satisfied. 
We must show that $v = g\qbar pu \circ \overline{p}u$. 

Applying the functor $p \colon C \rightarrow B$ to the morphism $v$ we find that $pv \circ \varepsilon_{px} = 1$ and $pu = \varepsilon_{py} \circ fqpu \circ pv$. 
Since $(f \dashv q, \varepsilon) \colon A \nrightarrow B$ is a twisted coreflection, we may use uniqueness to conclude that $pv = \qbar p u$, and therefore $gpv = g \qbar p u$. 
Since $(g, p, \zeta) \colon B \nrightarrow C$ is twisted coreflection, 
there exists a unique morphism $\overline{p} v \colon x \rightarrow gpx$ such that $\overline{p} v \circ \zeta_{x} = 1$ and $v = gpv \circ \overline{p} v = g \qbar pu \circ \overline{p}v$ as depicted below. 
\begin{equation}
\label{equation:proof2}
    \begin{tikzcd}
        gpx 
        \arrow[r, "gpv"]
        \arrow[d, shift right = 2, "\zeta_{x}"']
        &
        gfqpx
        \arrow[d, equal]
        \\
        x 
        \arrow[r, "v"']
        \arrow[u, dashed, shift right = 2, "\overline{p} v"']
        & 
        gfqpx
    \end{tikzcd}
\end{equation}
Using the diagrams \eqref{equation:proof1} and \eqref{equation:proof2} and the assumptions on $v$, we have that:
\begin{align*}
    \zeta_{y} \circ gpu \circ \overline{p}v 
    &= \zeta_{y} \circ g\varepsilon_{py} \circ gfqpu \circ g\qbar p u \circ \overline{p} v \\
    &= \zeta_{y} \circ g\varepsilon_{py} \circ gfqpu \circ v \\
    &= u .
\end{align*}
Since we also have that $\overline{p} v \circ \zeta_{x} = 1$ by definition, we may use uniqueness to conclude that $\overline{p} v = \overline{p} u$. 
Thus, we have $v = gpv \circ \overline{p} v = g\qbar p u \circ \overline{p} u$ as required, completing the proof. 
\end{proof}

Let $\TWCOREF = \langle \Cat, \TwCoref \rangle$ denote the double category obtained from $\COREF$ by restricting the vertical morphisms to twisted coreflections. 
There is a concrete double functor $U \colon \TWCOREF \rightarrow \SQ(\Cat)$ that sends a twisted coreflection $(f \dashv q, \varepsilon)$ to $f$. 

\subsection{Diagrammatic approach to twisted coreflections}
\label{subsec:diagrammatic-twisted-coreflections}

In this section, we show that twisted coreflections are equivalent to certain commutative diagrams in $\Cat$, analogous to the results in Section~\ref{subsec:diagrammatic-delta-lenses} for delta lens. 
We call this the \emph{diagrammatic approach} to twisted coreflections, in contrast to the approach taken in Definition~\ref{definition:twisted-coreflection}.
The diagrammatic approach is centred around unpacking a particular pushout of functors in Construction~\ref{construction:pushout}. 
In Proposition~\ref{proposition:twisted-coreflection}, we show that pushouts of initial functors from discrete categories along bijective-on-objects functors yields a twisted coreflections, and in Theorem~\ref{theorem:twisted-coreflection}, we identify a simple criterion for a split coreflection to be a twisted coreflection. 
This culminates in an equivalence of double categories in Proposition~\ref{proposition:TWCOREF-equivalence}.

\begin{construction}
\label{construction:pushout}
We provide an explicit description of the following pushout in $\Cat$, where $f$ is fully faithful and $\iota_{A}$ is the counit component of the discrete category comonad. 
\begin{equation*}
    \begin{tikzcd}
        A_{0}
        \arrow[r, "\iota_{A}"]
        \arrow[d, hook, "f"']
        \arrow[rd, phantom, "\ulcorner" very near end]
        & 
        A 
        \arrow[d, hook, "f'"]
        \\
        X 
        \arrow[r, "\pi"']
        &
        B
    \end{tikzcd}
\end{equation*}

We first describe the objects and morphisms of the category $B$.
Since $\iota_{A} \colon A_{0} \rightarrow A$ is identity-on-objects, and these are stable under pushout, the category $B$ has the same objects as $X$. 
The morphisms $x \rightarrow y$ in $B$ are one of the following two sorts: 
\begin{enumerate}[(S1)]
    \item \label{S1} a morphism $u \colon x \rightarrow y$ in $X$; 
    \item \label{S2} a formal sequence of morphisms, as below, with $u$ and $v$ in $X$, and $w \neq 1$ in $A$.
    \begin{equation}
    \label{equation:morphism-sort-ii}
        \begin{tikzcd}[row sep = tiny]
            & 
            a 
            \arrow[r, "w"]
            \arrow[d, dotted, no head]
            &
            a'
            \arrow[d, dotted, no head]
            &
            \\
            x 
            \arrow[r, "u"]
            & 
            fa 
            &
            fa'
            \arrow[r, "v"]
            &
            y
        \end{tikzcd}
    \end{equation}
\end{enumerate}

The identity morphism on an object $x$ in $B$ is simply the identity morphism $1_{x}$ in $X$ and is a morphism of sort \ref{S1}.
The composition of morphisms in $B$ is given as follows. 
\begin{itemize}[leftmargin=*]
    \item Given a composable pair of morphisms of sort \ref{S1}, their composite is again a morphism of sort \ref{S1} and is determined by their composition in $X$. 
    \item Given a composable pair of morphisms where one of sort \ref{S1} and the other of sort \ref{S2}, their composite is of sort \ref{S2}, and is determined by composition in $X$. 
    \item The composition a pair of morphisms of sort \ref{S2}, as depicted below, is more subtle. 
    \begin{equation}
    \label{equation:formal-composition}
        \begin{tikzcd}[row sep = tiny]
            &
            a_{1}
            \arrow[r, "w_{1}"]
            \arrow[d, dotted, no head]
            &
            a'_{1}
            \arrow[d, dotted, no head]
            & 
            &
            a_{2}
            \arrow[r, "w_{2}"]
            \arrow[d, dotted, no head]
            &
            a'_{2}
            \arrow[d, dotted, no head]
            & 
            \\
            x 
            \arrow[r, "u_{1}"]
            & 
            fa_{1} 
            &
            fa'_{1}
            \arrow[r, "v_{1}"]
            &
            y
            \arrow[r, "u_{2}"]
            &
            fa_{2}
            &
            fa'_{2}
            \arrow[r, "v_{2}"]
            &
            z
        \end{tikzcd}
    \end{equation}
    Since $f \colon A_{0} \rightarrow X$ is fully faithful, we have that the composite $u_{2} \circ v_{1}$ is an identity morphism in $X$, and thus ``disappears'', leaving the following composable sequence.
    \begin{equation*}
        \begin{tikzcd}[row sep = tiny]
            &
            a_{1}
            \arrow[r, "w_{1}"]
            \arrow[d, dotted, no head]
            &
            a'_{1} = a_{2}
            \arrow[r, "w_{2}"]
            &
            a'_{2}
            \arrow[d, dotted, no head]
            &
            \\
            x 
            \arrow[r, "u_{1}"]
            & 
            fa_{1} 
            &
            &
            fa'_{2}
            \arrow[r, "v_{2}"]
            &
            z
        \end{tikzcd}
    \end{equation*}
    We may now consider the morphism $w_{2} \circ w_{1}$, which is determined by composition in~$A$. 
    If $w_{2} \circ w_{1} = 1$, then the composite \eqref{equation:formal-composition} is of sort \ref{S1} and is given by the morphism $v_{2} \circ u_{1} \colon x \rightarrow z$ in $X$. 
    If $w_{2} \circ w_{1} \neq 1$, then the composite \eqref{equation:formal-composition} is of sort \ref{S2}, and is given by the formal sequence of morphisms below. 
    \begin{equation*}
        \begin{tikzcd}[row sep = tiny]
            & 
            a_{1}
            \arrow[r, "w_{2} \, \circ \, w_{1}"]
            \arrow[d, dotted, no head]
            &
            a'_{2}
            \arrow[d, dotted, no head]
            &
            \\
            x 
            \arrow[r, "u_{1}"]
            & 
            fa_{1} 
            &
            fa'_{2}
            \arrow[r, "v_{2}"]
            &
            z
        \end{tikzcd}
    \end{equation*}
\end{itemize}

The identity-on-objects functor $\pi$ sends each morphism in $X$ to the corresponding morphism of sort \ref{S1} in~$B$. 
The fully faithful functor $f'$ has action on objects $a \mapsto fa$, sends each morphism $w \neq 1$ in $A$ to the corresponding morphism of sort \ref{S2} in $B$, and each identity on $a \in A$ to the identity on $fa \in B$ of sort \ref{S1}. 
\end{construction}

Using this explicit description of the pushout, we now prove a certain commutative diagram in $\Cat$ give rise to twisted coreflections, analogous to Lemma~\ref{lemma:diagrammatic-lens} for delta lenses. 

\begin{proposition}
\label{proposition:twisted-coreflection}
Given a pushout diagram of functors 
\begin{equation}
\label{equation:twisted-coreflection}
\begin{tikzcd}
    A_{0}
    \arrow[r, "\iota_{A}"]
    \arrow[d, hook, "f"']
    \arrow[rd, phantom, "\ulcorner" very near end]
    & 
    A 
    \arrow[d, hook, "f'"]
    \\
    X 
    \arrow[r, "\pi"']
    &
    B
\end{tikzcd}
\end{equation}
such that $\iota_{A}$ is the counit component of the discrete category comonad at $A$ and $f$ is an initial functor, there exists a twisted coreflection $(f' \dashv q', \varepsilon') \colon A \nrightarrow B$ together with an isomorphism $X \cong \sum_{a \in A_{0}} q'^{-1}\{a\}$. 
\end{proposition}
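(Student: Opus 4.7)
The plan is to exploit the explicit description of the pushout category $B$ from Construction~\ref{construction:pushout}, combined with Lemma~\ref{lemma:intial-vs-coreflection}, which tells us that the initial functor $f \colon A_{0} \to X$ from a discrete category canonically extends to a split coreflection $(f \dashv q, \varepsilon) \colon A_{0} \nrightarrow X$. First, I would produce the split coreflection $(f' \dashv q', \varepsilon') \colon A \nrightarrow B$ by invoking Lemma~\ref{lemma:coreflection-stability}: the opcartesian lift of $(f \dashv q, \varepsilon)$ along $\iota_{A} \colon A_{0} \to A$ is precisely such a split coreflection on the pushout functor $f'$. Unwinding this via Construction~\ref{construction:pushout}, the right adjoint $q'$ acts on objects as $q$ does, sends each sort-\ref{S1} morphism to an identity in $A$, and sends each sort-\ref{S2} morphism $(u_{1}, w, v)$ to $w$; the counit $\varepsilon'_{x}$ is $\pi(\varepsilon_{x})$ viewed as a sort-\ref{S1} morphism.

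Next, I would verify the twisted coreflection property. Take $u \colon x \to y$ in $B$ with $q'u \neq 1$. Since $q'$ sends sort-\ref{S1} morphisms to identities, $u$ must be of sort \ref{S2}, say $u = (u_{1}, w, v)$ with $w = q'u$ and $a = qx$, $a' = qy$. Naturality of $\varepsilon$ applied to $v \colon fa' \to y$, combined with the triangle identity $\varepsilon_{fa'} = 1$ and the fact that $qv$ is necessarily an identity in the discrete category $A_{0}$, forces $v = \varepsilon_{y}$. Setting $\qbar u = \pi(u_{1}) \colon x \to fqx$ as a sort-\ref{S1} morphism, the same naturality argument gives $u_{1} \circ \varepsilon_{x} = 1_{fa}$, hence $\qbar u \circ \varepsilon'_{x} = 1_{fqx}$. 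The composition rules of Construction~\ref{construction:pushout} then yield $\varepsilon'_{y} \circ f'q'u \circ \qbar u = (u_{1}, w, \varepsilon_{y}) = u$, as required.

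For uniqueness, if $\bar{v} \colon x \to fqx$ also satisfies $\bar{v} \circ \varepsilon'_{x} = 1$, applying $q'$ yields $q'\bar{v} = 1$, so $\bar{v}$ must be of sort \ref{S1}, say $\pi(u_{1}')$ for some $u_{1}' \colon x \to fqx$ in $X$. The second equation then presents $u$ as the sort-\ref{S2} triple $(u_{1}', w, \varepsilon_{y})$, and the unique representation of morphisms in Construction~\ref{construction:pushout} forces $u_{1}' = u_{1}$, hence $\bar{v} = \qbar u$. Finally, the isomorphism $X \cong \sum_{a \in A_{0}} q'^{-1}\{a\}$ follows because a morphism in a fibre $q'^{-1}\{a\}$ must be of sort \ref{S1} with both endpoints mapped to $a$ by $q$; each fibre is thus the full subcategory of $X$ on objects $x$ with $qx = a$, and $q$ partitions the objects of $X$ accordingly. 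The main obstacle is the careful bookkeeping around the unique sort-\ref{S2} representations from Construction~\ref{construction:pushout}, which is what ultimately drives both the existence and uniqueness of $\qbar u$.
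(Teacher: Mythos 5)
Your proposal is correct and follows essentially the same route as the paper's proof: construct the split coreflection on $A_{0}\nrightarrow X$ via Lemma~\ref{lemma:intial-vs-coreflection}, push it forward with Lemma~\ref{lemma:coreflection-stability}, and then use the explicit sort-\ref{S1}/sort-\ref{S2} description of Construction~\ref{construction:pushout} together with naturality of the counit to identify $v=\varepsilon_{y}$ and pin down $\qbar u$ uniquely. The only cosmetic difference is that you argue the fibre decomposition $X \cong \sum_{a \in A_{0}} q'^{-1}\{a\}$ by hand, whereas the paper cites Remark~\ref{remark:pushouts-are-pullbacks}; both are fine.
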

\begin{proof}
We use the same notation as in Construction~\ref{construction:pushout}, where we provided a description of the category $B$ and the functors $f' \colon A \rightarrow B$ and $\pi \colon X \rightarrow B$. 

By Lemma~\ref{lemma:intial-vs-coreflection}, there is a (unique)
split coreflection $(f \dashv q, \varepsilon) \colon A_{0} \nrightarrow X$, and since split coreflections are stable under pushout by Lemma~\ref{lemma:coreflection-stability}, there is a split coreflection $(f' \dashv q', \varepsilon') \colon A \nrightarrow B$ such \eqref{equation:twisted-coreflection} underlies a cell in $\COREF$. 
Since the pullback of $q' \colon B \rightarrow A$ along the functor $\iota_{A} \colon A_{0} \rightarrow A$ yields the coproduct of the fibres of $q'$, it follows immediately by Remark~\ref{remark:pushouts-are-pullbacks} that $X \cong \sum_{a \in A_{0}} q'^{-1}\{a\}$. 

We now explicitly define the split coreflection $(f' \dashv q', \varepsilon') \colon A \nrightarrow B$, and show that it satisfies the conditions of twisted coreflection. 
The functor $q' \colon B \rightarrow A$ acts the same as $q \colon X \rightarrow A_{0}$ on objects and morphisms of sort \ref{S1}, while sending a morphism \eqref{equation:morphism-sort-ii} of sort \ref{S2} to $w \colon a \rightarrow a'$ in $A$. 
The natural transformation $\varepsilon' \colon f'q' \Rightarrow 1_{B}$ has the same components as $\varepsilon \colon fq \Rightarrow 1_{X}$. 
Explicitly, the component of $ \varepsilon'$ at an object $x$ in $B$ is given by $\varepsilon_{x} \colon fqx \rightarrow x$ which is a morphism of sort \ref{S1} in $B$. 

To show that this is a twisted coreflection, consider a morphism \eqref{equation:morphism-sort-ii} of sort \ref{S2}, which is precisely a morphism that is sent by $q'$ to a non-identity morphism $w \colon a \rightarrow a'$ in $A$. 
Naturality states that the following two morphisms of sort \ref{S2} are equal.
\begin{equation*}
    \begin{tikzcd}[row sep = tiny] 
        &
        a 
        \arrow[r, "w"]
        \arrow[d, dotted, no head]
        &
        a'
        \arrow[d, dotted, no head]
        &
        \\
        fqx 
        \arrow[r, "u \, \circ \, \varepsilon_{x}"]
        & 
        fa 
        &
        fa'
        \arrow[r, "v"]
        &
        y
    \end{tikzcd}
    \qquad = \qquad
    \begin{tikzcd}[row sep = tiny]
        & 
        a 
        \arrow[r, "w"]
        \arrow[d, dotted, no head]
        &
        a'
        \arrow[d, dotted, no head]
        &
        \\
        fqx
        \arrow[r, equal]
        & 
        fa 
        &
        fqy
        \arrow[r, "\varepsilon_{y}"]
        &
        y
    \end{tikzcd}
\end{equation*}
This implies that $qx = a$, $qy = a'$, and $v = \varepsilon_{y}$ for each morphism \eqref{equation:morphism-sort-ii}.
Moreover, if we denote the morphism \eqref{equation:morphism-sort-ii} by $s \colon x \rightarrow y$, then there is a unique morphism $u \colon x \rightarrow fa$ such that $u \circ \varepsilon_{x} = 1_{fa}$, and $s = v \circ w \circ u = \varepsilon_{y} \circ q's \circ u$. 
Therefore, $(f' \dashv q', \varepsilon')$ is a twisted coreflection as required.
\end{proof}

\begin{example}
\label{example:pasting-initial-functors}
Given a category $A$, for each object $a \in A$, choose a category $F_{a}$ with an initial object $0_{a} \in F_{a}$. 
Let $X = \sum_{a \in A_{0}} F_{a}$, and let $f \colon A_{0} \rightarrow X$ denote the initial functor that selects the initial object in each connected component of $X$, that is, $fa = 0_{a}$. 
Then the pushout \eqref{equation:twisted-coreflection} glues each category $F_{a}$ to $A$ via the identification $a \sim 0_{a}$, yielding the category $B$ and a twisted coreflection $(f' \dashv q', \varepsilon') \colon A \nrightarrow B$. 
\end{example}

Proposition~\ref{proposition:twisted-coreflection} shows how to obtain twisted coreflections as certain pushout diagrams in $\Cat$. 
We now show that every twisted coreflection arises in this way.
Given a split coreflection $(f \dashv q, \varepsilon) \colon A \nrightarrow B$, we may construct the pullback \eqref{equation:split-coreflection-pullback} along $\iota_{A}$ by Lemma~\ref{lemma:coreflection-stability}.
\begin{equation}
\label{equation:split-coreflection-pullback}
    \begin{tikzcd}[column sep = large]
        A_{0}
        \arrow[r, "\iota_{A}"]
        \arrow[d, hook, shift right = 3, "\hat{f}"']
        \arrow[d, phantom, "\dashv"]
        & 
        A 
        \arrow[ld, phantom, "\urcorner" very near end]
        \arrow[d, hook, shift right = 3, "f"']
        \arrow[d, phantom, "\dashv"]
        \\
        \displaystyle\sum_{a \in A_{0}} q^{-1}\{a \}
        \arrow[r, "\hat{\pi}"']
        \arrow[u, shift right = 3, "\hat{q}"']
        &
        B 
        \arrow[u, shift right = 3, "q"']
    \end{tikzcd}
\end{equation}

\begin{theorem}
\label{theorem:twisted-coreflection}
A split coreflection $(f \dashv q, \varepsilon) \colon A \nrightarrow B$ is a twisted coreflection if and only if the commutative diagram \eqref{equation:split-coreflection-pullback} is a pushout. 
\end{theorem}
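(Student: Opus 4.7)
The plan is to establish both directions using Construction~\ref{construction:pushout} together with Proposition~\ref{proposition:twisted-coreflection}. For the backward direction, I would assume \eqref{equation:split-coreflection-pullback} is a pushout and invoke Proposition~\ref{proposition:twisted-coreflection} directly: since $\hat{f}$ arises from a split coreflection out of a discrete category, Lemma~\ref{lemma:intial-vs-coreflection} guarantees it is initial (and fully faithful), so the proposition produces a twisted coreflection on the right vertical. Tracking through Lemma~\ref{lemma:coreflection-stability}, the universal property of the pushout applied to the cocone $(1_A, \iota_A \hat{q})$ forces the produced right adjoint to equal $q$, and the produced counit (with components $\hat{\pi}(\hat{\varepsilon}_b)$) to equal $\varepsilon$, once $\hat{\pi}$ is identified with the inclusion of the wide subcategory of morphisms sent to identities by $q$. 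Hence $(f \dashv q, \varepsilon)$ is itself a twisted coreflection.

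For the forward direction, I would assume $(f \dashv q, \varepsilon)$ is a twisted coreflection and apply Construction~\ref{construction:pushout} (valid because $\hat{f}$ is fully faithful, as noted after Lemma~\ref{lemma:intial-vs-coreflection}) to produce an explicit model $B'$ for the pushout of $\hat{f}$ along $\iota_A$. The cocone $(\hat{\pi}, f)$ on $B$ induces an identity-on-objects comparison functor $\kappa \colon B' \to B$, and the goal reduces to showing that $\kappa$ is fully faithful; this will be the main technical obstacle.

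To verify $\kappa$ is a bijection on hom-sets, I would perform a case analysis on a morphism $u \colon x \to y$ in $B$. When $qu = 1_{qx}$, the morphism $u$ lies in the wide subcategory $\sum_{a \in A_0} q^{-1}\{a\}$ and supplies a unique sort (S1) preimage. When $qu \neq 1$, the twisted coreflection property supplies the unique $\qbar u \colon x \to fqx$ satisfying $\qbar u \circ \varepsilon_x = 1_{fqx}$ and $u = \varepsilon_y \circ fqu \circ \qbar u$; applying $q$ and using $q \cdot \varepsilon = 1_q$ yields $q\qbar u = 1_{qx}$ and $q\varepsilon_y = 1_{qy}$, so the triple $(\qbar u, qu, \varepsilon_y)$ determines a well-defined sort (S2) morphism in $B'$ whose image under $\kappa$ is $u$. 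For the uniqueness of a putative sort (S2) preimage $(u', w, v')$ of $u$: applying $q$ to the composite $\hat{\pi}(v') \circ f(w) \circ \hat{\pi}(u')$ forces $w = qu$; naturality of $\varepsilon$ together with $qu' = 1_{qx}$, $qv' = 1_{qy}$, and $\varepsilon \cdot f = 1_f$ forces $v' = \varepsilon_y$ and $u' \circ \varepsilon_x = 1_{fqx}$; finally, the uniqueness clause in Definition~\ref{definition:twisted-coreflection} forces $u' = \qbar u$. This establishes $\kappa$ as an isomorphism and concludes the proof.
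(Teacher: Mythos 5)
Your proposal is correct and follows essentially the same route as the paper: the backward direction is Proposition~\ref{proposition:twisted-coreflection}, and the forward direction builds the explicit pushout $B'$ via Construction~\ref{construction:pushout} and reduces to showing the identity-on-objects comparison functor into $B$ is fully faithful, which unwinds to exactly the existence-and-uniqueness clause of Definition~\ref{definition:twisted-coreflection}. Your extra care in the backward direction (identifying the induced coreflection structure with the given $(q,\varepsilon)$) and your direct derivation of $v'=\varepsilon_y$ from naturality are minor elaborations of steps the paper treats more tersely.
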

\begin{proof}
By Proposition~\ref{proposition:twisted-coreflection} we have that if \eqref{equation:split-coreflection-pullback} is a pushout, then $(f \dashv q, \varepsilon)$ is a twisted coreflection. 
It remains to be shown that if $(f \dashv q, \varepsilon)$ is a twisted coreflection, then \eqref{equation:split-coreflection-pullback} is a pushout of $(\hat{f} \dashv \hat{g}, \hat{\varepsilon})$ along $\iota_{A}$, in the sense of opcartesian lift, as in Lemma~\ref{lemma:coreflection-stability}. 

We may construct the following diagram in $\Cat$ from \eqref{equation:split-coreflection-pullback} by taking the pushout along $\iota_{A}$ and using the universal property to obtain the identity-on-objects functor $[\hat{\pi}, f] \colon B' \rightarrow B$. 
\begin{equation}
\label{equation:cofree-twisted-coreflection}
    \begin{tikzcd}[column sep = large]
        A_{0}
        \arrow[rd, phantom, "\ulcorner" very near end]
        \arrow[r, "\iota_{A}"]
        \arrow[d, hook, shift right = 3, "\hat{f}"']
        \arrow[d, phantom, "\dashv"]
        & 
        A 
        \arrow[d, hook, shift right = 3, "f'"']
        \arrow[d, phantom, "\dashv"]
        \arrow[r, equal]
        &
        A 
        \arrow[d, hook, shift right = 3, "f"']
        \arrow[d, phantom, "\dashv"]
        \\
        \displaystyle\sum_{a \in A_{0}} q^{-1}\{a \}
        \arrow[r, "\pi"]
        \arrow[u, shift right = 3, "\hat{q}"']
        \arrow[rr, shift right = 4, "\hat{\pi}"']
        &
        B'
        \arrow[u, shift right = 3, "q'"']
        \arrow[r, dashed, "{\lbrack \hat{\pi},\, f \, \rbrack}"]
        &
        B
        \arrow[u, shift right = 3, "q"'] 
    \end{tikzcd}
\end{equation}
Since $(f' \dashv q', \varepsilon') \colon A \rightarrow B'$ is a twisted coreflection by Proposition~\ref{proposition:twisted-coreflection}, we only need to show that comparison functor $\lbrack \hat{\pi}, f\rbrack \colon B' \rightarrow B$ is an isomorphism, and since this functor is already identity-on-objects, it suffices to show that it is fully faithful. 

Using Construction~\ref{construction:pushout}, the category $B'$ may be described as follows.
The objects of~$B'$ are the same as those of $B$. 
The morphisms $x \rightarrow y$ of $B'$ are one of the following two sorts: 
\begin{enumerate}[(S'1)]
    \item a morphism $u \colon x \rightarrow y$ in $B$ such that $qu = 1$; \label{S'1}
    \item a sequence of morphisms, as below, with $u, v \in B$ and $w \in A$ such that $qu = 1$, $qv = 1$, and $w \neq 1$. 
    \begin{equation}
    \label{equation:sort-ii-proof}
        \begin{tikzcd}[row sep = tiny]
            & 
            a 
            \arrow[r, "w"]
            \arrow[d, dotted, no head]
            &
            a'
            \arrow[d, dotted, no head]
            &
            \\
            x 
            \arrow[r, "u"]
            & 
            fa 
            &
            fa'
            \arrow[r, "v"]
            &
            y
        \end{tikzcd} 
    \end{equation} \label{S'2}
\end{enumerate}
The functor $q' \colon B' \rightarrow A$ sends morphisms of sort \ref{S'1} to identities, as determined by $q$, and morphisms of sort \ref{S'2} to their corresponding non-identity component in $A$. 
Since $(f' \dashv q', \varepsilon')$ is a twisted coreflection, it follows that for each morphism \eqref{equation:sort-ii-proof} of sort \ref{S'2}, the equations $u \circ \varepsilon_{x} = 1_{fa}$ and $v = \varepsilon_{y}$ are satisfied.

The identity-on-objects functor $\lbrack \hat{\pi}, f \rbrack \colon B' \rightarrow B$ sends morphisms of sort \ref{S'1} to themselves, and morphisms \eqref{equation:sort-ii-proof} of sort \ref{S'2} to $v \circ fw \circ u \colon x \rightarrow y$. 
This functor is fully faithful if and only if for each morphism $u \colon x \rightarrow y$ in $B$ such that $qu \neq 1$, there exists a unique morphism $\qbar u \colon x \rightarrow fqx$ such that $\qbar u \circ \varepsilon_{x} = 1_{fqx}$ and $u = \varepsilon_{y} \circ fqu \circ \qbar u$. 
But this is precisely what it means for $(f \dashv q, \varepsilon) \colon A \nrightarrow B$ to be a twisted coreflection, completing the proof. 
\end{proof}

\begin{corollary}
\label{corollary:split-to-twisted}
The inclusion of twisted coreflections into split coreflections admits a right adjoint. 
\begin{equation*}
    \begin{tikzcd}
        \TwCoref 
        \arrow[r, shift right = 3, hook]
        \arrow[r, phantom, "\top"]
        &
        \Coref 
        \arrow[l, shift right = 3]
    \end{tikzcd}
\end{equation*}
\end{corollary}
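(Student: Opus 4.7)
The plan is to build the right adjoint $R \colon \Coref \to \TwCoref$ directly from the construction in diagram \eqref{equation:cofree-twisted-coreflection}. Given a split coreflection $\mathcal{S} = (f \dashv q, \varepsilon) \colon A \nrightarrow B$, define $R\mathcal{S} = (f' \dashv q', \varepsilon') \colon A \nrightarrow B'$ to be the twisted coreflection obtained by first taking the pullback of $\mathcal{S}$ along $\iota_{A}$ (yielding the split coreflection $\hat{f} \dashv \hat{q}$ on $A_{0}$) and then pushing this out along $\iota_{A}$; that $R\mathcal{S}$ is indeed a twisted coreflection is exactly Proposition~\ref{proposition:twisted-coreflection}. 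The pushout universal property yields the identity-on-objects comparison functor $[\hat{\pi}, f] \colon B' \to B$, and the pair $(1_{A}, [\hat{\pi}, f])$ assembles into a cell $R\mathcal{S} \to \mathcal{S}$ in $\Coref$, which will serve as the counit at $\mathcal{S}$.

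Since the inclusion $\TwCoref \hookrightarrow \Coref$ is fully faithful, it suffices to verify that this counit cell is universal amongst cells in $\Coref$ from twisted coreflections into $\mathcal{S}$. Given a twisted coreflection $\mathcal{T} = (g \dashv p, \zeta) \colon X \nrightarrow Y$ and a cell $(h, k) \colon \mathcal{T} \to \mathcal{S}$ in $\Coref$, I would appeal to Theorem~\ref{theorem:twisted-coreflection} to express $Y$ as the pushout of $\hat{g} \colon X_{0} \to \sum_{x \in X_{0}} p^{-1}\{x\}$ along $\iota_{X}$. The lift $\tilde{k} \colon Y \to B'$ is then defined by the pushout universal property from the following compatible pair of functors: $f' \circ h \colon X \to B'$ on the $X$-side, and on the fiber side, the functor $\sum_{x \in X_{0}} p^{-1}\{x\} \to B'$ obtained by restricting $k$ (which lands in $\sum_{a \in A_{0}} q^{-1}\{a\}$ by the cell condition $hp = qk$) and postcomposing with $\pi \colon \sum_{a \in A_{0}} q^{-1}\{a\} \to B'$. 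Compatibility on $X_{0}$ holds because both factors send $x$ to $h(x)$.

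It then remains to verify that $[\hat{\pi}, f] \circ \tilde{k} = k$ (checked separately on each factor of the pushout $Y$), that $(h, \tilde{k})$ satisfies the three cell conditions for $\TwCoref$, and that $\tilde{k}$ is the unique such lift. Uniqueness follows from the uniqueness clause of the pushout universal property combined with the requirement $[\hat{\pi}, f] \circ \tilde{k} = k$, which pins down $\tilde{k}$ on both factors. Functoriality of $R$ and the triangle identities follow formally from this universal property, since the inclusion is fully faithful.

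The main obstacle will be verifying the third cell condition $\tilde{k} \cdot \zeta = \varepsilon' \cdot \tilde{k}$ and checking that the fiber-side functor respects composition. Using the explicit description of $B'$ from Construction~\ref{construction:pushout}, the components of $\varepsilon'$ are sort \textrm{S'1} morphisms lying in the fiber subcategory, so the verification reduces to carefully tracking how morphisms of $Y$ decompose through the pushout presentation of Theorem~\ref{theorem:twisted-coreflection} and how $\tilde{k}$ sends these decompositions to the corresponding sort \textrm{S'1} and sort \textrm{S'2} morphisms in $B'$.
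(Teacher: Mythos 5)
Your construction of the right adjoint is exactly the paper's: pull back along $\iota_{A}$, push out along $\iota_{A}$ as in diagram \eqref{equation:cofree-twisted-coreflection}, with counit the comparison functor $[\hat{\pi}, f]$. The paper's proof is a single sentence stating this construction, so your verification of the universal property (via the pushout presentation of the domain twisted coreflection from Theorem~\ref{theorem:twisted-coreflection}) correctly supplies details the paper leaves implicit; the approach is the same and the argument is sound.
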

\begin{proof}
The right adjoint sends a split coreflection $(f \dashv q, \varepsilon) \colon A \nrightarrow B$ to the twisted coreflection $(f' \dashv q', \varepsilon') \colon A \nrightarrow B'$ defined in the diagram \eqref{equation:cofree-twisted-coreflection} by first taking the pullback along $\iota_{A}$ and then taking the pushout along $\iota_{A}$ as described in Remark~\ref{remark:pushouts-are-pullbacks}. 
\end{proof}

Together Proposition~\ref{proposition:twisted-coreflection} and Theorem~\ref{theorem:twisted-coreflection} tell us that twisted coreflections are the same as certain pushout diagrams in $\Cat$. 
This provides a convenient way to work with twisted coreflections, and also explains the unusual property given in Definition~\ref{definition:twisted-coreflection}. 
Moreover, we find that every example of a twisted coreflection arises as in Example~\ref{example:pasting-initial-functors}. 

We define a \emph{diagrammatic twisted coreflection} $(f, f', \pi) \colon A \nrightarrow B$ to be a pushout diagram in $\Cat$, as on the left of \eqref{equation:diagrammatic-twisted-coreflection}, such that $f'$ is an initial functor and where $\iota_{A}$ is the component of the discrete category comonad on $A$, and thus identity-on-objects.
The composite of diagrammatic twisted coreflections $(f, f', \pi_{f}) \colon A \nrightarrow B$ and $(g, g', \pi_{g}) \colon B \nrightarrow C$ is given by $(gf, hf', \pi_{gf}) \colon A \nrightarrow C$, as on the right of \eqref{equation:diagrammatic-twisted-coreflection}, where $B_{0} = X_{0}$ since $\pi_{f}$ is identity-on-objects, $Z$ is the pushout of $\iota_{X}$ along $g'$, and $\pi_{gf}$ is induced by the universal property of the pushout. 
Composition is well-defined by pasting for pushouts. 
\begin{equation}
\label{equation:diagrammatic-twisted-coreflection}
    \begin{tikzcd}
        A_{0}
        \arrow[r, "\iota_{A}"]
        \arrow[d, "f'"']
        \arrow[rd, phantom, "\ulcorner" very near end]
        & 
        A 
        \arrow[d, "f"]
        \\
        X 
        \arrow[r, "\pi"']
        &
        B
    \end{tikzcd}
    \qquad \qquad \qquad
    \begin{tikzcd}
        & 
        A_{0}
        \arrow[r, "\iota_{A}"]
        \arrow[d, "f'"']
        \arrow[rd, phantom, "\ulcorner" very near end]
        &
        A 
        \arrow[d, "f"]
        \\
        B_{0}
        \arrow[r, "\iota_{X}"]
        \arrow[d, "g'"']
        \arrow[rd, phantom, "\ulcorner" very near end]
        & 
        X 
        \arrow[r, "\pi_{f}"']
        \arrow[d, "h"']
        \arrow[rd, phantom, "\ulcorner" very near end]
        & 
        B 
        \arrow[d, "g"]
        \\
        Y 
        \arrow[r, "\pi_{h}"']
        \arrow[rounded corners, to path={-- ([yshift=-1.5ex]\tikztostart.south) -- node[below]{\scriptsize$\pi_{g}$}  ([yshift=-1.5ex]\tikztotarget.south) -- (\tikztotarget)}]{rr}
        & 
        Z 
        \arrow[r, dashed, "\pi_{gf}"']
        & 
        C
    \end{tikzcd}
\end{equation}
Categories, functors, and diagrammatic twisted coreflections form a thin (pseudo) double category $\TWCOREF_{\mathrm{d}}$. A cell $(h, k) \colon (f, f', \pi_{f}) \rightarrow (g, g', \pi_{g})$ is given by a commutative diagram below, where $j \colon X \rightarrow Y$ is unique, if it exists, as $Y$ is a pullback by Proposition~\ref{proposition:twisted-coreflection}. 
\begin{equation*}
    \begin{tikzcd}
        A_{0}
        \arrow[r, "\iota_{A}"]
        \arrow[d, "f'"']
        \arrow[rd, phantom, "\ulcorner" very near end]
        & 
        A 
        \arrow[r, "h"]
        \arrow[d, "f"]
        & 
        C 
        \arrow[d, "g"]
        \\
        X 
        \arrow[r, "\pi_{f}"']
        & 
        B 
        \arrow[r, "k"']
        & 
        D
    \end{tikzcd}
    \qquad = \qquad
    \begin{tikzcd}
        A_{0}
        \arrow[d, "f'"']
        \arrow[r, "h_{0}"]
        & 
        C_{0}
        \arrow[r, "\iota_{C}"]
        \arrow[d, "g'"']
        \arrow[rd, phantom, "\ulcorner" very near end]
        & 
        C 
        \arrow[d, "g"]
        \\
        X 
        \arrow[r, "j"', dashed]
        & 
        Y 
        \arrow[r, "\pi_{g}"']
        & 
        D
    \end{tikzcd}
\end{equation*}

\begin{proposition} 
\label{proposition:TWCOREF-equivalence}
There is an equivalence of double categories $\TWCOREF \simeq \TWCOREF_{\mathrm{d}}$. 
\end{proposition}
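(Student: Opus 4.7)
The plan is to mirror the proof of Proposition~\ref{proposition:LENS-equivalence}, using Proposition~\ref{proposition:twisted-coreflection} to build a strict double functor in one direction, and Theorem~\ref{theorem:twisted-coreflection} together with the pullback construction of \eqref{equation:split-coreflection-pullback} to build a pseudo double functor in the other. The two will be mutually inverse up to natural isomorphism, with the pseudo-ness entering only because pullbacks in $\Cat$ are determined up to canonical isomorphism.

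First I would construct a strict double functor $F \colon \TWCOREF_{\mathrm{d}} \rightarrow \TWCOREF$. On objects and horizontal morphisms $F$ is the identity. On a vertical morphism $(f, f', \pi) \colon A \nrightarrow B$, Proposition~\ref{proposition:twisted-coreflection} produces a canonical twisted coreflection $(f \dashv q', \varepsilon')$ with $q'$ and $\varepsilon'$ read off directly from the pushout description in Construction~\ref{construction:pushout}; this is the value of $F$. Functoriality on cells uses Lemma~\ref{lemma:morphisms-of-initial-functors}, which translates a cell in $\IFUN$ between the left-hand legs of two pushout squares into a cell in $\COREF$ between the resulting split coreflections. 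Strict preservation of vertical composition follows by comparing the explicit pushout-pasting composite of \eqref{equation:diagrammatic-twisted-coreflection} with the composition formula for split coreflections recalled in Section~\ref{subsec:split-coreflections}; the key point is that both produce the same counit $\theta_x = \zeta_x \circ g\varepsilon_{px}$ at each object, because the pushout $Z$ in \eqref{equation:diagrammatic-twisted-coreflection} coincides on the nose with the pullback $\sum_{a} (pq)^{-1}\{a\}$ used to form the composite in $\TWCOREF$.

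In the other direction, I would define a pseudo double functor $G \colon \TWCOREF \rightarrow \TWCOREF_{\mathrm{d}}$. On a vertical morphism $(f \dashv q, \varepsilon) \colon A \nrightarrow B$, form the pullback \eqref{equation:split-coreflection-pullback} to get the discrete category $A_0$, the initial functor $\hat f \colon A_0 \rightarrow \sum_{a \in A_0} q^{-1}\{a\}$ (using Lemma~\ref{lemma:intial-vs-coreflection} together with Lemma~\ref{lemma:coreflection-stability}), and the bijective-on-objects functor $\hat{\pi}$; Theorem~\ref{theorem:twisted-coreflection} guarantees that this square is also a pushout, yielding the desired object of $\TWCOREF_{\mathrm{d}}$. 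On cells, a morphism $(h,k)$ between twisted coreflections induces a canonical morphism between the pullback squares by the universal property of pullbacks, and hence by Lemma~\ref{lemma:morphisms-of-initial-functors} a morphism between the corresponding pushout squares. The coherence isomorphisms for $G$ come from the canonical identifications between iterated pullbacks.

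The main obstacle is checking that $FG$ and $GF$ are naturally isomorphic to the identities, which ultimately reduces to two things: first, that for a twisted coreflection $(f \dashv q, \varepsilon)$, the split coreflection produced by Proposition~\ref{proposition:twisted-coreflection} from the pullback square \eqref{equation:split-coreflection-pullback} agrees with the original, a fact contained in the proof of Theorem~\ref{theorem:twisted-coreflection} where the comparison functor $[\hat{\pi}, f]$ is shown to be an isomorphism; and second, that for a diagrammatic twisted coreflection $(f, f', \pi)$, taking the associated twisted coreflection and then forming the pullback recovers the original pushout square, which is immediate from Remark~\ref{remark:pushouts-are-pullbacks} since $\pi$ is bijective-on-objects and $f$ is fully faithful, so the pushout square is already a pullback. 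Assembling these pointwise isomorphisms into natural isomorphisms of double functors is then a routine verification of horizontal and vertical compatibility.
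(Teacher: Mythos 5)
Your proposal is correct and follows essentially the same route as the paper's (sketched) proof: a strict double functor $\TWCOREF_{\mathrm{d}} \rightarrow \TWCOREF$ via Proposition~\ref{proposition:twisted-coreflection} and the lemmas on initial functors from discrete categories, a pseudo double functor in the other direction via the pullback \eqref{equation:split-coreflection-pullback} and Theorem~\ref{theorem:twisted-coreflection}, and mutual invertibility up to isomorphism from the comparison functor $[\hat{\pi}, f]$ and Remark~\ref{remark:pushouts-are-pullbacks}. You have simply filled in more of the details that the paper leaves to the reader.
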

\begin{proof}[(Sketch)]
By Proposition~\ref{proposition:twisted-coreflection} and Lemma~\ref{lemma:intial-vs-coreflection}, we may construct a (strict) double functor $\TWCOREF_{\mathrm{d}} \rightarrow \TWCOREF$. By Theorem~\ref{theorem:twisted-coreflection}, we may construct a (pseudo) double functor $\TWCOREF \rightarrow \TWCOREF$. 
We may then check that these double functors are mutually inverse, up to natural isomorphism. 
\end{proof}

\section{The AWFS of twisted coreflections and delta lenses}
\label{sec:awfs-twisted-coreflections-delta-lenses}

In this section, we prove that twisted coreflections and delta lenses form an algebraic weak factorisation system on $\Cat$. 
Our proof has three parts. 
First, we describe a lifting operation on the following cospan of double functors.
\begin{equation}
\label{equation:cospan-TWCOREF-LENS}
    \begin{tikzcd}
        \TWCOREF 
        \arrow[r, "U"]
        &
        \SQ(\Cat)
        &
        \LENS
        \arrow[l, "V"']
    \end{tikzcd}
\end{equation}
Second, we show that each functor admits a factorisation as a cofree twisted coreflection followed by a free delta lens.
Finally, we show that the induced double functors 
\[
    \TWCOREF \longrightarrow \LLP(\LENS)
    \qquad \text{and}
    \qquad
    \LENS \longrightarrow \RLP(\TWCOREF)
\]
are invertible. 
Throughout the proof, a typical twisted coreflection $(f \dashv q, \varepsilon) \colon A \nrightarrow B$ and delta lens $(g, \psi) \colon C \nrightarrow D$ will be depicted as diagrams in $\Cat$, via the equivalences $\TWCOREF \rightarrow \TWCOREF_{\mathrm{d}}$ (Proposition~\ref{proposition:TWCOREF-equivalence}) and $\LENS \rightarrow \LENS_{\mathrm{d}}$ (Proposition~\ref{proposition:LENS-equivalence}), as follows. 
\begin{equation}
\label{equation:twisted-coreflection-delta-lens}
    \begin{tikzcd}
        A_{0}
        \arrow[r, "\iota_{A}"]
        \arrow[d, "f'"']
        \arrow[rd, phantom, "\ulcorner" very near end]
        & 
        A 
        \arrow[d, "f"]
        \\
        X 
        \arrow[r, "\pi"']
        &
        B 
    \end{tikzcd}
    \qquad \qquad
    \begin{tikzcd}
        Y 
        \arrow[r, "\psi"]
        \arrow[rd, "g\psi"']
        &
        C 
        \arrow[d, "g"]
        \\
        &
        D
    \end{tikzcd}
\end{equation}

\subsection{The lifting operation}
\label{subsec:lifting-operation-twcoref-lens}
In this subsection, we construct a lifting operation on the cospan of double functors \eqref{equation:cospan-TWCOREF-LENS}. 
We first recall the comprehensive factorisation system due to Street and Walters \cite{StreetWalters1973} (see also Kelly \cite[Section~4.7]{Kelly2005} for a small correction to the original proof). 

\begin{lemma}
\label{lemma:comprehensive-factorisation}
The classes of initial functors and discrete opfibrations form an orthogonal factorisation system on $\Cat$. 
\end{lemma}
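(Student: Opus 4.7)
The plan is to recall the \emph{comprehensive factorisation} of Street and Walters. Given a functor $f \colon A \rightarrow B$, I would construct its factorisation from the functor $\pi_{0}(f/-) \colon B \rightarrow \mathrm{Set}$ sending each object $b$ to the set of connected components of the comma category $f/b$. Applying the Grothendieck construction yields a discrete opfibration $p$ whose codomain is $B$, and $f$ factors through this as the functor $i$ sending each $a \in A$ to $(fa, [(a, 1_{fa})])$. Initiality of $i$ is essentially by construction: for each object $(b, [(a_{0}, u_{0})])$ in the domain of $p$, the relevant comma category $i/(b, [(a_{0}, u_{0})])$ is the connected component $[(a_{0}, u_{0})]$ of $f/b$, which is connected by definition.

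For the orthogonality property, I would consider a commutative square with an initial functor $j \colon A \rightarrow B$ on the left and a discrete opfibration $g \colon C \rightarrow D$ on the right, filled by $h \colon A \rightarrow C$ and $k \colon B \rightarrow D$, and build a diagonal $d \colon B \rightarrow C$ object-by-object: pick any $(a, u \colon ja \rightarrow b)$ in the non-empty connected comma category $j/b$, and define $d(b)$ to be the codomain of the unique $g$-lift of $ku$ starting at $ha$. The main check here is well-definedness on objects, which reduces to showing that any morphism in $j/b$ gives matching $g$-lifts; this uses the compatibility $gh = kj$ together with the uniqueness of lifts along a discrete opfibration, extended along arbitrary zig-zags by connectedness of $j/b$.

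The main obstacle will be organising the bookkeeping for functoriality and uniqueness of $d$. On a morphism $v \colon b \rightarrow b'$ in $B$, the value $d(v)$ is obtained as the unique $g$-lift of $kv$ starting at $d(b)$; preservation of composition and identities then falls out of the uniqueness of lifts for discrete opfibrations. Uniqueness of the whole diagonal reduces to showing that any other diagonal must agree with $d$ at each pair $(a, u)$ in $j/b$, which is again forced by uniqueness of lifts. Closure of the two classes under composition and their containment of all isomorphisms is routine, giving an orthogonal factorisation system as claimed. Since the cited references already handle these verifications, in practice I would write this lemma as a short acknowledgement of the Street--Walters result with Kelly's correction.
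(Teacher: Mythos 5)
Your proposal is correct: the construction via $\pi_{0}(f/-)$ and the Grothendieck construction, together with the diagonal built from unique lifts along the discrete opfibration and well-definedness checked over zig-zags in the connected comma category, is exactly the standard Street--Walters argument. The paper itself gives no proof of this lemma, simply citing Street--Walters (with Kelly's correction), which matches your closing remark about how you would actually write it.
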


Therefore, given a commutative square of functors, as depicted below, such that $f$ is an initial functor and $g$ is a discrete opfibration, 
there exists a unique functor $\ell \colon B \rightarrow C$ such that $\ell f = h$ and $g \ell = k$. 
\begin{equation*}
    \begin{tikzcd}
        A 
        \arrow[r, "h"]
        \arrow[d, "f"']
        & 
        C 
        \arrow[d, "g"]
        \\
        B 
        \arrow[r, "k"']
        \arrow[ru, dashed, "\ell"', "\exists!"]
        & 
        D
    \end{tikzcd}
\end{equation*}

Next we recall a useful fact about bijective-on-objects functors and discrete categories. 

\begin{lemma}
\label{lemma:boo-functor}
The functor $f \colon A \rightarrow B$ is bijective-on-objects if and only if the post-composition function $f_{\ast} \colon \Cat(X_{0}, A) \rightarrow \Cat(X_{0}, B)$ between hom-sets is bijective for each discrete category~$X_{0}$. 
\end{lemma}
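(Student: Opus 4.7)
The plan is to reduce the statement to the corresponding elementary fact about functions between sets, via the bijection between functors out of a discrete category and functions on object sets.

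First I would observe that for any discrete category $X_{0}$ and any category $A$, a functor $X_{0} \rightarrow A$ is completely determined by its action on objects: every morphism of $X_{0}$ is an identity, and functors preserve identities. This yields a natural bijection
\[
    \Cat(X_{0}, A) \;\cong\; \mathrm{Set}(\mathrm{Ob}(X_{0}), \mathrm{Ob}(A)),
\]
and analogously for $B$ in place of $A$. Under these bijections, the function $f_{\ast} \colon \Cat(X_{0}, A) \rightarrow \Cat(X_{0}, B)$ corresponds to post-composition with the object-function $\mathrm{Ob}(f) \colon \mathrm{Ob}(A) \rightarrow \mathrm{Ob}(B)$.

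For the forward direction, if $f$ is bijective-on-objects then $\mathrm{Ob}(f)$ is a bijection of sets, and post-composition by a bijection is a bijection between function-sets out of any set; hence $f_{\ast}$ is bijective for every discrete $X_{0}$. For the converse, specialise to $X_{0} = \mathbf{1}$, the terminal (discrete) category. The natural isomorphism $\mathrm{Set}(\mathrm{Ob}(\mathbf{1}), \mathrm{Ob}(A)) \cong \mathrm{Ob}(A)$ identifies $f_{\ast}$ in this case with the object-function $\mathrm{Ob}(f)$ itself, so bijectivity of $f_{\ast}$ at $X_{0} = \mathbf{1}$ is precisely the statement that $f$ is bijective-on-objects.

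There is no real obstacle here; the content of the lemma is the universal property of discrete categories, which gives the adjunction $(-)_{0} \dashv \mathrm{Ob}(-)$ implicitly underlying the argument. The only thing to be careful about is making explicit that a single test object, namely $\mathbf{1}$, already suffices for the converse, so that one does not need to quantify non-trivially over $X_{0}$ when extracting bijectivity on objects.
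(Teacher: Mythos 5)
Your proof is correct; the paper states this lemma without proof, and your argument — identifying $\Cat(X_{0}, A)$ with functions $\mathrm{Ob}(X_{0}) \rightarrow \mathrm{Ob}(A)$ and testing the converse at $X_{0} = \one$ — is exactly the intended elementary argument. The remark that a single test object suffices is a nice touch but not needed for the statement as quantified.
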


In other words, $f \colon A \rightarrow B$ is a bijective-on-objects functor if and only if for each functor $g \colon X_{0} \rightarrow B$ from a discrete category $X_{0}$, there exists a unique functor $\hat{g} \colon X_{0} \rightarrow A$ such that $f \circ \hat{g} = g$. 
\begin{equation*}
    \begin{tikzcd}
        &
        A 
        \arrow[d, "f"]
        \\
        X_{0}
        \arrow[r, "g"']
        \arrow[ru, dashed, "\exists! \, \hat{g}"]
        & 
        B 
    \end{tikzcd}
\end{equation*}

Using these two facts about discrete opfibrations and bijective-on-objects functors we can construct the lift of a (diagrammatic) twisted coreflection against a (diagrammatic) delta lens.

\begin{proposition}
\label{proposition:lifting-square}
Given a commutative square of functors \eqref{equation:lifting-square} such that $(f \dashv q, \varepsilon)$ is a twisted coreflection and $(g, \psi)$ is a delta lens, there exists a functor $j \colon B \rightarrow C$ such that $jf = h$ and $gj = k$. 
\begin{equation}
\label{equation:lifting-square}
    \begin{tikzcd}
        A 
        \arrow[d, "f"']
        \arrow[r, "h"]
        & 
        C 
        \arrow[d, "g"]
        \\
        B 
        \arrow[r, "k"']
        \arrow[ru, dotted, "j" description]
        &
        D
    \end{tikzcd}
\end{equation}
\end{proposition}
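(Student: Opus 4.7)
The plan is to use the diagrammatic presentations of both vertical morphisms displayed in \eqref{equation:twisted-coreflection-delta-lens} and combine two universal constructions: the bijective-on-objects property of $\psi$ (via Lemma~\ref{lemma:boo-functor}) and the comprehensive factorisation system (Lemma~\ref{lemma:comprehensive-factorisation}) applied to the initial functor $f'$ and the discrete opfibration $g\psi$. Since the bottom-left square of \eqref{equation:twisted-coreflection-delta-lens} is a pushout, it will suffice to construct a functor $\ell \colon X \to C$ satisfying $\ell f' = h\iota_{A}$ and $g\ell = k\pi$, whence the universal property of the pushout supplies a unique $j \colon B \to C$ with $jf = h$ and $j\pi = \ell$.

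First I would produce an object-level lift $\alpha \colon A_{0} \to Y$ of $h\iota_{A} \colon A_{0} \to C$ across the bijective-on-objects functor $\psi \colon Y \to C$; Lemma~\ref{lemma:boo-functor} provides a unique such $\alpha$ with $\psi\alpha = h\iota_{A}$. Using the commutativity $kf = gh$ together with $\pi f' = f\iota_{A}$ from the pushout, it follows that $(g\psi)\alpha = gh\iota_{A} = kf\iota_{A} = k\pi f'$, so that we have a commutative square
\begin{equation*}
    \begin{tikzcd}
        A_{0}
        \arrow[r, "\alpha"]
        \arrow[d, "f'"']
        &
        Y
        \arrow[d, "g\psi"]
        \\
        X
        \arrow[r, "k\pi"']
        \arrow[ru, dashed, "m" description]
        &
        D
    \end{tikzcd}
\end{equation*}
in $\Cat$. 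Since $f'$ is initial and $g\psi$ is a discrete opfibration, Lemma~\ref{lemma:comprehensive-factorisation} yields a unique diagonal filler $m \colon X \to Y$ with $mf' = \alpha$ and $(g\psi)m = k\pi$.

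I then set $\ell = \psi m \colon X \to C$. The required compatibilities are immediate: $\ell f' = \psi m f' = \psi \alpha = h\iota_{A}$ and $g\ell = g\psi m = k\pi$. The universal property of the pushout $B = X +_{A_{0}} A$ now produces a unique functor $j \colon B \to C$ satisfying $jf = h$ and $j\pi = \ell$. Finally, to verify $gj = k$, I observe that $gjf = gh = kf$ and $gj\pi = g\ell = k\pi$, so the uniqueness part of the pushout's universal property forces $gj = k$.

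The main obstacle is conceptual rather than technical: one must recognise that the two ``rigidity'' features packaged into our diagrammatic presentations—bijectivity-on-objects for delta lenses and initiality for twisted coreflections—interact with the ``covering'' features (discrete opfibrations and identity-on-objects pushout legs) exactly as needed to construct the lift via two successive applications of universal properties, avoiding any explicit case analysis on whether $qu = 1$ or $qu \neq 1$ as in Figures~\ref{figure:qu-equals-id} and~\ref{figure:qu-neq-id}. Once this is seen, every step is a direct appeal to a preceding lemma.
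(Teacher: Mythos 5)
Your proposal is correct and follows essentially the same route as the paper's proof: lift $h\iota_{A}$ through the bijective-on-objects functor $\psi$, apply the orthogonality of the initial functor $f'$ against the discrete opfibration $g\psi$, and then invoke the universal property of the pushout to obtain $j$, with $gj = k$ verified by the uniqueness clause of that universal property. The only differences are notational ($\alpha$ and $m$ for the paper's $\hat{h}$ and $\ell$).
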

\begin{proof}
Given a twisted coreflection $(f \dashv q, \varepsilon) \colon A \nrightarrow B$ and a delta lens $(g, \psi) \colon C \nrightarrow D$, we construct explicitly a functor $j \colon B \rightarrow C$ such that $jf = h$ and $gj = k$. 

Using the presentation \eqref{equation:twisted-coreflection-delta-lens} of the twisted coreflection and delta lens as commutative diagrams, by Proposition~\ref{proposition:TWCOREF-equivalence} and Proposition~\ref{proposition:LENS-equivalence}, we may depict the solid commutative square \eqref{equation:lifting-square} as follows. 
\begin{equation*}
    \begin{tikzcd}
        & 
        & 
        Y 
        \arrow[d, "\psi"']
        \arrow[dd, bend left = 50, "g\psi"]
        \\
        A_{0}
        \arrow[r, "\iota_{A}"]
        \arrow[d, "f'"']
        \arrow[rd, phantom, "\ulcorner" very near end]
        \arrow[rru, dashed, bend left = 20, "\hat{h}"]
        & 
        A 
        \arrow[r, "h"]
        \arrow[d, "f"']
        & 
        C 
        \arrow[d, "g"']
        \\
        X 
        \arrow[r, "\pi"']
        & 
        B 
        \arrow[r, "k"']
        & 
        D 
    \end{tikzcd}
\end{equation*}
Since $\psi \colon Y \rightarrow C$ is a bijective-on-objects functor and $A_{0}$ is a discrete category, by Lemma~\ref{lemma:boo-functor}, there exists a unique functor $\hat{h} \colon A_{0} \rightarrow Y$ such that $\psi \hat{h} = h \iota_{A}$, as shown by the dashed arrow above. 

Next, since $f'$ is an initial functor and $g\psi$ is a discrete opfibration, by Lemma~\ref{lemma:comprehensive-factorisation}, there exists a unique functor $\ell \colon X \rightarrow Y$ such that $\ell f' = \hat{h}$ and $g\psi \ell = k \pi$, as shown by the dashed arrow below.
\begin{equation*}
    \begin{tikzcd}
        A_{0}
        \arrow[r, "\hat{h}"]
        \arrow[d, "f'"']
        & 
        Y 
        \arrow[d, "g\psi"]
        \\
        X 
        \arrow[r, "k\pi"']
        \arrow[ru, dashed, "\ell"]
        & 
        D 
    \end{tikzcd}
\end{equation*}

Finally, using the universal property of the pushout, we may construct the functor $j \colon B \rightarrow C$ as follows; this is well-defined since $\psi \ell f' = \psi \hat{h} = h \iota_{A}$. 
\begin{equation*}
    \begin{tikzcd}
        A_{0}
        \arrow[d, "f'"']
        \arrow[r, "\iota_{A}"]
        \arrow[rd, phantom, "\ulcorner" very near end]
        &
        A 
        \arrow[d, "f"]
        \arrow[rdd, bend left = 30, "h"]
        &
        \\
        X 
        \arrow[r, "\pi"']
        \arrow[rd, "\ell"']
        & 
        B 
        \arrow[rd, dotted, "j"]
        &
        \\
        &
        Y 
        \arrow[r, "\psi"']
        & 
        C
    \end{tikzcd}
\end{equation*}
It is clear that $jf = h$ by construction, and it is easy to check that $gj = k$ by applying the universal property of the pushout, thus completing the proof. 
\end{proof}

The lift of a (diagrammatic) twisted coreflection against a (diagrammatic) delta lens constructed in Proposition~\ref{proposition:lifting-square} uses three separate universal properties: 
first the universal property of a bijective-on-objects functor with respect to discrete categories, 
then the universal property of the comprehensive factorisation system, and finally the universal property of the pushout. 
Unsurprisingly, these universal properties allow one to show (with extensive, but straightforward, diagram-chasing) that the chosen lifts satisfy the required horizontal and vertical compatibilities of a lifting operation (see Section~\ref{subsec:lifting-operations}) with respect to the pseudo double categories $\TWCOREF_{\mathrm{d}}$ and $\LENS_{\mathrm{d}}$ of ``diagrammatic'' twisted coreflections and delta lenses over $\SQ(\Cat)$. 
Precomposition with the equivalences $\TWCOREF \rightarrow \TWCOREF_{\mathrm{d}}$ and $\LENS \rightarrow \LENS_{\mathrm{d}}$ induces, as outlined in Section~\ref{subsec:lifting-operations}, a lifting operation as follows.

\begin{theorem}
\label{theorem:lifting-operation}
The chosen lifts of a twisted coreflection against a delta lens, as constructed in Proposition~\ref{proposition:lifting-square}, define a $(\TWCOREF, \LENS)$-lifting operation on the cospan \eqref{equation:cospan-TWCOREF-LENS}.
\end{theorem}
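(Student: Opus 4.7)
The plan is to verify the two compatibility axioms of Section~\ref{subsec:lifting-operations} for the lift constructed in Proposition~\ref{proposition:lifting-square}; the existence of the diagonal filler itself is already in hand. I would carry out the verification in the pseudo double categories $\TWCOREF_{\mathrm{d}}$ and $\LENS_{\mathrm{d}}$, where the lift is most transparent, and then transport the resulting operation to the cospan \eqref{equation:cospan-TWCOREF-LENS} along the equivalences of Proposition~\ref{proposition:TWCOREF-equivalence} and Proposition~\ref{proposition:LENS-equivalence}, using the functoriality of lifting operations recalled in Section~\ref{subsec:lifting-operations}.

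Recall that the lift $j \colon B \rightarrow C$ was built from three universal properties in sequence: first a factorisation $\hat{h}$ through the bijective-on-objects $\psi$ out of the discrete category $A_{0}$ via Lemma~\ref{lemma:boo-functor}; then a diagonal $\ell$ via the comprehensive factorisation of Lemma~\ref{lemma:comprehensive-factorisation}; finally $j$ via the pushout defining the twisted coreflection. The overarching strategy for each compatibility axiom is therefore the same: construct the two sides of the required equation, observe that both sides satisfy the defining equations of one of these three universal morphisms, and invoke uniqueness to conclude. For horizontal compatibility on the $\TWCOREF$-side, a cell in $\TWCOREF_{\mathrm{d}}$ is a morphism of pushout squares, and precomposing the lifting construction for $(f_{2}, g)$ by the cell yields a candidate diagonal for the outer boundary square that satisfies the same three universal properties as the lift for $(f_{1}, g)$; dually on the $\LENS$-side, a cell is a morphism of triangles with a bijective-on-objects leg and a discrete opfibration hypotenuse, and uniqueness of $\hat{h}$ and $\ell$ enforces naturality.

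For vertical compatibility in $\TWCOREF_{\mathrm{d}}$, composition of diagrammatic twisted coreflections is defined via pasted pushouts in \eqref{equation:diagrammatic-twisted-coreflection}, so the iterated lift through two twisted coreflections agrees, by pushout pasting applied to the third stage of the construction, with the lift through the composite pushout; the first two stages (via $\hat{h}$ and $\ell$) are unaffected. For vertical compatibility in $\LENS_{\mathrm{d}}$, composition of diagrammatic delta lenses is defined by pulling back the bijective-on-objects along the discrete opfibration, and the analogous argument uses pullback pasting together with the fact that both stages producing $\hat{h}$ and $\ell$ factor uniquely through the pulled-back data.

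The main obstacle I expect is vertical compatibility on the $\LENS$-side: the composite delta lens involves a pullback $Z$ of $g \psi_{1}$ along $\psi_{2}$ from \eqref{equation:diagrammatic-lens}, and one must chase the defining equations of $\hat{h}$ and $\ell$ through this pullback to see that they agree with the corresponding constructions for each delta lens in turn. This is a careful but essentially routine diagram chase, made tractable by the fact that every intermediate morphism is determined by a uniqueness statement rather than computed from generators and relations, which is precisely the advantage of working with $\TWCOREF_{\mathrm{d}}$ and $\LENS_{\mathrm{d}}$ rather than with the axiomatic presentations in Definition~\ref{definition:twisted-coreflection} and Definition~\ref{definition:delta-lens}.
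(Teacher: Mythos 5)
Your proposal is correct and follows essentially the same route as the paper: the paper likewise verifies the compatibilities in the diagrammatic pseudo double categories $\TWCOREF_{\mathrm{d}}$ and $\LENS_{\mathrm{d}}$ by exploiting the uniqueness clauses of the three universal properties used to build the lift, and then transports the operation along the equivalences of Proposition~\ref{proposition:TWCOREF-equivalence} and Proposition~\ref{proposition:LENS-equivalence}. The paper leaves the diagram chases at the level of a sketch, so your additional remarks on pushout and pullback pasting for the vertical compatibilities are a reasonable elaboration rather than a divergence.
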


\subsection{The axiom of factorisation}
\label{subsec:factorisation}

In this subsection, we construct the factorisation of a functor as a cofree twisted coreflection followed by a free delta lens. 

Recall that split coreflections and split opfibrations form an algebraic weak factorisation system $(\COREF, \SOPF)$ on $\Cat$. 
Each functor $f \colon A \rightarrow B$ factorises as a split coreflection followed by a split opfibration via a certain comma category as shown below. 
\begin{equation*}
    \begin{tikzcd}[column sep = large]
        A 
        \arrow[r, hook, shift right = 3]
        \arrow[r, phantom, "\top"]
        &
        f / B 
        \arrow[l, shift right = 3]
        \arrow[r]
        & 
        B 
    \end{tikzcd}
\end{equation*}
As recalled in Lemma~\ref{lemma:comprehensive-factorisation}, the comprehensive factorisation system on $\Cat$ factorises each functor as an initial functor followed by a discrete opfibration.
Note that every split coreflection has an underlying initial functor, and every discrete opfibration admits the structure of a split opfibration uniquely. 
If the domain of the functor is a discrete category, then these two factorisations coincide in the following sense. 

\begin{lemma}
\label{lemma:factorisation-discrete-domain}
Each functor $f \colon A_{0} \rightarrow B$ from a discrete category factorises as a split coreflection followed by a discrete opfibration via a coproduct of coslice categories as below. 
\begin{equation*}
    \begin{tikzcd}[column sep = large]
        A_{0} 
        \arrow[r, hook, shift right = 3, "If"']
        \arrow[r, phantom, "\top"]
        &
        \displaystyle\sum_{a \in A_{0}} fa / B
        \arrow[l, shift right = 3, "Sf"']
        \arrow[r, "Tf"]
        & 
        B 
    \end{tikzcd}
\end{equation*}
\end{lemma}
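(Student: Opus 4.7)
The plan is to exhibit the factorisation explicitly and then verify the two required structural properties. For each $a \in A_{0}$, the coslice category $fa/B$ has objects the morphisms $u \colon fa \to b$ in $B$ and morphisms the commuting triangles under $fa$; thus the coproduct $\sum_{a \in A_{0}} fa/B$ has objects the pairs $(a, u \colon fa \to b)$, with every morphism internal to a single summand. I would define $If$ by $a \mapsto (a, 1_{fa})$, $Sf$ by $(a, u) \mapsto a$, and $Tf$ by sending $(a, u \colon fa \to b)$ to $b$ and acting on each morphism of a coslice via its underlying arrow in $B$. The equality $Tf \circ If = f$ then holds on objects by construction, and on morphisms because $A_{0}$ is discrete.

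To establish the split coreflection $(If \dashv Sf, \varepsilon)$, note that $Sf \circ If = 1_{A_{0}}$ is immediate, so the unit may be taken to be an identity; the counit component at $(a, u \colon fa \to b)$ is then the unique morphism $1_{fa} \to u$ in $fa/B$, namely $u$ itself, and the triangle identities hold trivially. Alternatively, by Lemma~\ref{lemma:intial-vs-coreflection}, it suffices to observe that $If$ is an initial functor from a discrete category, which holds because $1_{fa}$ is the initial object of the connected component $fa/B$ and is precisely what $If$ selects. To see that $Tf$ is a discrete opfibration, I would produce the unique lift: given $(a, u \colon fa \to b)$ and $w \colon b \to b'$ in $B$, the lift is the morphism $w \colon (a, u) \to (a, w \circ u)$ internal to the $a$-th summand; uniqueness follows because morphisms in the coproduct never cross components, while morphisms in $fa/B$ are determined by their image in $B$.

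No real obstacle is expected here, since the argument rests entirely on standard universal properties of coslice categories together with the fact that coproducts in $\Cat$ decompose hom-sets by component. The only care required is bookkeeping the component-indexing of the coproduct, so as not to conflate the split coreflection data over $A_{0}$ with the discrete opfibration data over $B$.
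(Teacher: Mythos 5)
Your proposal is correct and uses exactly the construction the paper has in mind: the paper states this lemma without proof, only recording the formulas $If(a) = (a, 1_{fa})$, $Sf(a,u) = a$, $Tf(a,u) = b$ immediately afterwards, and your verification of the counit, the triangle identities, and the unique lifts for $Tf$ fills in precisely the routine details left implicit. Nothing further is needed.
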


The notation chosen for the functors $If$, $Sf$, and $Tf$ is to remind us that they behave like identity, source, and target maps of an internal category, since $If(a) = (a, 1_{fa})$, $Sf(a, u \colon fa \rightarrow b) = a$ and $Tf(a, u \colon fa \rightarrow b) = b$. 

\begin{proposition}
\label{proposition:factorisation}
Each functor $f \colon A \rightarrow B$ admits a factorisation as a twisted coreflection followed by a delta lens. 
\end{proposition}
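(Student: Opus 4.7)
The plan is to build the factorisation by combining Lemma~\ref{lemma:factorisation-discrete-domain}, which handles the discrete case, with the pushout characterisation of twisted coreflections from Proposition~\ref{proposition:twisted-coreflection} and the diagrammatic characterisation of delta lenses from Lemma~\ref{lemma:diagrammatic-lens}. The idea is that the ``twisted coreflection part'' of $f$ should come entirely from the restriction $f\iota_{A} \colon A_{0} \rightarrow B$ along the discrete category counit, since twisted coreflections are cofibrantly pasted in from discrete categories (Example~\ref{example:pasting-initial-functors}).

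First I would apply Lemma~\ref{lemma:factorisation-discrete-domain} to the functor $f\iota_{A} \colon A_{0} \rightarrow B$ to obtain a factorisation
\begin{equation*}
    \begin{tikzcd}[column sep = large]
        A_{0}
        \arrow[r, "I(f\iota_{A})"]
        &
        X
        \arrow[r, "T(f\iota_{A})"]
        &
        B
    \end{tikzcd}
\end{equation*}
where $X = \sum_{a \in A_{0}} fa / B$, the first functor is a split coreflection from a discrete category (hence initial by Lemma~\ref{lemma:intial-vs-coreflection}), and the second is a discrete opfibration. I would then form the pushout of $I(f\iota_{A})$ along $\iota_{A}$, obtaining a pushout square whose right-hand vertical leg is a twisted coreflection $(f \dashv q, \varepsilon) \colon A \nrightarrow E$ by Proposition~\ref{proposition:twisted-coreflection}.

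Next, since $T(f\iota_{A}) \circ I(f\iota_{A}) = f\iota_{A} = f \circ \iota_{A}$, the universal property of the pushout yields a unique functor $g \colon E \rightarrow B$ making both triangles commute; in particular, $g$ composed with the pushout leg $A \rightarrow E$ recovers $f$, so the factorisation property holds. It remains to equip $g$ with a delta lens structure. For this I would invoke Lemma~\ref{lemma:diagrammatic-lens} with the bottom edge $\pi \colon X \rightarrow E$ of the pushout as the bijective-on-objects functor: $\pi$ is bijective-on-objects because $\iota_{A}$ is identity-on-objects and these functors are stable under pushout (Remark~\ref{remark:pushouts-are-pullbacks}), while $g\pi = T(f\iota_{A})$ is a discrete opfibration by construction.

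The argument is essentially forced by the two diagrammatic characterisations, so no serious obstacle arises; the only point requiring care is verifying commutativity of the relevant triangles with the pushout cone so that $g$ is well-defined and indeed factorises $f$. The main conceptual content is the observation that the ``non-trivial'' lifting behaviour of $f$ occurs only at objects of $A$ (captured by the comprehensive factorisation of $f\iota_{A}$), which is precisely the information that a twisted coreflection encodes.
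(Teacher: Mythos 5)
Your proposal is correct and follows essentially the same route as the paper: apply Lemma~\ref{lemma:factorisation-discrete-domain} to $f\iota_{A}$, push out the resulting initial functor along $\iota_{A}$ to obtain the twisted coreflection via Proposition~\ref{proposition:twisted-coreflection}, and equip the induced comparison functor with a delta lens structure via Lemma~\ref{lemma:diagrammatic-lens} using the bijective-on-objects pushout leg and the discrete opfibration $T(f\iota_{A})$. This matches the paper's construction of $Lf$ and $Rf$ in diagram \eqref{equation:factorisation} exactly.
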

\begin{proof}
Given a functor $f \colon A \rightarrow B$ we may construct the following commutative diagram, as in \cite[Section~4.1]{Clarke2023}, where $f' = f \circ \iota_{A}$. 
\begin{equation}
\label{equation:factorisation}
    \begin{tikzcd}[column sep = large]
        A_{0} 
        \arrow[r, "\iota_{A}"]
        \arrow[d, "If'"']
        \arrow[rd, phantom, "\ulcorner" very near end]
        & 
        A
        \arrow[d, "Lf"']
        \arrow[rounded corners, to path={-- ([xshift=2ex]\tikztostart.east) -- node[right]{\scriptsize$f$}  ([xshift=2ex]\tikztotarget.east) -- (\tikztotarget)}]{dd}
        \\
        \sum_{a \in A_{0}} fa / B 
        \arrow[r, "\Phi f"]
        \arrow[d, "Tf'"']
        & 
        Ef 
        \arrow[d, dashed, "Rf"']
        \\
        B 
        \arrow[r, equal]
        & 
        B
    \end{tikzcd}
\end{equation}
The functor $f$ is precomposed with the counit component $\iota_{A}$ of the discrete category comonad to obtain a functor to which we apply the factorisation of Lemma~\ref{lemma:factorisation-discrete-domain}. 
We then take the pushout of the resulting initial functor $If$ along the functor $\iota_{A}$ to obtain the functor $Lf$ which has the structure of a twisted coreflection structure by Proposition~\ref{proposition:twisted-coreflection}. 
Using the universal property of the pushout, we obtain the functor $Rf$ which has the structure of a delta lens by Lemma~\ref{lemma:diagrammatic-lens}, since precomposing with the bijective-on-objects functor~$\Phi f$ yields the discrete opfibration $Tf'$.
\end{proof}

\begin{notation}
Given the factorisation \eqref{equation:factorisation} of a functor $f \colon A \rightarrow B$ constructed in Proposition~\ref{proposition:factorisation}, we let $(Lf \dashv Sf, \varepsilon) \colon A \nrightarrow Ef$ denote the twisted coreflection structure on $Lf$ and $(Rf, \Phi f) \colon Ef \nrightarrow B$ denote the delta lens structure on $Rf$. 
We always fix a factorisation of $f$ by choosing a particular pushout, denoted $Ef$. 
\end{notation}

The cospan of double functors \eqref{equation:cospan-TWCOREF-LENS} has an underlying cospan of functors between the corresponding categories $\TwCoref$ and $\Lens$ of vertical morphisms and cells.
\begin{equation*}
    \begin{tikzcd}
        \TwCoref 
        \arrow[r, "U_{1}"]
        &
        \Cat^{\two}
        &
        \Lens
        \arrow[l, "V_{1}"']
    \end{tikzcd}
\end{equation*}
The \emph{axiom of factorisation} of an algebraic weak factorisation system (see Section~\ref{subsec:awfs}) requires that $(Lf, 1_{B}) \colon f \rightarrow V_{1}(Rf, \Phi f)$ is a universal arrow from $f$ to $V_{1}$ and that $(1_{C}, Rg) \colon U_{1}(Lg \dashv Sg, \varepsilon) \rightarrow g$ is a universal arrow from $U_{1}$ to $g$, where $f$ and $g$ are used to denote arbitrary objects in $\Cat^{\two}$.
\begin{equation*}
    \begin{tikzcd}
        A 
        \arrow[r, "Lf"]
        \arrow[d, "f"']
        & 
        Ef 
        \arrow[d, "{V_{1}(Rf, \, \Phi f) \, = \, Rf}"]
        \\
        B 
        \arrow[r, equal]
        & 
        B
    \end{tikzcd}
    \qquad \qquad
    \begin{tikzcd}
        C 
        \arrow[r, equal]
        \arrow[d, "{U_{1}(Lg\, \dashv\, Sg,\, \varepsilon) \, = \, Lg}"']
        & 
        C 
        \arrow[d, "g"]
        \\
        Eg 
        \arrow[r, "Rg"']
        & 
        D 
    \end{tikzcd}
\end{equation*}
More concisely, we must show that each functor factorises as a \emph{cofree} twisted coreflection (with respect to $U_{1}$) followed by a \emph{free} delta lens (with respect to $V_{1}$). 
We now establish that the factorisation constructed in Proposition~\ref{proposition:factorisation} indeed satisfies these conditions.

\begin{proposition}
\label{proposition:free-delta-lens}
Given a functor $f \colon A \rightarrow B$, the morphism $(Lf, 1_{B}) \colon f \rightarrow V_{1}(Rf, \Phi f)$ in $\Cat^{\two}$, constructed in Proposition~\ref{proposition:factorisation}, is a universal arrow from $f$ to $V_{1}$. 
\end{proposition}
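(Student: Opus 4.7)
The plan is to exploit the fact that $Ef$ is constructed as the pushout
\begin{equation*}
    \begin{tikzcd}
        A_{0} \arrow[r, "\iota_{A}"] \arrow[d, "If'"'] \arrow[rd, phantom, "\ulcorner" very near end] & A \arrow[d, "Lf"] \\
        \displaystyle\sum_{a \in A_{0}} fa/B \arrow[r, "\Phi f"'] & Ef
    \end{tikzcd}
\end{equation*}
in Proposition~\ref{proposition:factorisation}. Thus, a functor $h' \colon Ef \to C$ together with the identity $h' \circ Lf = h$ is equivalent to a functor $\tilde{h} \colon \sum_{a \in A_{0}} fa/B \to C$ satisfying $\tilde{h} \circ If' = h \circ \iota_{A}$. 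The task becomes: construct such a $\tilde{h}$ uniquely from the data of $(h, k) \colon f \to g$ and the lifts $\psi$ of the delta lens so that the resulting $h'$ makes $(h', k) \colon (Rf, \Phi f) \to (g, \psi)$ a cell in $\LENS$.

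For existence, I will define $\tilde{h}$ using the chosen lifts of $(g, \psi)$: on objects, set $\tilde{h}(a, u \colon fa \to b) = \cod \psi(ha, ku)$, and on a morphism $v \colon (a, u) \to (a, v \circ u)$ in the coslice, set $\tilde{h}(v) = \psi(\tilde{h}(a, u), kv)$. Axiom~\ref{DL2} gives both the identity clause and the agreement with $h \iota_{A}$ (since $\tilde{h}(a, 1_{fa}) = \cod \psi(ha, 1) = ha$); axiom~\ref{DL3} gives functoriality and the equality $\cod \tilde{h}(v) = \tilde{h}(a, v \circ u)$. The square $g h' = k \circ Rf$ holds because $g \tilde{h}(a, u) = \cod(ku) = kb = k \circ Tf'(a, u)$, combined with $gh = kf$, by the universal property of the pushout. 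Finally, using Lemma~\ref{lemma:diagrammatic-lens} to describe the lifts of $(Rf, \Phi f)$ as images under $\Phi f$ of the canonical coslice morphisms $u \colon (a, w) \to (a, uw)$, the cell condition in $\LENS$ becomes exactly the defining equation $\tilde{h}(u) = \psi(\tilde{h}(a, w), ku)$ of $\tilde{h}$ on morphisms, which holds by construction.

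For uniqueness, suppose $h'' \colon Ef \to C$ also satisfies $h'' Lf = h$ and yields a cell $(h'', k) \colon (Rf, \Phi f) \to (g, \psi)$ in $\LENS$. Put $\tilde{h}'' = h'' \circ \Phi f$. The pushout identity $\Phi f \circ If' = Lf \circ \iota_{A}$ together with $h'' Lf = h$ forces $\tilde{h}''(a, 1_{fa}) = ha$; the cell condition, unpacked via Lemma~\ref{lemma:diagrammatic-lens} as above, forces the recursion $\tilde{h}''(u) = \psi(\tilde{h}''(a, w), ku)$. Thus $\tilde{h}''$ satisfies the same defining equations as $\tilde{h}$ on objects (by taking $(a, w) = (a, 1_{fa})$) and then on morphisms, so $\tilde{h}'' = \tilde{h}$, and by the universal property of the pushout $h'' = h'$.

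The principal obstacle is the cell condition: one must identify the correct formula for $\tilde{h}$ (namely, ``follow the chosen $\psi$-lift starting from $ha$''), after which all remaining verifications are routine applications of \ref{DL2} and \ref{DL3}. The rest of the proof is essentially bookkeeping in the pushout square of Proposition~\ref{proposition:factorisation}.
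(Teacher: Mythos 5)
Your proof is correct, but it takes a genuinely different route from the paper's. The paper works entirely with universal properties: it passes to the diagrammatic presentation of the target delta lens $(g,\psi)$ via its tabulator $Y$, produces $\hat{h}\colon A_{0}\to Y$ from the bijective-on-objects property of $\psi$ (Lemma~\ref{lemma:boo-functor}), produces $\ell\colon \sum_{a}fa/B\to Y$ from the orthogonality of the initial functor $If'$ against the discrete opfibration $g\psi$ (Lemma~\ref{lemma:comprehensive-factorisation}), and only then invokes the pushout --- so existence, uniqueness, and the cell condition all come for free from these three universal properties. You instead construct the map $\tilde{h}=\psi\circ\ell$ into $C$ directly by the explicit formula ``follow the chosen $\psi$-lift starting at $ha$'' and verify functoriality, the square condition, and the cell condition by hand from \ref{DL1}--\ref{DL3}; your uniqueness argument likewise proceeds by showing any competing $h''$ satisfies the same recursion, using that every morphism of $\sum_{a}fa/B$ is the unique $Tf'$-lift of its image. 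Your approach is more elementary (it needs neither the comprehensive factorisation system nor the diagrammatic presentation of $(g,\psi)$) and makes the formula for the free-lens comparison functor completely explicit, at the cost of more verification; the paper's approach is shorter given the machinery of Sections~\ref{subsec:diagrammatic-delta-lenses} and \ref{subsec:lifting-operation-twcoref-lens} and exhibits the parallel with Proposition~\ref{proposition:lifting-square}. All the steps you sketch do go through: in particular the codomain bookkeeping $\cod\tilde{h}(v)=\tilde{h}(a,v\circ u)$ and the identification of the chosen lifts of $(Rf,\Phi f)$ with the coslice morphisms $d\colon(a,u)\to(a,du)$ are exactly right.
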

\begin{proof}
Given a delta lens $(g, \psi) \colon C \nrightarrow D$ and morphism $(h, k) \colon f \rightarrow g$ in $\Cat^{\two}$, we must show that there exists a unique morphism $(Rf, \Phi f) \rightarrow (g, \psi)$ in $\Lens$. 
Since the equivalence $\Lens \rightarrow \Lens_{\mathrm{d}}$ is fully faithful, it is enough to show that there is a unique morphism between the corresponding diagrammatic delta lenses, as depicted on the left below, such that $(j, k) \circ (Lf, 1_{B}) = (h, k)$, as depicted on the right below.
\begin{equation*}
    \begin{tikzcd}
        \sum_{a \in A_{0}} fa / B
        \arrow[r, "\exists! \, \ell", dashed]
        \arrow[d, "\Phi f"']
        &
        Y 
        \arrow[d, "\psi"]
        \\
        Ef 
        \arrow[r, dashed, "\exists! \, j"]
        \arrow[d, "Rf"']
        &
        C 
        \arrow[d, "g"]
        \\
        B 
        \arrow[r, "k"']
        & 
        D
    \end{tikzcd}
    \qquad \qquad \qquad 
    \begin{tikzcd}
        A 
        \arrow[r, "Lf"]
        \arrow[d, "f"']
        \arrow[rounded corners, to path={-- ([yshift=2ex]\tikztostart.north) -- node[above]{\scriptsize$h$}  ([yshift=2ex]\tikztotarget.north) -- (\tikztotarget)}]{rr}
        & 
        Ef 
        \arrow[d, "Rf"]
        \arrow[r, dashed, "j"]
        & 
        C 
        \arrow[d, "g"]
        \\
        B 
        \arrow[r, equal]
        \arrow[rounded corners, to path={-- ([yshift=-2ex]\tikztostart.south) -- node[below]{\scriptsize$k$}  ([yshift=-2ex]\tikztotarget.south) -- (\tikztotarget)}]{rr}
        & 
        B
        \arrow[r, "k"']
        & 
        D
    \end{tikzcd}
\end{equation*}

We proceed by constructing $(\ell, j, k) \colon (Rf, \Phi f) \rightarrow (g, \psi)$. 
Since $\psi \colon Y \rightarrow C$ is a bijective-on-objects functor and $A_{0}$ is a discrete category, by Lemma~\ref{lemma:boo-functor}, there exists a unique functor $\hat{h} \colon A_{0} \rightarrow Y$ such that the following diagram commutes. 
\begin{equation*}
    \begin{tikzcd}
        & 
        & 
        Y 
        \arrow[d, "\psi"]
        \\
        A_{0}
        \arrow[rru, dashed, "\hat{h}", bend left = 15]
        \arrow[r, "\iota_{A}"']
        &
        A 
        \arrow[r, "h"']
        & 
        C 
    \end{tikzcd}
\end{equation*}

Since the functor $If'$, constructed in \eqref{equation:factorisation}, is an initial functor, and $g\psi$ is a discrete opfibration, by Lemma~\ref{lemma:comprehensive-factorisation}, there exists a unique functor $\ell$ such that the following diagram commutes. 
\begin{equation*}
    \begin{tikzcd}[column sep = large]
        A_{0}
        \arrow[r, "\hat{h}"]
        \arrow[d, "If'"']
        & 
        Y 
        \arrow[d, "g\psi"]
        \\
        \sum_{a \in A_{0}} fa / B
        \arrow[r, "k \, \circ \, Tf'"']
        \arrow[ru, dashed, "\ell"']
        & 
        D 
    \end{tikzcd}
\end{equation*}

Finally, using the universal property of the pushout $Ef$, constructed in \eqref{equation:factorisation}, there exists a unique functor $j$ such that the following diagram commutes. 
\begin{equation*}
    \begin{tikzcd}
        A_{0}
        \arrow[d, "If'"']
        \arrow[r, "\iota_{A}"]
        \arrow[rd, phantom, "\ulcorner" very near end]
        &
        A 
        \arrow[d, "Lf"]
        \arrow[rdd, bend left = 30, "h"]
        &
        \\
        \sum_{a \in A_{0}} fa / B 
        \arrow[r, "\Phi f"']
        \arrow[rd, "\ell"']
        & 
        B 
        \arrow[rd, dashed, "j"]
        &
        \\
        &
        Y 
        \arrow[r, "\psi"']
        & 
        C
    \end{tikzcd}
\end{equation*}

We have that $j \circ Lf = h$ and $j \circ \Phi f = \psi \circ \ell$ by construction, and using the universal property of the pushout one may also easily show that $g \circ j = k \circ Rf$ as required. 
\end{proof}

One may observe that the proof of Proposition~\ref{proposition:free-delta-lens} is very similar to the construction of the lifting operation in Proposition~\ref{proposition:lifting-square}.
The reason is that to prove Proposition~\ref{proposition:free-delta-lens}, we are essentially constructing the lift of the twisted coreflection $(Lf \dashv Sf, \varepsilon)$ against the delta lens $(g, \psi)$, and then showing this induces a morphism in $\Lens$. 

\begin{proposition}
\label{proposition:cofree-twisted-coreflection}
The morphism $(1_{C}, Rg) \colon U_{1}(Lg \dashv Sg, \varepsilon) \rightarrow g$ in $\Cat^{\two}$, constructed in Proposition~\ref{proposition:factorisation}, is a universal arrow from $U_{1}$ to $g \colon C \rightarrow D$. 
\end{proposition}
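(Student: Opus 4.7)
The plan is to mirror the proof of Proposition~\ref{proposition:free-delta-lens}. Given a twisted coreflection $(f \dashv q, \varepsilon') \colon A \pto B$ and a morphism $(h, k) \colon f \rightarrow g$ in $\Cat^{\two}$, I must produce a unique cell $(h, \tilde{k}) \colon (f \dashv q, \varepsilon') \rightarrow (Lg \dashv Sg, \varepsilon)$ in $\TwCoref$ such that $Rg \circ \tilde{k} = k$.

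First, I construct $\tilde{k}$ by carrying out the three-step procedure of Proposition~\ref{proposition:lifting-square}, applied to the commutative square with top $Lg \circ h$, bottom $k$, left $f$, and right $Rg$ (which commutes because $Rg \circ Lg \circ h = g \circ h = k \circ f$). Presenting $(f \dashv q, \varepsilon')$ diagrammatically via Proposition~\ref{proposition:TWCOREF-equivalence} as a pushout of an initial functor $f' \colon A_{0} \rightarrow X_{f}$ along $\iota_{A}$, and writing $h_{0} \colon A_{0} \rightarrow C_{0}$ for the object-component of $h$, the steps are: (i) since $\Phi g$ is bijective-on-objects, Lemma~\ref{lemma:boo-functor} produces the unique $\hat{h} = Ig' \circ h_{0}$ with $\Phi g \circ \hat{h} = Lg \circ h \circ \iota_{A}$; (ii) since $f'$ is initial and $Tg'$ is a discrete opfibration, Lemma~\ref{lemma:comprehensive-factorisation} produces the unique $\ell \colon X_{f} \rightarrow \sum_{c \in C_{0}} gc/D$ with $\ell \circ f' = \hat{h}$ and $Tg' \circ \ell = k \circ \pi_{f}$; (iii) the pushout universal property produces the unique $\tilde{k} \colon B \rightarrow Eg$ with $\tilde{k} \circ f = Lg \circ h$ and $\tilde{k} \circ \pi_{f} = \Phi g \circ \ell$. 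A second appeal to the pushout universal property then yields $Rg \circ \tilde{k} = k$.

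Next, I verify that $(h, \tilde{k})$ is a cell in $\TwCoref$. Rather than directly unpack the axioms for preservation of $Sg$ and $\varepsilon$, I apply the equivalence $\TWCOREF \simeq \TWCOREF_{\mathrm{d}}$ from Proposition~\ref{proposition:TWCOREF-equivalence}: the functor $\ell$ together with $h_{0}$ supplies precisely the diagrammatic cell data, by the two equations obtained in step~(iii).

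For uniqueness, I assume $(h, \tilde{k}')$ is any other cell in $\TwCoref$ with $Rg \circ \tilde{k}' = k$. Via Proposition~\ref{proposition:TWCOREF-equivalence}, this determines some $\ell' \colon X_{f} \rightarrow \sum_{c \in C_{0}} gc/D$ with $\ell' \circ f' = Ig' \circ h_{0}$ and $\Phi g \circ \ell' = \tilde{k}' \circ \pi_{f}$; postcomposing the latter equation with $Rg$ yields $Tg' \circ \ell' = k \circ \pi_{f}$. Uniqueness of the diagonal in the comprehensive factorisation forces $\ell' = \ell$, and the universal property of the pushout then forces $\tilde{k}' = \tilde{k}$. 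I expect the main obstacle to be the bookkeeping involved in translating between the axiomatic cell conditions of $\TwCoref$---compatibility with the right adjoint and counit---and the single diagrammatic datum $\ell$; this translation is precisely the content of Proposition~\ref{proposition:TWCOREF-equivalence}, which I lean on rather than expand by hand.
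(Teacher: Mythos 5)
Your proposal is correct and follows essentially the same route as the paper: reduce to a unique cell of diagrammatic twisted coreflections via the equivalence $\TWCOREF \simeq \TWCOREF_{\mathrm{d}}$, obtain $\ell$ by orthogonality of the initial functor $f'$ against the discrete opfibration $Tg'$, and obtain $\tilde{k}$ (the paper's $j$) from the pushout universal property, with $Rg \circ \tilde{k} = k$ checked by a further appeal to that pushout. Your uniqueness paragraph is in fact slightly more explicit than the paper's, which leaves it implicit in the $\exists!$ of the two universal properties.
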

\begin{proof}
Given a twisted coreflection $(f \dashv q, \zeta) \colon A \nrightarrow B$ and a morphism $(h, k) \colon f \rightarrow g$ in $\Cat^{\two}$, we must show that there exists a unique morphism $(f \dashv q, \zeta) \rightarrow (Lg \dashv Sg, \varepsilon)$ in $\TwCoref$. 
Since the equivalence $\TwCoref \rightarrow \TwCoref_{\mathrm{d}}$ is fully faithful, it is enough to show that there is a unique morphism $(\ell, h, j) \colon (f, f', \pi) \rightarrow (Lg, Ig', \Phi g)$ between the corresponding diagrammatic delta lenses, as depicted below, such that we have $(1_{C}, Rg) \circ (h, j) = (h, k)$, or equivalently, $Rg \circ j = k$. 
\begin{equation*}
    \begin{tikzcd}
        A_{0}
        \arrow[r, "\iota_{A}"]
        \arrow[d, "f'"']
        \arrow[rd, phantom, "\ulcorner" very near end]
        & 
        A 
        \arrow[r, "h"]
        \arrow[d, "f"]
        & 
        C 
        \arrow[d, "Lg"]
        \\
        X 
        \arrow[r, "\pi"']
        & 
        B 
        \arrow[r, dashed, "\exists! \, j"']
        & 
        Eg
    \end{tikzcd}
    \quad = \quad
    \begin{tikzcd}
        A_{0}
        \arrow[r, "h_{0}"]
        \arrow[d, "f'"']
        & 
        C_{0}
        \arrow[r, "\iota_{C}"]
        \arrow[d, "Ig'"]
        \arrow[rd, phantom, "\ulcorner" very near end]
        & 
        C 
        \arrow[d, "Lg"]
        \\
        X 
        \arrow[r, dashed, "\exists! \, \ell"']
        & 
        \sum_{c \in C_{0}} gc / D
        \arrow[r, "\Phi g"']
        & 
        Eg
    \end{tikzcd}
\end{equation*}

We proceed by constructing the morphism $(\ell, h, j) \colon (f, f', \pi) \rightarrow (Lg, Ig', \Phi g)$. 
Since $f'$ is an initial functor and $Tg'$, constructed in \eqref{equation:factorisation}, is a discrete opfibration, by Lemma~\ref{lemma:comprehensive-factorisation}, there exists a unique functor $\ell$ such that the following diagram commutes.
\begin{equation*}
    \begin{tikzcd}[column sep = large]
        A_{0}
        \arrow[r, "Ig' \, \circ \, h_{0}"]
        \arrow[d, "f'"']
        &
        \sum_{c \in C_{0}} gc / D 
        \arrow[d, "Tg'"]
        \\
        X 
        \arrow[r, "k\pi"']
        \arrow[ru, dashed, "\ell"]
        &
        D
    \end{tikzcd}
\end{equation*}

Using the universal property of the pushout $B$, there exists a unique functor $j$ such that the following diagram commutes. 
\begin{equation*}
    \begin{tikzcd}
        A_{0}
        \arrow[r, "\iota{A}"]
        \arrow[d, "f'"']
        \arrow[rd, phantom, "\ulcorner" very near end]
        & 
        A 
        \arrow[rd, "h"]
        \arrow[d, "f"]
        &[-1ex]
        \\
        X 
        \arrow[r, "\pi"']
        \arrow[rd, "\ell"']
        & 
        B 
        \arrow[rd, dashed, "j"]
        & 
        C 
        \arrow[d, "Lg"]
        \\
        & 
        \sum_{c \in C_{0}} gc / D 
        \arrow[r, "\Phi g"']
        & 
        Eg 
    \end{tikzcd}
\end{equation*}

We have that $j \circ f = Lg \circ h$ and $j \circ \pi = \Phi g \circ \ell$ by construction, and using the universal property of the pushout one may also easily show that $Rg \circ j = k$ as required. 
\end{proof}

The factorisation constructed in Proposition~\ref{proposition:factorisation} together with Proposition~\ref{proposition:free-delta-lens} and Proposition~\ref{proposition:cofree-twisted-coreflection} tells us that the axiom of factorisation holds for the $(\TWCOREF, \LENS)$-lifting operation described in Section~\ref{subsec:lifting-operation-twcoref-lens}. 
We summarise this result as follows.  

\begin{theorem}
\label{theorem:factorisation}
Each functor admits a factorisation as a cofree twisted coreflection followed by a delta lens. 
\end{theorem}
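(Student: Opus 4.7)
The plan is to simply assemble the three preceding results into the statement of the axiom of factorisation. Proposition~\ref{proposition:factorisation} already constructs, for any $f \colon A \to B$, the factorisation $f = Rf \circ Lf$ through the pushout object $Ef$ of diagram~\eqref{equation:factorisation}, with $(Lf \dashv Sf, \varepsilon) \colon A \nrightarrow Ef$ a twisted coreflection (by Proposition~\ref{proposition:twisted-coreflection}, since $Lf$ is obtained by pushing the initial functor $If'$ along $\iota_A$) and $(Rf, \Phi f) \colon Ef \nrightarrow B$ a delta lens (by Lemma~\ref{lemma:diagrammatic-lens}, since precomposing $Rf$ with the bijective-on-objects $\Phi f$ yields the discrete opfibration $Tf'$).

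What remains is to upgrade the factorisation so that the left factor is \emph{cofree} with respect to $U_1 \colon \TwCoref \to \Cat^{\two}$ and the right factor is \emph{free} with respect to $V_1 \colon \Lens \to \Cat^{\two}$. First, I would invoke Proposition~\ref{proposition:cofree-twisted-coreflection}: for each $g \colon C \to D$, the cell $(1_C, Rg) \colon U_1(Lg \dashv Sg, \varepsilon) \to g$ in $\Cat^{\two}$ is a universal arrow from $U_1$ to $g$, which is exactly the statement that $(Lg \dashv Sg, \varepsilon)$ is the cofree twisted coreflection on $g$. Second, I would invoke Proposition~\ref{proposition:free-delta-lens}: the cell $(Lf, 1_B) \colon f \to V_1(Rf, \Phi f)$ is a universal arrow from $f$ to $V_1$, so $(Rf, \Phi f)$ is the free delta lens on $f$. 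Either of these universal properties alone implies the other, by the result of Bourke~\cite{Bourke2023} recalled in Section~\ref{subsec:awfs}, and thus the axiom of factorisation holds.

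At this point the theorem is essentially a repackaging, with no serious obstacle of its own: the real work was carried out in Propositions~\ref{proposition:factorisation}, \ref{proposition:free-delta-lens}, and~\ref{proposition:cofree-twisted-coreflection}, each of which relies on the same three universal properties used in Proposition~\ref{proposition:lifting-square}, namely the universal property of a bijective-on-objects functor out of a discrete category (Lemma~\ref{lemma:boo-functor}), the comprehensive factorisation orthogonality (Lemma~\ref{lemma:comprehensive-factorisation}), and the defining universal property of the pushout $Ef$. The proof of Theorem~\ref{theorem:factorisation} therefore reduces to citing these three propositions and noting that together they witness the axiom of factorisation for the cospan~\eqref{equation:cospan-TWCOREF-LENS}.
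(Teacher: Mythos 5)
Your proposal is correct and matches the paper exactly: the theorem is stated as a summary of Propositions~\ref{proposition:factorisation}, \ref{proposition:free-delta-lens}, and~\ref{proposition:cofree-twisted-coreflection}, which is precisely the assembly you describe. Your additional remark that either universal property implies the other via Bourke's result is accurate and consistent with the paper's discussion in Section~\ref{subsec:awfs}.
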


\subsection{The axiom of lifting}
\label{subsec:axiom-of-lifting}
In this subsection, we show that the double functors 
\[
    \TWCOREF \longrightarrow \LLP(\LENS)
    \qquad 
    \qquad
    \LENS \longrightarrow \RLP(\TWCOREF)
\]
induced by the lifting operation described in Section~\ref{subsec:lifting-operation-twcoref-lens} are invertible. 
We first provide an equivalent presentation of the vertical morphisms of $\LLP(\LENS)$ and $\RLP(\TWCOREF)$ in Lemma~\ref{lemma:coalgebras} and Lemma~\ref{lemma:algebras}, respectively. 
Following a description of the category $Ef$ in Construction~\ref{construction:Ef}, we then prove the required isomorphisms in Proposition~\ref{proposition:twcoref-LLP-lens} and Proposition~\ref{proposition:lens-RLP-twcoref}. 
For the definitions of $\LLP(-)$ and $\RLP(-)$, we refer to Section~\ref{subsec:double-categories-of-lifts}. 

A vertical morphism in $\LLP(\LENS)$ consists of a functor $f \colon A \rightarrow B$ together with a lifting operation $\lambda$ with respect to $U \colon \LENS \rightarrow \SQ(\Cat)$.
A typical component of this vertical morphism $(f, \lambda)$ at a delta lens $(g, \psi) \colon C \nrightarrow D$ may be depicted as on the left below.
However, by Proposition~\ref{proposition:free-delta-lens} and the horizontal compatibilities of a lifting operation, this is equal to a choice of lift against the free delta lens $Rf$ on $f$, as shown on the right below. 
\begin{equation*}
    \begin{tikzcd}[column sep = large]
        A 
        \arrow[r, "s"]
        \arrow[d, "f"']
        & 
        C 
        \arrow[d, "{V_{1}(g, \psi)}"]
        \\
        B 
        \arrow[r, "t"']
        \arrow[ru, dotted, "{\lambda_{g}(s,\, t)}" description]
        & 
        D
    \end{tikzcd}
    \qquad = \qquad 
    \begin{tikzcd}
        A 
        \arrow[d, "f"']
        \arrow[r, "Lf"]
        & 
        Ef 
        \arrow[d, "Rf"]
        \arrow[r, "\exists! j"]
        & 
        C 
        \arrow[d, "{V_{1}(g, \psi)}"]
        \\
        B 
        \arrow[r, equal]
        \arrow[ru, dotted, "\beta" description]
        & 
        B 
        \arrow[r, "t"']
        & 
        D
    \end{tikzcd}
\end{equation*}
Since this holds for every component of the lifting operation $(f, \lambda)$, we observe that the same data is captured by the pair $(f, \beta)$, where $\beta \circ f = Lf$ and $Rf \circ \beta = 1_{B}$. 
However, we have not yet taken into account the vertical compatibilities of the lifting operation. 

\begin{notation}
\label{notation:co-multiplication}
Given a commutative square $kf = gh$, we denote the lift of the cofree twisted coreflection $Lf$ against the free delta lens $Rg$, constructed using the lifting operation defined in Proposition~\ref{proposition:lifting-square}, by $E(h, k)$ as shown below. 
\begin{equation*}
    \begin{tikzcd}[column sep = large]
        A 
        \arrow[r, "Lg \, \circ \, h"]
        \arrow[d, "Lf"']
        & 
        Eg 
        \arrow[d, "Rg"]
        \\
        Ef 
        \arrow[r, "k \, \circ \, Rf"']
        \arrow[ru, dotted, "{E(h, \, k)}" description]
        & 
        D
    \end{tikzcd}
\end{equation*}
Furthermore, given a functor $f \colon A \rightarrow B$, we denote the lift of the cofree twisted coreflection $Lf$ against the free delta lens $RLf$ by $\Delta_{f}$, and the lift of the cofree twisted coreflection $LRf$ against the free delta lens $Rf$ by $\mu_{f}$, as shown below.
\begin{equation*}
    \begin{tikzcd}[column sep = large]
        A 
        \arrow[r, "LLf"]
        \arrow[d, "Lf"']
        & 
        ELf
        \arrow[d, "RLf"]
        \\
        Ef 
        \arrow[ru, dotted, "\Delta_{f}" description]
        \arrow[r, equal]
        & 
        Ef 
    \end{tikzcd}
    \qquad \qquad 
    \begin{tikzcd}[column sep = large]
        Ef 
        \arrow[r, equal]
        \arrow[d, "LRf"']
        & 
        Ef
        \arrow[d, "Rf"]
        \\
        ERf
        \arrow[ru, dotted, "\mu_{f}" description]
        \arrow[r, "RRf"']
        & 
        B
    \end{tikzcd}
\end{equation*}
\end{notation}

The vertical compatibilities for a vertical morphism $(f, \lambda)$ in $\LLP(\LENS)$ amount to the equality of the following diagonal fillers. 
\begin{equation*}
    \begin{tikzcd}[column sep = large]
        A 
        \arrow[r, "LLf"]
        \arrow[dd, "f"']
        & 
        ELf 
        \arrow[d, "RLf"]
        \\
        & 
        Ef 
        \arrow[d, "Rf"]
        \\
        B 
        \arrow[ru, dotted, "\beta" description]
        \arrow[ruu, dotted, "{\lambda(LLf, \, \beta)}"{description, yshift=1ex}] 
        \arrow[r, equal]
        &
        B
    \end{tikzcd}
    \qquad 
    = 
    \qquad 
    \begin{tikzcd}[column sep = large]
        A 
        \arrow[r, "LLf"]
        \arrow[dd, "f"']
        & 
        ELf 
        \arrow[d, "RLf"]
        \\
        & 
        Ef 
        \arrow[d, "Rf"]
        \\
        B 
        \arrow[ruu, dotted, "{\lambda(LLf, \, 1_{B})}"{description, yshift=1ex}] 
        \arrow[r, equal]
        &
        B
    \end{tikzcd}
\end{equation*}
Using the notation introduced in Notation~\ref{notation:co-multiplication}, we observe that $\lambda(LLf, \beta) = E(1_{A}, \beta) \circ \beta$ and $\lambda(LLf, 1_{B}) = \Delta_{f} \circ \beta$. 
Therefore, the vertical compatibilities required on a lifting operation $(f, \lambda)$ corresponding to the data $(f, \beta)$ amounts to $E(1_{A}, \beta) \circ \beta = \Delta_{f} \circ \beta$. 
We summarise this discussion of the vertical morphisms of $\LLP(\LENS)$ in the following lemma. 

\begin{lemma}
\label{lemma:coalgebras}
A vertical morphism in $\LLP(\LENS)$ is equivalent to a pair of functors $(f \colon A \rightarrow B, \beta \colon B \rightarrow Ef)$ such that the following diagrams commute.  
\begin{equation*}
    \begin{tikzcd}
        A 
        \arrow[r, "Lf"]
        \arrow[d, "f"']
        & 
        Ef 
        \arrow[d, "Rf"]
        \\
        B 
        \arrow[ru, "\beta"]
        \arrow[r, equal]
        & 
        B
    \end{tikzcd}
    \qquad \qquad 
    \begin{tikzcd}[column sep = large]
        B 
        \arrow[r, "\beta"]
        \arrow[d, "\beta"']
        & 
        Ef 
        \arrow[d, "\Delta_{f}"]
        \\
        Ef 
        \arrow[r, "{E(1_{A}, \, \beta)}"']
        & 
        ELf
    \end{tikzcd}
\end{equation*}
\end{lemma}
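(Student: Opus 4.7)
The plan is to establish a bijection between lifting operations $\lambda$ against $\LENS$ on $f$ and pairs $(f, \beta)$ satisfying the two stated diagrams, largely by formalising the discussion that already precedes the lemma. The key reduction is that horizontal naturality combined with Proposition~\ref{proposition:free-delta-lens} forces any such $\lambda$ to be determined by a single diagonal filler against the free delta lens $(Rf, \Phi f)$ on $f$.

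For the forward direction, given $(f, \lambda)$, I would set $\beta := \lambda_{Rf}(Lf, 1_{B}) \colon B \rightarrow Ef$. The two triangle identities defining a diagonal filler give $\beta \circ f = Lf$ and $Rf \circ \beta = 1_{B}$, which is the first diagram. For any other delta lens $(g, \psi)$ and square $(s, t) \colon f \rightarrow V_{1}(g, \psi)$, Proposition~\ref{proposition:free-delta-lens} produces a unique morphism of delta lenses $(Rf, \Phi f) \rightarrow (g, \psi)$ whose underlying functor $j \colon Ef \rightarrow C$ satisfies $j \circ Lf = s$ and $g \circ j = t$; horizontal naturality of $\lambda$ applied to this cell in $\LENS$ then forces $\lambda_{g}(s, t) = j \circ \beta$, so $\beta$ recovers $\lambda$ completely.

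To extract the second diagram, I would apply vertical compatibility of $\lambda$ to the composable pair of delta lenses $(RLf, \Phi Lf)$ and $(Rf, \Phi f)$ on the source square $(LLf, 1_{B}) \colon f \rightarrow V_{1}(Rf \circ RLf, \cdot)$, exactly as illustrated in the two square diagrams that precede the lemma statement. One composite computes to $\Delta_{f} \circ \beta$ by Notation~\ref{notation:co-multiplication}; the other computes to $E(1_{A}, \beta) \circ \beta$, where the intermediate lift is $\beta$ itself (since the bottom edge is $1_{B}$) and the upper lift is identified as $E(1_{A}, \beta)$ by the universal characterisation of the free delta lens.

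For the converse, given $(f, \beta)$ satisfying both diagrams, I would define $\lambda_{g}(s, t) := j \circ \beta$ with $j$ as supplied by Proposition~\ref{proposition:free-delta-lens}. The first diagram ensures the triangles commute, horizontal naturality follows from the uniqueness clause in Proposition~\ref{proposition:free-delta-lens}, and vertical compatibility in the left variable (composition in $\VV\two$) is automatic since the source is a free double category on a single vertical arrow. The main obstacle, and the only computation of substance, is checking that vertical compatibility in the right variable---i.e.~against arbitrary composable pairs of delta lenses---reduces to the second diagram. I expect to handle this by using Proposition~\ref{proposition:free-delta-lens} to replace the arbitrary composable pair with its comparison morphism from the universal composable pair $(RLf, \Phi Lf)$, $(Rf, \Phi f)$, at which point both sides of the compatibility equation are transported from the two sides of the second diagram by a common morphism, and agree.
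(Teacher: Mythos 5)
Your proposal is correct and follows essentially the same route as the paper, which establishes this lemma via the discussion preceding it: reduce $\lambda$ to $\beta = \lambda_{Rf}(Lf, 1_{B})$ using Proposition~\ref{proposition:free-delta-lens} and horizontal naturality, and extract the second diagram from vertical compatibility against the composable pair of free delta lenses $RLf$ and $Rf$. Your treatment of the converse (transporting the general vertical compatibility along the comparison morphisms supplied by Proposition~\ref{proposition:free-delta-lens}) is in fact spelled out more explicitly than in the paper, which leaves that direction implicit.
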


An analogous argument may be carried out for vertical morphisms in $\RLP(\TWCOREF)$ and may be summarised by the following lemma, using the notation in Notation~\ref{notation:co-multiplication}. 

\begin{lemma}
\label{lemma:algebras}
A vertical morphism in $\RLP(\TWCOREF)$ is equivalent to a pair of functors $(f \colon A \rightarrow B, \alpha \colon Ef \rightarrow B)$ such that the following diagrams commute. 
\begin{equation*}
    \begin{tikzcd}
        A 
        \arrow[r, equal]
        \arrow[d, "Lf"']
        & 
        A 
        \arrow[d, "f"]
        \\
        Ef 
        \arrow[ru, "\alpha"]
        \arrow[r, "Rf"']
        & 
        B 
    \end{tikzcd}
    \qquad \qquad
    \begin{tikzcd}[column sep = large]
        ERf 
        \arrow[r, "{E(\alpha, \, 1_{B})}"]
        \arrow[d, "\mu_{f}"']
        &
        Ef 
        \arrow[d, "\alpha"]
        \\
        Ef 
        \arrow[r, "\alpha"']
        & 
        B
    \end{tikzcd}
\end{equation*}
\end{lemma}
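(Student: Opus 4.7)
The plan is to dualise the argument sketched just before Lemma~\ref{lemma:coalgebras}, using Proposition~\ref{proposition:cofree-twisted-coreflection} in place of Proposition~\ref{proposition:free-delta-lens}. Recall that $(1_{A}, Rf) \colon U_{1}(Lf \dashv Sf, \varepsilon) \rightarrow f$ is a universal arrow from $U_{1}$ to $f$. Consequently, given a twisted coreflection $(g \dashv q, \zeta) \colon C \nrightarrow D$ and a commuting square $fs = tg$, there is a unique cell $(s, \tilde{t}) \colon (g \dashv q, \zeta) \rightarrow (Lf \dashv Sf, \varepsilon)$ in $\TwCoref$ with $Rf \circ \tilde{t} = t$. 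By the horizontal compatibilities of the lifting operation, the lift $\rho_{g}(s, t)$ must therefore coincide with $\alpha \circ \tilde{t}$, where $\alpha := \rho_{Lf}(1_{A}, Rf) \colon Ef \rightarrow A$ is the lift of the cofree twisted coreflection $Lf$ against $f$ itself.

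First I would show that a lifting operation $\rho$ on $f$ against twisted coreflections is completely determined by this single map $\alpha$, and that the unit triangles $\alpha \circ Lf = 1_{A}$ and $f \circ \alpha = Rf$ (i.e.\ the first square in the statement) are precisely the conditions for $\alpha$ to be such a lift. In the converse direction, given $(f, \alpha)$ satisfying the first square, I would \emph{define} $\rho_{g}(s, t) := \alpha \circ \tilde{t}$; the horizontal compatibilities then follow automatically from the uniqueness clause of Proposition~\ref{proposition:cofree-twisted-coreflection}.

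The remaining work is to identify the vertical compatibility of $\rho$ with the second square. By the same universality argument, vertical compatibility over \emph{all} composable pairs in $\TWCOREF$ reduces to the universal composable pair $Lf \colon A \nrightarrow Ef$ followed by $LRf \colon Ef \nrightarrow ERf$ (which is again a twisted coreflection by Proposition~\ref{proposition:twisted-coreflections-compose}). Computing the two sides of the compatibility condition against the square whose data is $1_{A}$ on top and $RRf$ on the bottom, the ``composite-of-lifts'' route produces $\alpha \circ \mu_{f}$, while the ``lift-of-the-composite'' route produces $\alpha \circ E(\alpha, 1_{B})$, using the definitions in Notation~\ref{notation:co-multiplication}. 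These must agree, yielding precisely the second square.

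The main obstacle is the last paragraph: threading the universal property of the cofree twisted coreflection together with the explicit constructions of $\mu_{f}$ and $E(\alpha, 1_{B})$ to show that a single associativity square encodes all instances of vertical compatibility. This is essentially a diagrammatic exercise, dual to the verification implicit in Lemma~\ref{lemma:coalgebras}, but requires some care because the construction of $E(h, k)$ in Proposition~\ref{proposition:lifting-square} invokes three distinct universal properties (the bijective-on-objects universal property, the comprehensive factorisation, and the pushout), each of which must be traced through when unpacking $E(\alpha, 1_{B})$.
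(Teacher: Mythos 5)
Your proposal is correct and matches the paper's (largely implicit) argument: the paper justifies this lemma only by declaring it ``analogous'' to the discussion preceding Lemma~\ref{lemma:coalgebras}, and your dualisation via Proposition~\ref{proposition:cofree-twisted-coreflection} --- determining the lifting operation by the single lift $\alpha = \rho_{Lf}(1_{A}, Rf) \colon Ef \rightarrow A$, with the first square expressing that $\alpha$ is a diagonal filler, and reducing vertical compatibility to the composable pair $(Lf, LRf)$ tested against the square $(1_{A}, RRf)$ --- is exactly that argument. Two cosmetic quibbles: you have swapped the labels ``composite-of-lifts'' and ``lift-of-the-composite'' (the lift of the composite is $\alpha \circ \mu_{f}$ and the composite of lifts is $\alpha \circ E(\alpha, 1_{B})$, though the resulting equation is unchanged), and your codomain $\alpha \colon Ef \rightarrow A$ is the correct reading of the statement, whose ``$Ef \rightarrow B$'' is a typo, as the displayed triangle and Proposition~\ref{proposition:lens-RLP-twcoref} confirm.
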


\begin{remark}
The original definition of an algebraic weak factorisation system \cite{GrandisTholen2006} on a category $\C$ involves a comonad $L$ and a monad $R$ on $\C^{\two}$ which are suitably compatible. 
Using the comonad we may construct a double category $L$-$\mathrm{\mathbb{C}oalg}$ whose vertical morphisms are the $L$-coalgebras, and dually, using the monad there is a double category $R$-$\mathrm{\mathbb{A}lg}$ whose vertical morphisms are the $R$-coalgebras \cite{BourkeGarner2016a}. 
The factorisation of a functor $f$ as a cofree twisted coreflection $Lf$ followed by a free delta lens $Rf$ induces such a comonad $L$ and monad $R$. 
Lemma~\ref{lemma:coalgebras} and Lemma~\ref{lemma:algebras} may be implicitly understood as defining isomorphisms of double categories $L$-$\mathrm{\mathbb{C}oalg} \cong \LLP(\LENS)$ and $R$-$\mathrm{\mathbb{A}lg} \cong \RLP(\TWCOREF)$. 
\end{remark}

Before establishing the axiom of lifting, it will be useful to explicitly describe the pushout~$Ef$, defined in \eqref{equation:factorisation} for a functor $f \colon A \rightarrow B$, following Construction~\ref{construction:pushout}. 

\begin{construction}
\label{construction:Ef}
Given a functor $f \colon A \rightarrow B$, we describe the category $Ef$ from \eqref{equation:factorisation}. 
The objects are pairs $(a \in A, u \colon fa \rightarrow b \in B)$, while the morphisms
\[
    (a_{1}, u_{1} \colon fa_{1} \rightarrow b_{1}) \rightarrow (a_{2}, u_{2} \colon fa_{2} \rightarrow b_{2})
\] 
are one of the following two sorts: 
\begin{enumerate}[(E1)]
    \item \label{E1} a morphism $v \colon b_{1} \rightarrow b_{2}$ in $B$ such that $v \circ u_{1} = u_{2}$; 
    \item \label{E2}morphisms $v \colon b_{1} \rightarrow fa_{1}$ in $B$ and $w \colon a_{1} \rightarrow a_{2}$ in $A$ such that $v \circ u_{1} = 1$ and $w \neq 1$. 
\end{enumerate}
The functor $Lf \colon A \rightarrow Ef$ sends a morphism $w \colon a \rightarrow a'$ in $A$ to the morphism of sort \ref{E2} given by $w \colon (a, 1_{fa}) \rightarrow (a', 1_{fa'})$.
The functor $Rf \colon Ef \rightarrow B$ sends a morphism $v \colon (a_{1}, u_{1}) \rightarrow (a_{2}, u_{2})$ of sort \ref{E1} to $v \colon b_{1} \rightarrow b_{2}$, and sends $(v, w) \colon (a_{1}, u_{1}) \rightarrow (a_{2}, u_{2})$ of sort \ref{E2} to $u_{2} \circ fw \circ v \colon b_{1} \rightarrow b_{2}$. 
\end{construction}

\begin{proposition}
\label{proposition:twcoref-LLP-lens}
The canonical double functor $\TWCOREF \rightarrow \LLP(\LENS)$ is invertible. 
\end{proposition}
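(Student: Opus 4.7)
The plan is to use the equivalent description of $\LLP(\LENS)$ provided by Lemma~\ref{lemma:coalgebras} and match it, piece by piece, with the defining data of a twisted coreflection in Definition~\ref{definition:twisted-coreflection}, with the explicit description of $Ef$ from Construction~\ref{construction:Ef} as the bridge. Since $\TWCOREF$ and $\LLP(\LENS)$ are both thin double categories concretely over $\SQ(\Cat)$, and the canonical double functor acts as the identity on objects and horizontal morphisms, it suffices to exhibit, for each functor $f \colon A \to B$, a bijection between twisted coreflection structures on $f$ and pairs $(f, \beta)$ as in Lemma~\ref{lemma:coalgebras}; compatibility with cells then follows automatically from thinness.

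The forward map sends a twisted coreflection $(f \dashv q, \varepsilon)$ to the pair $(f, \beta)$ where $\beta$ is the lift of $(f \dashv q, \varepsilon)$ against the free delta lens $(Rf, \Phi f)$ produced by Proposition~\ref{proposition:lifting-square} applied to the square $(Lf, 1_{B})$; the two equations of Lemma~\ref{lemma:coalgebras} then follow from the horizontal and vertical compatibilities of the lifting operation established in Theorem~\ref{theorem:lifting-operation}. For the inverse map, given $(f, \beta)$ I would unpack $\beta$ through Construction~\ref{construction:Ef}: on objects, $\beta(b) = (qb, \varepsilon_{b} \colon fqb \to b)$ defines $q$ and $\varepsilon$, with $Rf \beta = 1_{B}$ fixing the codomain of $\varepsilon_{b}$ to be $b$, and $\beta f = Lf$ forcing $qf = 1_{A}$ and $\varepsilon \cdot f = 1_{f}$. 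On a morphism $u \colon b_{1} \to b_{2}$, the image $\beta(u)$ must be of sort \ref{E1} or \ref{E2}: sort \ref{E1} records exactly the naturality equation $u \circ \varepsilon_{b_{1}} = \varepsilon_{b_{2}}$ occurring when $qu = 1$, while sort \ref{E2} produces the unique diagonal $\qbar u$ satisfying $\qbar u \circ \varepsilon_{b_{1}} = 1$ and $u = \varepsilon_{b_{2}} \circ fqu \circ \qbar u$ required by Definition~\ref{definition:twisted-coreflection}. Functoriality of $\beta$ then yields functoriality of $q$, naturality of $\varepsilon$, and the uniqueness of $\qbar u$.

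The main obstacle I anticipate is the remaining coreflection identity $q \cdot \varepsilon = 1_{q}$, equivalent to showing that $\beta(\varepsilon_{b})$ is of sort \ref{E1} for every $b$. This cannot be extracted from $\beta f = Lf$ and $Rf \beta = 1_{B}$ alone, and is precisely where the coassociativity condition $E(1_{A}, \beta) \circ \beta = \Delta_{f} \circ \beta$ of Lemma~\ref{lemma:coalgebras} must intervene: I would trace both diagonals of this square through Construction~\ref{construction:Ef}, using the explicit description of $\Delta_{f}$ from Notation~\ref{notation:co-multiplication}, and verify that their equality in $ELf$ forces each $\beta(\varepsilon_{b})$ to lie in sort \ref{E1}, so that $q\varepsilon_{b} = 1_{qb}$. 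Conversely, starting from a twisted coreflection and running the forward map, the same analysis shows that the coassociativity equation holds automatically. Once both directions are in place, the two constructions are manifestly mutually inverse on vertical morphisms, and double functoriality of the inverse follows from the thinness of both double categories, completing the proof.
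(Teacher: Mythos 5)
Your proposal follows essentially the same route as the paper's (sketched) proof: both reduce to Lemma~\ref{lemma:coalgebras}, construct the forward map by lifting against the free delta lens $(Rf,\Phi f)$, and recover $q$, $\varepsilon$, and $\qbar u$ from $\beta$ via the sort \ref{E1}/\ref{E2} decomposition of Construction~\ref{construction:Ef}, with the coassociativity equation $E(1_{A},\beta)\circ\beta = \Delta_{f}\circ\beta$ supplying exactly the identity $q\cdot\varepsilon = 1_{q}$. The only caveat is that thinness alone gives faithfulness, not fullness, on cells, so the cell-level correspondence still requires a (routine) check that the two existence conditions agree --- a detail the paper's sketch likewise omits.
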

\begin{proof}[(Sketch)]
We first unpack the action the double functor $\TWCOREF \rightarrow \LLP(\LENS)$ in terms of Lemma~\ref{lemma:coalgebras}, and then describe the inverse on vertical morphisms, omitting the details that this extends to a double functor $\LLP(\LENS) \rightarrow \TWCOREF$.

Given a twisted coreflection $(f \dashv q, \varepsilon) \colon A \nrightarrow B$, applying Proposition~\ref{proposition:cofree-twisted-coreflection} to the morphism $(1_{A}, 1_{B}) \colon f \rightarrow f$ in $\Cat^{\two}$ yields the following factorisation through the cofree twisted coreflection.  
\begin{equation*}
    \begin{tikzcd}
        A 
        \arrow[r, equal]
        \arrow[d, "f"']
        & 
        A 
        \arrow[r, equal]
        \arrow[d, "Lf"']
        & 
        A 
        \arrow[d, "f"]
        \\
        B 
        \arrow[r, dashed, "j"']
        \arrow[rounded corners, to path={-- ([yshift=-2ex]\tikztostart.south) -- node[below]{\scriptsize$1_{B}$}  ([yshift=-2ex]\tikztotarget.south) -- (\tikztotarget)}]{rr}
        & 
        Ef 
        \arrow[r, "Rf"']
        & 
        B
    \end{tikzcd}
\end{equation*}
Therefore the image of $(f \dashv q, \varepsilon)$ under the canonical double functor $\TWCOREF \rightarrow \LLP(\LENS)$ is determined by the pair $(f, j)$ by Lemma~\ref{lemma:coalgebras}.

Given a pair of functors $(f \colon A \rightarrow B, \beta \colon B \rightarrow Ef)$ that satisfy the commutative diagrams in Lemma~\ref{lemma:coalgebras}, we construct a twisted coreflection. 
The functor $\beta \colon B \rightarrow Ef$ is given by $\beta(x) = (qx, \varepsilon_{x} \colon fqx \rightarrow x)$ on objects. 
Taking into account the axiom $Rf \circ \beta = 1_{B}$, the action of $\beta$ on a morphism $u \colon x \rightarrow y$ is either:
\begin{enumerate}
    \item the morphism $u$ such that $u \circ \varepsilon_{x} = \varepsilon_{y}$, that is, a morphism of sort \ref{E1};
    \item a pair of morphisms $\qbar u \colon x \rightarrow fqx$ in $B$ and $qu \colon qx \rightarrow qy$ in $A$ such that $qu \neq1$, $\qbar u \circ \varepsilon_{x} = 1$ and $u = \varepsilon_{y} \circ fqu \circ \qbar u$, that is, a morphism of sort \ref{E2}.
\end{enumerate}
The above data defines a functor $q \colon B \rightarrow A$ and a natural transformation $\varepsilon \colon fq \Rightarrow 1_{B}$. 
The equation $\beta \circ f = Lf$ implies that $qf = 1_{A}$ and $\varepsilon \cdot f = 1_{f}$, and the equation $\Delta_{f} \circ \beta = E(1_{A}, \beta) \circ \beta$ implies that $q \cdot \varepsilon = 1_{q}$.
Therefore, we have a twisted coreflection $(f \dashv q, \varepsilon) \colon A \nrightarrow B$ as required.  
\end{proof}

The following result was first demonstrated in previous work \cite{Clarke2023}, however we include a concise proof below for completeness. 

\begin{proposition}
\label{proposition:lens-RLP-twcoref}
The canonical double functor $\LENS \rightarrow \RLP(\TWCOREF)$ is invertible. 
\end{proposition}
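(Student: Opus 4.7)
The plan is to mirror the strategy of Proposition~\ref{proposition:twcoref-LLP-lens}, exploiting the presentation of vertical morphisms in $\RLP(\TWCOREF)$ given by Lemma~\ref{lemma:algebras}. First, I would unpack the canonical double functor on vertical morphisms: given a delta lens $(g, \psi) \colon C \nrightarrow D$, applying Proposition~\ref{proposition:free-delta-lens} to the identity morphism $(1_{C}, 1_{D}) \colon g \to g$ in $\Cat^{\two}$ produces a unique functor $\alpha \colon Eg \to C$ with $\alpha \circ Lg = 1_{C}$ and $g \circ \alpha = Rg$, which is precisely the data of the corresponding vertical morphism of $\RLP(\TWCOREF)$.

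For the inverse on vertical morphisms, given $(g, \alpha)$ satisfying the axioms of Lemma~\ref{lemma:algebras}, I would use Construction~\ref{construction:Ef} to extract a choice of lifts. For each $c \in C$ and $u \colon gc \to d$ in $D$, the arrow $u \colon (c, 1_{gc}) \to (c, u)$ in $Eg$ is of sort~\ref{E1}; setting $\varphi(c, u) := \alpha(u \colon (c, 1_{gc}) \to (c, u))$ yields a candidate lift $c \to \alpha(c, u)$ in $C$. Axiom~\ref{DL1} follows from $g\alpha = Rg$ together with the action of $Rg$ on sort-\ref{E1} morphisms, and~\ref{DL2} is immediate from functoriality of $\alpha$ applied to an identity morphism.

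The main obstacle is verifying the composition axiom~\ref{DL3}. Given composable $u \colon gc \to d$ and $v \colon d \to e$, functoriality of $\alpha$ applied to the sort-\ref{E1} factorisation $v \circ u = v \circ u$ in $Eg$ reduces~\ref{DL3} to the key identity $\alpha(v \colon (c, u) \to (c, v \circ u)) = \varphi(\alpha(c, u), v)$. This should follow from the multiplication equation $\alpha \circ \mu_{g} = \alpha \circ E(\alpha, 1_{D})$ of Lemma~\ref{lemma:algebras}, evaluated on the sort-\ref{E1} morphism $v \colon ((c, u), 1_{d}) \to ((c, u), v)$ in $ERg$: the former composite traces through $\mu_{g} \circ LRg = 1_{Eg}$ to $\alpha(v \colon (c, u) \to (c, v \circ u))$, while the latter traces through $E(\alpha, 1_{D}) \circ LRg = Lg \circ \alpha$ to $\alpha(v \colon (\alpha(c, u), 1) \to (\alpha(c, u), v)) = \varphi(\alpha(c, u), v)$.

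Finally, I would verify that the two assignments are mutually inverse on vertical morphisms and extend the construction to cells---noting that a cell $(h, k)$ in $\RLP(\TWCOREF)$ satisfies $h \circ \alpha = \alpha' \circ E(h, k)$ if and only if the underlying commutative square preserves the chosen lifts $\varphi(c, u) = \alpha(u)$---promoting it to a concrete double functor $\RLP(\TWCOREF) \to \LENS$ inverse to the canonical one.
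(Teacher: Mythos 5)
Your proposal is correct and follows essentially the same route as the paper: unpack the canonical functor via Proposition~\ref{proposition:free-delta-lens} applied to the identity square, then recover a delta lens from a pair $(g,\alpha)$ satisfying Lemma~\ref{lemma:algebras} by reading off $\varphi(c,u)$ as the image under $\alpha$ of the sort-\ref{E1} morphism $u\colon (c,1_{gc})\to(c,u)$ of Construction~\ref{construction:Ef}, with \ref{DL1}--\ref{DL3} extracted from the unit, counit-compatibility, and multiplication equations respectively. Your tracing of \ref{DL3} through $\mu_{g}$ and $E(\alpha,1_{D})$ actually supplies more detail than the paper does.
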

\begin{proof}
We first unpack the action of the double functor $\LENS \rightarrow \RLP(\TWCOREF)$ in terms of Lemma~\ref{lemma:algebras}, and then describe the inverse on vertical morphisms, omitting the details that this extends to a double functor $\RLP(\TWCOREF) \rightarrow \LENS$. 

Given a delta lens $(f, \varphi) \colon A \nrightarrow B$, applying Proposition~\ref{proposition:free-delta-lens} to the identity morphism $(1_{A}, 1_{B}) \colon f \rightarrow f$ in $\Cat^{\two}$ yields the following factorisation through the free delta lens. 
\begin{equation*}
    \begin{tikzcd}
        A 
        \arrow[r, "Lf"]
        \arrow[d, "f"']
        \arrow[rounded corners, to path={-- ([yshift=2ex]\tikztostart.north) -- node[above]{\scriptsize$1_{A}$}  ([yshift=2ex]\tikztotarget.north) -- (\tikztotarget)}]{rr}
        & 
        Ef 
        \arrow[d, "Rf"]
        \arrow[r, dashed, "j"]
        & 
        A
        \arrow[d, "f"]
        \\
        B 
        \arrow[r, equal]
        & 
        B
        \arrow[r, equal]
        & 
        B
    \end{tikzcd}
\end{equation*}
Therefore the image of $(f, \varphi)$ under the canonical double functor $\LENS \rightarrow \RLP(\TWCOREF)$ is determined by the pair $(f, j)$ by Lemma~\ref{lemma:algebras}. 

Given a pair of functors $(f \colon A \rightarrow B, \alpha \colon Ef \rightarrow A)$ that satisfy the commutative diagrams in Lemma~\ref{lemma:algebras}, we construct a delta lens. 
Given a morphism $u \colon (a, 1_{fa}) \rightarrow (a, u)$ in $Ef$ of sort \ref{E1}, we define the image under $\alpha \colon Ef \rightarrow A$ to be a morphism $\varphi(a, u) \colon a \rightarrow a'$. 
We have that $\dom(\varphi(a, u)) = a$ and $\varphi(a, 1_{fa}) = 1_{a}$ by the equation $\alpha \circ Lf = 1_{A}$. 
The equation $f \circ \alpha = Rf$ implies that $f\varphi(a, u) = u$, and the equation $\alpha \circ \mu_{f} = \alpha \circ E(\alpha, 1_{B})$ implies that $\varphi(a, v \circ u) = \varphi(a', v) \circ \varphi(a, u)$. 
Therefore, we have a delta lens $(f, \varphi) \colon A \nrightarrow B$ as required.
\end{proof}

Together, Proposition~\ref{proposition:twcoref-LLP-lens} and Proposition~\ref{proposition:lens-RLP-twcoref} establish that the $(\TWCOREF, \LENS)$-lifting operation, defined in Section~\ref{subsec:lifting-operation-twcoref-lens}, satisfies the axiom of lifting for an algebraic weak factorisation system. 

\subsection{The main theorem and corollaries}
\label{subsec:main-theorem}

In this subsection, we state the main theorem of the paper and a collect some related results. 

\begin{theorem}
\label{theorem:main}
There is an algebraic weak factorisation system on $\Cat$ given by the cospan 
\begin{equation*}
    \begin{tikzcd}
            \TWCOREF 
            \arrow[r, "U"]
            &
            \SQ(\Cat)
            &
            \LENS
            \arrow[l, "V"']
        \end{tikzcd}
\end{equation*}
together with lifts of twisted coreflections against delta lenses constructed in Proposition~\ref{proposition:lifting-square}.
Furthermore, this \textsc{awfs} is cofibrantly generated by a small double category. 
\end{theorem}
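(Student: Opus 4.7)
The plan is essentially to harvest the results assembled throughout Section~\ref{sec:awfs-twisted-coreflections-delta-lenses}, since each of the constituent pieces of the definition of an \textsc{awfs} (in the sense of Section~\ref{subsec:awfs}) has already been verified. First I would observe that the double functors $U \colon \TWCOREF \to \SQ(\Cat)$ and $V \colon \LENS \to \SQ(\Cat)$ constructed in Sections~\ref{sec:twisted-coreflections} and \ref{sec:delta-lenses} are both concrete by construction. Theorem~\ref{theorem:lifting-operation} then supplies the $(\TWCOREF, \LENS)$-lifting operation $\varphi$ whose components are the canonical lifts of Proposition~\ref{proposition:lifting-square}, so we have a lifting structure $(\TWCOREF, \varphi, \LENS)$ over $\SQ(\Cat)$.

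Next I would invoke Theorem~\ref{theorem:factorisation} to establish the axiom of factorisation: the explicit factorisation constructed in Proposition~\ref{proposition:factorisation} yields, via Proposition~\ref{proposition:free-delta-lens} and Proposition~\ref{proposition:cofree-twisted-coreflection}, a pair of universal arrows, one from $f$ to $V_{1}$ and one from $U_{1}$ to $f$, so the axiom of factorisation in the sense of Section~\ref{subsec:awfs} is satisfied for every $f \in \Cat^{\two}$. The axiom of lifting is precisely the content of Proposition~\ref{proposition:twcoref-LLP-lens} and Proposition~\ref{proposition:lens-RLP-twcoref}, which together state that the induced double functors $\varphi_{l} \colon \TWCOREF \to \LLP(\LENS)$ and $\varphi_{r} \colon \LENS \to \RLP(\TWCOREF)$ are invertible. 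Combining these three observations gives the first claim of the theorem.

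For the cofibrant generation assertion, the strategy is to reuse the small double category $\JJ_{\lens}$ from the proof of Theorem~\ref{theorem:cofibrant-generation}. Composing the isomorphism $\LENS \cong \RLP(\JJ_{\lens})$ with the axiom-of-lifting isomorphisms $\LENS \cong \RLP(\TWCOREF)$ and $\TWCOREF \cong \LLP(\LENS)$ from Proposition~\ref{proposition:twcoref-LLP-lens} and Proposition~\ref{proposition:lens-RLP-twcoref} yields
\[
    \TWCOREF \cong \LLP(\RLP(\JJ_{\lens}))
    \qquad \text{and} \qquad
    \LENS \cong \RLP(\JJ_{\lens}),
\]
which is exactly the presentation of the \textsc{awfs} as cofibrantly generated by $\JJ_{\lens}$ in the sense of the final remark of Section~\ref{subsec:cofibrant-generation} (see also \cite{BourkeGarner2016a, Bourke2023}). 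I would also note that an appeal to local presentability of $\Cat$ is not needed here, since the generating \textsc{awfs} has already been produced by hand; the cofibrant generation is simply read off the explicit isomorphisms.

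Since every non-trivial step has been carried out in the preceding subsections, there is no real obstacle; the only thing to take care of is bookkeeping: ensuring that the lifting structure produced in Theorem~\ref{theorem:lifting-operation} agrees, under $\varphi_{l}$ and $\varphi_{r}$, with the canonical lifting structures on $\LLP(\LENS)$ and $\RLP(\TWCOREF)$ used to identify $\JJ_{\lens}$-generation, so that the two perspectives on the \textsc{awfs} really do coincide. This is immediate from the fact that $\varphi_{l}$ and $\varphi_{r}$ are defined to send a vertical morphism to itself paired with the restriction of $\varphi$.
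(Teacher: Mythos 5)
Your proposal is correct and matches the paper's proof essentially verbatim: both assemble Theorem~\ref{theorem:lifting-operation} for the lifting structure, Theorem~\ref{theorem:factorisation} for the axiom of factorisation, and Proposition~\ref{proposition:twcoref-LLP-lens} together with Proposition~\ref{proposition:lens-RLP-twcoref} for the axiom of lifting, then deduce cofibrant generation from the isomorphism $\LENS \cong \RLP(\JJ_{\lens})$ of Theorem~\ref{theorem:cofibrant-generation}. Your side remark that local presentability of $\Cat$ is dispensable here is a fair observation --- the paper does invoke it at that point, but since the factorisation axiom has been verified by hand rather than via the algebraic small object argument, reading off cofibrant generation from the chain of isomorphisms $\TWCOREF \cong \LLP(\LENS) \cong \LLP(\RLP(\JJ_{\lens}))$ over $\SQ(\Cat)$ is indeed sufficient under the paper's definition.
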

\begin{proof}
The chosen lifts assemble into a well-defined $(\TWCOREF, \LENS)$-lifting operation by Theorem~\ref{theorem:lifting-operation}. 
It satisfies the \emph{axiom of lifting} by Proposition~\ref{proposition:twcoref-LLP-lens} and Proposition~\ref{proposition:lens-RLP-twcoref}, and the \emph{axiom of factorisation} by Theorem~\ref{theorem:factorisation}. 
Since $\Cat$ is locally presentable, it follows by Theorem~\ref{theorem:cofibrant-generation} that this \textsc{awfs} is cofibrantly generated by the small double category $\JJ_{\lens}$. 
\end{proof}

Recall that the comprehensive factorisation system \cite{StreetWalters1973} induces an \textsc{awfs} on $\Cat$ given by double categories $\IFUN$ and $\DOPF$ whose vertical morphisms are initial functors and discrete opfibrations, respectively. 
\begin{equation*}
    \begin{tikzcd}
            \IFUN 
            \arrow[r, "U"]
            &
            \SQ(\Cat)
            &
            \DOPF
            \arrow[l, "V"']
        \end{tikzcd}
\end{equation*}
We also have the closely-related \textsc{awfs} on $\Cat$, detailed by  Bourke \cite[Example~4(ii)]{Bourke2023}, given by the double categories $\COREF$ and $\SOPF$ whose vertical morphisms are split coreflections and split opfibrations, respectively.
\begin{equation*}
    \begin{tikzcd}
            \COREF 
            \arrow[r, "U"]
            &
            \SQ(\Cat)
            &
            \SOPF
            \arrow[l, "V"']
        \end{tikzcd}
\end{equation*}

\begin{proposition}
The following inclusions of double categories determine morphisms of algebraic weak factorisation systems on $\Cat$. 
\begin{equation*}
    \begin{tikzcd}[row sep = small]
        \TWCOREF 
        \arrow[rd]
        \arrow[d, hook]
        & 
        &
        \LENS 
        \arrow[ld]
        \\
        \COREF 
        \arrow[r]
        \arrow[d, hook]
        & 
        \SQ(\Cat)
        & 
        \SOPF 
        \arrow[l]
        \arrow[u, hook]
        \\
        \IFUN 
        \arrow[ru]
        & 
        & 
        \DOPF
        \arrow[u, hook]
        \arrow[ul]
    \end{tikzcd}
\end{equation*}
\end{proposition}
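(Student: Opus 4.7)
The plan is to verify, for each of the two morphisms of AWFSs depicted, the two defining conditions from Section~\ref{subsec:awfs}: commutativity of the underlying diagram of double functors, and compatibility of the lifting operations in the form $\varphi_{j, Gk} = \varphi'_{Fj, k}$. Commutativity of each triangle is immediate, since the diagonal arrows into $\SQ(\Cat)$ are the forgetful double functors extracting the underlying functor of a structured morphism, and inclusions preserve underlying functors.

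For the bottom morphism $(\COREF, \SOPF) \to (\IFUN, \DOPF)$, the target AWFS is induced by the comprehensive orthogonal factorisation system (Lemma~\ref{lemma:comprehensive-factorisation}), so any lift of an initial functor against a discrete opfibration is the unique diagonal filler. Since the underlying functor of a split coreflection is initial (Section~\ref{subsec:split-coreflections}) and every discrete opfibration admits a unique split opfibration structure (Example~\ref{example:discrete-opfibration}), the lift produced by the $(\COREF, \SOPF)$ lifting operation must coincide with the unique comprehensive lift.

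For the top morphism $(\TWCOREF, \LENS) \to (\COREF, \SOPF)$, I would argue by uniqueness. Given a twisted coreflection $(f \dashv q, \varepsilon) \colon A \nrightarrow B$, a split opfibration $(g, \psi) \colon C \nrightarrow D$, and a commutative square $(h, k) \colon f \to g$, both lifting operations produce a functor $j \colon B \to C$ with $jf = h$ and $gj = k$. The $(\COREF, \SOPF)$ lift is uniquely characterised by the additional equation $j\varepsilon_b = \psi(hqb, k\varepsilon_b)$ for all $b \in B$: on objects this forces $jb = \cod \psi(hqb, k\varepsilon_b)$, and opcartesianness together with $gju = ku$ uniquely determines $j$ on morphisms via the naturality square for $\varepsilon$. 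To verify that the lift of Proposition~\ref{proposition:lifting-square} satisfies this equation, apply Figure~\ref{figure:qu-equals-id} to $u = \varepsilon_b \colon fqb \to b$, which satisfies $q\varepsilon_b = 1$ by the triangle identity, yielding $j\varepsilon_b = \psi(j(fqb), k\varepsilon_b) = \psi(hqb, k\varepsilon_b)$ using $jf = h$. The main obstacle is precisely this verification for the top morphism, since one must carefully reconcile the lift constructed via pushouts and comprehensive factorisation in Proposition~\ref{proposition:lifting-square} with the opcartesian description of the $(\COREF, \SOPF)$ lift; the bottom morphism is considerably easier because its target is an OFS.
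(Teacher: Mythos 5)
Your proposal is correct and takes essentially the same approach as the paper, whose own proof is only a two-bullet sketch asserting precisely the two lift-comparisons you verify. Your uniqueness arguments --- orthogonality of the comprehensive factorisation system for the bottom triangle, and opcartesianness together with the $qu = 1$ case of the explicit lift for the top --- in fact supply more detail than the paper does.
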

\begin{proof}[(Sketch)]
Given a commutative square $kf = gh$ of functors, we can show that: 
\begin{itemize}[leftmargin=*]
    \item if $(f \dashv q, \varepsilon)$ is a twisted coreflection and $(g, \psi)$ is split opfibration, then the lift of the underlying split coreflection of $(f \dashv q, \varepsilon)$ against the split opfibration is equal to the lift of the twisted coreflection against the underlying delta lens of $(f, \varphi)$;
    \item if $(f \dashv q, \varepsilon)$ is a split coreflection and $g$ is a discrete opfibration, then the lift of the underlying initial functor $f$ against the discrete opfibration is equal to the lift of the split coreflection against the split opfibration induced by $g$.
\end{itemize}
\end{proof}

Finally, we may see that twisted coreflections are $L$-coalgebras as claimed.

\begin{corollary}
\label{corollary:comonadicity}
The functor $U_{1} \colon \TwCoref \rightarrow \Cat^{\two}$ which assigns to each twisted coreflection its underlying left adjoint, is comonadic. 
\end{corollary}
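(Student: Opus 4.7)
The plan is to identify $U_{1}$ with the canonical forgetful functor from the category of coalgebras for a suitable comonad on $\Cat^{\two}$, whence comonadicity is immediate by definition of the Eilenberg--Moore construction.

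First, by Theorem~\ref{theorem:main}, the cospan $\TWCOREF \to \SQ(\Cat) \leftarrow \LENS$ underlies an algebraic weak factorisation system. As recorded in the remark after Lemma~\ref{lemma:algebras}, such an \textsc{awfs} determines a comonad $L$ on $\Cat^{\two}$: on objects, $L$ sends a functor $f \colon A \to B$ to the cofree twisted coreflection $Lf \colon A \to Ef$ from Proposition~\ref{proposition:factorisation}, with counit $(1_{A}, Rf) \colon Lf \to f$ obtained from the universal property of Proposition~\ref{proposition:cofree-twisted-coreflection}, and comultiplication arising from the same universal property applied to $\Delta_{f}$ as in Notation~\ref{notation:co-multiplication}. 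By construction there is an isomorphism of categories $L\text{-}\mathrm{Coalg} \cong \llp(\Lens)$ over $\Cat^{\two}$, identifying an $L$-coalgebra on $f$ with a pair $(f, \beta)$ as in Lemma~\ref{lemma:coalgebras}.

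Next, Proposition~\ref{proposition:twcoref-LLP-lens} provides an isomorphism of double categories $\TWCOREF \cong \LLP(\LENS)$ induced by the canonical lifting structure. Restricting to vertical morphisms gives an isomorphism $\TwCoref \cong \llp(\Lens)$ of categories over $\Cat^{\two}$, since the underlying functor of a twisted coreflection is sent to itself under the construction of Proposition~\ref{proposition:twcoref-LLP-lens}. Composing with the previous identification yields an isomorphism $\TwCoref \cong L\text{-}\mathrm{Coalg}$ commuting with the forgetful functors to $\Cat^{\two}$.

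Finally, the forgetful functor $L\text{-}\mathrm{Coalg} \to \Cat^{\two}$ is comonadic: it is left adjoint to the cofree coalgebra functor, and the induced comonad on $\Cat^{\two}$ is $L$ itself, so the comparison functor is the identity. Transporting along the isomorphism established above shows that $U_{1} \colon \TwCoref \to \Cat^{\two}$ is comonadic, generating the comonad $L$. No substantive obstacle arises; the entire content of the corollary is a reformulation of Proposition~\ref{proposition:twcoref-LLP-lens} together with the standard passage from an \textsc{awfs} to its associated comonad.
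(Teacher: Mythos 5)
Your proposal is correct and follows essentially the same route as the paper: the paper's proof simply cites Bourke's Theorem~14 (which packages the standard fact that the left part of a lifting \textsc{awfs} is comonadic over $\C^{\two}$) together with Proposition~\ref{proposition:twcoref-LLP-lens}, and your argument is an unpacking of that citation via the axiom of factorisation, Lemma~\ref{lemma:coalgebras}, and the identification $\TwCoref \cong \llp(\Lens) \cong L\text{-}\mathrm{Coalg}$ over $\Cat^{\two}$. The only points left implicit (as they are in the paper's own remark) are the verification that the cells of $\LLP(\LENS)$ match morphisms of $L$-coalgebras and that $(L, \epsilon, \Delta)$ satisfies the comonad axioms, both of which follow from the universal property of the cofree twisted coreflection.
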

\begin{proof}
Follows from Bourke \cite[Theorem~14]{Bourke2023} and Proposition~\ref{proposition:twcoref-LLP-lens}.
\end{proof}

\section{Directions for future work}
\label{sec:future-work}

\subsection*{The free split opfibration on a delta lens}
In Corollary~\ref{corollary:split-to-twisted}, we showed that the fully faithful inclusion of twisted coreflections into split coreflections admits a right adjoint. 
This right adjoint is easy to construct: given a split coreflection $(f \dashv q, \varepsilon) \colon A \nrightarrow B$, one may first pullback along $\iota_{A} \colon A_{0} \rightarrow A$ to obtain a split coreflection $A_{0} \nrightarrow \sum_{a \in A_{0}} q^{-1}\{a \}$ and then pushforward along $\iota_{A}$ to obtain the desired twisted coreflection. 

Similarly, we would like to show that the fully faithful inclusion of split opfibrations into delta lenses admits a left adjoint. 
Since the functor $\Lens \rightarrow \Cat^{\two}$ is monadic, and $\SOpf$ has reflexive coequalisers, the existence of a left adjoint $\Cat^{\two} \rightarrow \SOpf$ implies the existence of a left adjoint $\Lens \rightarrow \SOpf$, as below, by the adjoint triangle theorem of Dubuc~\cite{Dubuc1968}.
\begin{equation*}
    \begin{tikzcd}[column sep = large, row sep = large]
        & 
        \Lens 
        \arrow[d, shift right = 2.5]
        \arrow[d, phantom, "\vdash"]
        \arrow[ld, bend right = 35, dashed, "?"']
        \arrow[ld, phantom, shift right = 3, "\dashv"{rotate = -55}]
        \\
        \SOpf 
        \arrow[ru, hook]
        \arrow[r, shift left = 2.5]
        \arrow[r, phantom, "\dashv"{rotate=90}]
        & 
        \Cat^{\two}
        \arrow[u, shift right = 2.5]
        \arrow[l, shift left = 2.5]
    \end{tikzcd}
\end{equation*}
Although there is a formula for computing the left adjoint $\Lens \rightarrow \SOpf$, finding a simple description in the spirit of Corollary~\ref{corollary:split-to-twisted} is ongoing work.
Intuitively, constructing the free split opfibration on a delta lens $(f, \varphi) \colon A \nrightarrow B$ should not change the objects of $A$, but should modify the morphisms in the fibres of $f$ to make the chosen lifts $\varphi(a, u)$ opcartesian. 

A potential application of this result is to the theory of \emph{Schreier split epimorphisms} between monoids \cite{BournMartinsFerreiraMontoliSobral2014, MartinsFerreiraMontoliSobral2013}. 
Restricting to categories with a single object, delta lenses and split opfibrations correspond to split epimorphisms and Schreier split epimorphisms, respectively. 
We conjecture that constructing the free split opfibration on a delta lens restricts to constructing the free Schreier split epimorphism on a split epi between monoids. 

\subsection*{Algebraic model categories} Let $(\C, \mathcal{W})$ denote a complete and cocomplete category $\C$ equipped with a class of morphisms $\mathcal{W}$ satisfying the $2$-out-of-$3$ property. 
Riehl \cite{Riehl2011} defines an \emph{algebraic model structure} on $(\C, \mathcal{W})$ consists of a morphism $(F, G) \colon (\LL, \RR) \rightarrow (\LL', \RR')$ of algebraic weak factorisation systems, such that the underlying weak factorisation systems form a model structure on $\C$ with weak equivalences $\mathcal{W}$. 
Is there a class of weak equivalences $\mathcal{W}$ such that the morphism $(\TWCOREF, \LENS) \rightarrow (\COREF, \SOPF)$ determines an algebraic model structure on $\Cat$? 

\subsection*{Relationship with reflective factorisation systems}
The key tools for constructing the \textsc{awfs} of twisted coreflections and delta lenses was the discrete category comonad and the comprehensive factorisation system $\Cat$. 
In previous work \cite{Clarke2023}, we emphasised that one may construct a similar \textsc{awfs} starting with a category with sufficient pushouts, and equipped with an \textsc{ofs} (or an \textsc{awfs}) and an idempotent comonad which preserves certain pushouts. 
One may draw parallels with \emph{reflective factorisation systems} \cite{CassidyHebertKelly1985} which can be constructed from a category equipped with an idempotent monad which preserves certain pullbacks.
A detailed study of this relationship awaits further work. 


\section*{Acknowledgements}
The author would like to thank the audiences at Université Paris Cité, Tallinn University of Technology, and Université catholique de Louvain for their feedback and questions on early versions of this work. 
Thank you also to John Bourke for helpful discussions, and to Matthew Di Meglio and Noam Zeilberger for suggestions that improved this paper. 
Finally, thank you to the anonymous reviewer for their useful feedback.

\end{document}